\DeclareMathOperator*{\argmin}{arg\,min}
\DeclareMathOperator*{\argmax}{arg\,max}
\newcommand{\R}{\mathbb{R}}
\newcommand{\F}{\mathcal{F}}
\theoremstyle{remark}
\gdef\listctr{list\romannumeral\the\@listdepth}\expandafter
\theoremstyle{definition}
\newtheorem{theorem}{Theorem}
\newtheorem{proposition}{Proposition}
\newtheorem{lemma}{Lemma}
\newtheorem{assumption}{Assumption}
\newtheorem{definition}{Definition}
\title{SMOP: Stochastic trust region method for multi-objective problems}
\author{Nata\v{s}a Kreji\'c\footnote{Department of Mathematics and Informatics, Faculty of Sciences, University of Novi Sad, Trg Dositeja Obradovi\' ca 4, 21000 Novi Sad, Serbia. e-mail: \texttt{natasak@uns.ac.rs}}, Nata\v{s}a Krklec Jerinki\'c \footnote{Department of Mathematics and Informatics, Faculty of Sciences, University of Novi Sad, Trg Dositeja Obradovi\' ca 4, 21000 Novi Sad, Serbia. e-mail: \texttt{natasa.krklec@dmi.uns.ac.rs}},
Luka Rute\v{s}i\'c \footnote{Department of Mathematics and Informatics, Faculty of Sciences, University of Novi Sad, Trg Dositeja Obradovi\' ca 4, 21000 Novi Sad, Serbia. e-mail: \texttt{luka.rutesic@dmi.uns.ac.rs} } \footnote{Corresponding author} }
\begin{document}
\maketitle
\begin{abstract}
 The problem we consider is a multi-objective optimization problem, in which the goal is to find an optimal value of a vector function representing various criteria. The aim of this work is to develop an algorithm which utilizes the trust region framework with probabilistic model functions, able to cope with noisy problems, using inaccurate functions and gradients.  {\color{red} The key novelty is approximation of each function in the multiobjective problem with probabilistically fully linear model which yields the composite model defined by max operator as a satisfactory approximation for the nonsmooth scalarized objective function. }We prove the almost sure convergence of the proposed algorithm to a Pareto critical point. 
 Numerical  results demonstrate effectiveness of the probabilistic trust region by comparing it to competitive stochastic multi-objective solvers. The application in supervised machine learning is showcased by training non discriminatory Logistic Regression models on different size data groups. Additionally, we use several test examples with irregularly shaped fronts to exhibit the efficiency of the algorithm.\end{abstract}
 {\bf{Key words:}} 
  Multi-objective optimization,  Pareto-optimal points, Probabilistically fully linear models, Trust-region method, Almost sure convergence. 
\section{Introduction}
Multi-objective optimization problems arise in many real-world applications, such as finance, scientific computing, social sciences, engineering, and beyond. These problems are characterized by the need to simultaneously optimize multiple, often conflicting objectives, which significantly complicates the decision making process. Whether {\color{red} one is } maximizing efficiency while minimizing computational cost or minimizing risk while maximizing income, identifying the optimal trade-offs is far from straightforward. The complexity comes from the competing nature of the objectives, where improving one criterion comes at the expense of other. The {\color{red} considered} problem can formally be stated as 
{\color{red} \begin{equation}\label{mop}
\min_{x} f(x)=\min_{x \in \R^n} (f_1(x),...,f_q(x))^T
\end{equation}
where $f:\R^n\rightarrow \R^q$. The main goal of multi-objective optimization is to identify the set of Pareto optimal points. A locally Pareto optimal point is a point such that there exists a neighborhood around it in which no other point improves all objective function values simultaneously, see \cite{FS},  \cite{TOMOP}. If the point can not be improved on the entire domain, the point is globally Pareto optimal. When extending this concept to global solutions, the Pareto front is defined as the set of nondominated objective values corresponding to Pareto optimal points. The algorithms for solving problem \eqref{mop} are designed to find a broader set of Pareto critical points; points for which no common descent direction exists that improves all objectives simultaneously. By finding Pareto critical points, it is possible by localize the stationary form of the Pareto front}, see \cite{CMV},\cite{LV}. The insight into the structure of the entire set of solutions can be crucial in the decision making process, hence it is important for the model to be able to approximate the front.  

Trust region methods for solving this kind of problems work within the standard trust region framework, building a model for each function $ f_i $, generating a direction by solving a multi-model optimization problem and performing the acceptance check as in the classical one dimensional case, see \cite{VOS}. Therein it is shown  that the method converges to a Pareto critical point  under standard assumptions. The convergence towards a stationary point is a common main result of papers dealing with multi-objective problems. The complexity of the problem greatly increases if the functions involved are costly. Computing efficiency and high cost of obtaining exact information play an important role and motivation in opting for the stochastic and derivative free approaches. When creating models within a trust region framework, it is possible to use inexact gradient information. Such derivative free trust region approach can be seen in \cite{TE}. In the mentioned paper, one  criterion  is assumed to be a black box function with a  difficulty to calculate derivative, while other functions and their derivatives can be easily computed. The convergence towards a Pareto critical point is proved. Another version of a derivative free multi-objective trust region approach is discussed in \cite{BP}, where radial basis surrogate models are used. 

It is also possible to approach this problem within a line search framework. Armijo-like condition with the steepest descent and Newton direction is discussed in \cite{SOM}. In \cite{SOM} authors also analyze the projected gradient method for constrained cases. Stochastic multi-gradient multi-criteria approach can be found in \cite{LV}. The authors of \cite{LV}   successfully extend the classical stochastic gradient (SG, see \cite{ROMO}) 
method for single-objective optimization to a multi criteria method, and prove  sublinear convergence for convex and strongly convex functions.

Random models are also frequently used within the trust region framework in the case of a single objective function, i.e., for the case $ q=1. $ A number of approaches are available in literature. Probabilistic trust region method which uses approximate models can be seen in \cite{VS}. It is shown there that with probability one the method converges towards a stationary point, if the models are accurate enough with high probability. 
Trust region method for scalar optimization problems utilizing both approximate functions and gradients can be found in \cite{CH}.  The
analysis therein requires   that the model and function estimates are sufficiently accurate with fixed, sufficiently high probability. These probabilities are predetermined and constant throughout the optimization process  and  almost sure convergence towards a stationary point is proved. Additionally, an adaptive subsampling technique for problems involving functions expressed as finite sums, which are common in applications such as machine learning, is proposed therein. Unlike the traditional subsampling techniques with monotonically increasing size, that method adjusts the size based on the progress. The literature also covers methods specifically designed for optimization of finite sums, which exploit the form through the use of different subsampling strategies, and other various techniques. Some papers in the literature on this topic are \cite{NKSB},\cite{BBN},\cite{BBN2}-\cite{BCHN},\cite{NKNKJ}, \cite{RM},\cite{PROXI}. 

The method we  propose here is based on {\color{red} probabilistically fully linear  models for each function $ f_i $ separately,} as  introduced in \cite{VS} and used later on in \cite{CH,CS}. The concept of full (probabilistic) linearity is extended to vector function in a natural way as explained further on. 

Having a fully linear  model, one has to deal with the fact that at each step of the trust region method we compute the ratio function using approximations of the function values at subsequent steps. Therefore we can not rely on the true model reduction and the decreasing monotonicity. Thus some additional conditions are needed to control the errors. One possibility is to assume that we work with sufficiently small $ \varepsilon_F$ accurate values as done in \cite{CH}. We propose a different assumption here, see ahead Assumption \ref{A1new}, motivated by the applications from machine learning problems. Roughly speaking we are assuming that the approximate gradient $ g_i $ is close enough to the true gradient of the approximate function $ \tilde{f}_i, {\color{red} i=1,\ldots,q}$ which is common in the case of finite sums where one subsamples functional values and takes the approximate gradient as the true gradient of the subsampled function, see \cite{RM}. The assumption also holds if one approximates the gradient by finite differences for example. 

The quality of approximate models is controlled by a probability sequence $ \alpha_k$ which is approaching 1 sufficiently fast. This way one can take advantage of relatively poor model at the beginning of iterative process, hoping to save some computational costs and yet achieve good approximate solution at the end using high quality models. 

Pareto optimal points can be characterized as zeros of the so called marginal functions, see \cite{FS}. This characterization reduces to the usual  first order optimality conditions  (gradient equal to zero) in the case of  $ q=1. $ The concept of marginal function is used in \cite{VOS} to define the trust region method. However, as we work with the approximate functions and gradients, an approximate marginal function is used  together with the  corresponding scalar representation,  see \cite{TE}.

{\color{red}\subsection{Contributions}
We propose a trust region algorithm for solving multiobjective optimization problem. The problem is first transformed into composite optimization problem with $max$ operator yielding a nonsmooth objective function. We proceed by considering random models per function $ f_i, i=1,\ldots,q $. The standard property of these per function models is assumed - $ \alpha$- probabilistic full linearity. Despite the fact that the scalarization function is nonsmooth we prove that the aggregate random model has sufficiently good agreement with the scalarized function under reasonable assumptions. The trust region method is then defined exploiting the random structure in an asymmetric way - the criteria for search direction is slightly weaker as it is based on approximate stationarity measure of an approximate model. On the other hand the acceptance criteria is slightly  stronger than usual in trust region. This asymmetry seems to work well, taking into account randomness in the models and at the same time allowing us to prove theoretically strong result of almost sure convergence.  The problem we analyse in detail is the multiobjective problem with finite sums. Hence the random models are based on subsampling of functions  and  gradients. Numerical results are presented,  for the case of per function random models of the first order. These experiments  demonstrate the advantages of the proposed approach, in partuclar for the case of large dimensions and large data sets.  Full Pareto front is also considered. The proposed method is tested against the state-of-the-art SMG method \cite{LV} and deterministic trust region for multiobjective problem from \cite{VOS}. }

\section{Preliminaries}
{\color{red} The considered problem is  unconstrained multiobjective minimization problem \eqref{mop}}
where $f:\R^n\rightarrow \R^q$ and functions  $ f_j, j=1,...,q$ are smooth. Assuming that the explicit evaluation of these functions and its gradients and Hessians  is unavailable or  too costly, we will rely on approximating them with $\tilde{f}_i(x)$, $g_i(x)$ {\color{red} and $H^i(x)$,} respectively.

For problem \eqref{mop} one can define efficient and weakly efficient solution as follows.  
\begin{definition} \cite[{\color{red}Definition 3.1.2}]{TOMOP}.
    A point $ x^* \in  \mathbb{R}^n $ is called an efficient solution for \eqref{mop} (or Pareto optimal) if there exists no point $ x \in \mathbb{R}^n$  satisfying $ f_i(x) \leq  f_i(x^*) $ for all $ i \in \{1,2,...,q\} $ and $f(x)\neq f(x^*)$.
    A point $ x^* \in \mathbb{R}^n $ is called a weakly efficient solution for \eqref{mop} (or weakly Pareto optimal) if there exists no point $ x \in  \mathbb{R}^n $ satisfying $ f_i(x) < f_i(x^*) $ for all $i \in  \{1,2,...,q\}.$ 
\end{definition}
{\color{red}  A point $x^*$ is Pareto critical if and only if there is no direction along which all objective functions decrease simultaneously. In other words, for every direction $ d \in \mathbb{R}^n$, there exists at least one component function  $f_i$  such that the directional derivative satisfies  $$\langle\nabla f_i(x^*),d\rangle\geq 0. $$ Pareto optimality implies Pareto criticality, however the converse is not necessarily true.} 

{\color{red} A stationarity condition for  \eqref{mop} can be derived exploiting the marginal function  
\begin{equation}
    \label{marginal}
\omega(x)=-\min_{\|d\|\leq 1}\left(\max_{i \in \{1,...,q\}}\langle \nabla f_i(x),d\rangle\right).
\end{equation}
It plays a similar role to that of the norm of the gradient of the objective function for single objective problems. In fact, if $q=1$, one gets  $\omega(x)=\|\nabla f(x)\|$.

Let us define 
$${\cal D}(x)=\argmin_{\|d\| \leq 1} \left(\max_{i \in \{1,...,q\}}\langle \nabla f_i(x),d\rangle\right).$$}
The following  lemma  will be used for further considerations. 
\begin{lemma} \cite[{\color{red}Lemma 3}]{FS}. 
    \label{lmarginal}
    {\color{red} The following statements hold: }
    \begin{itemize}
        \item[a)]$ w(x) \geq 0,$  for every  $  x \in \mathbb{R}^n;$
        \item[b)] If $ x $ is Pareto critical for \eqref{mop} then $ 0 \in {\cal D}(x)$ and $ w(x)=0;$
        \item[c)] If $  x $ is not Pareto critical of \eqref{mop} then $w(x)>0 $ and any $ d \in {\cal D}(x)$ is a descent direction for \eqref{mop};
        \item[d)] The mapping $ x \to w(x)$ is continuous.
    \end{itemize}
\end{lemma}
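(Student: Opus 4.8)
The plan is to treat the four items essentially independently, using only the definition \eqref{marginal}, the directional-derivative characterization of Pareto criticality recalled just above the lemma, and elementary facts about finite maxima of functions that are linear in $d$. Throughout I would abbreviate $\phi(x,d):=\max_{i\in\{1,\dots,q\}}\langle\nabla f_i(x),d\rangle$, so that $\omega(x)=-\min_{\|d\|\le1}\phi(x,d)$. For each fixed $x$ the map $d\mapsto\phi(x,d)$ is continuous and the feasible set $\{d:\|d\|\le1\}$ is compact, so the minimum is attained and $\mathcal{D}(x)\neq\emptyset$; this nonemptiness is used implicitly in b) and c). Item a) is then immediate: evaluating at the feasible point $d=0$ gives $\phi(x,0)=0$, hence $\min_{\|d\|\le1}\phi(x,d)\le0$ and $\omega(x)\ge0$.

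For b), if $x$ is Pareto critical then, by the recalled characterization, for every $d$ there is an index $i$ with $\langle\nabla f_i(x),d\rangle\ge0$, hence $\phi(x,d)\ge0$ for all feasible $d$ and $\min_{\|d\|\le1}\phi(x,d)\ge0$. Together with a) this forces the minimum to equal $0$, attained at $d=0$, so $0\in\mathcal{D}(x)$ and $\omega(x)=0$. For c), if $x$ is not Pareto critical there is $\bar d\neq0$ with $\langle\nabla f_i(x),\bar d\rangle<0$ for all $i$; scaling $\bar d$ into the unit ball preserves all these strict inequalities, so $\phi(x,\bar d)<0$ and $\omega(x)>0$. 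Moreover any $d\in\mathcal{D}(x)$ satisfies $\phi(x,d)=-\omega(x)<0$, i.e.\ $\langle\nabla f_i(x),d\rangle<0$ for every $i$, which is exactly the statement that $d$ is a common descent direction for all components and hence a descent direction for \eqref{mop}.

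For d), the cleanest argument is to note that the discrepancy between $\phi(x,\cdot)$ and $\phi(y,\cdot)$ is bounded uniformly in $d$: for every $d$ with $\|d\|\le1$,
\[
|\phi(x,d)-\phi(y,d)|\le\max_i|\langle\nabla f_i(x)-\nabla f_i(y),d\rangle|\le\max_i\|\nabla f_i(x)-\nabla f_i(y)\|=:\eta(x,y),
\]
using $|\max_i a_i-\max_i b_i|\le\max_i|a_i-b_i|$ and the Cauchy--Schwarz inequality. Since this bound is independent of $d$ and the feasible set is the same for $x$ and $y$, passing to the minimum on both sides gives $|\omega(x)-\omega(y)|\le\eta(x,y)$, and $\eta(x,y)\to0$ as $y\to x$ by continuity of the gradients $\nabla f_i$; continuity of $\omega$ follows. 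Alternatively one may invoke Berge's maximum theorem, as $\phi$ is jointly continuous and the constraint set is a fixed compact set.

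The main obstacle is essentially cosmetic: none of the steps is technically deep, but in d) one must avoid estimating pointwise in $d$ and instead exploit that the error bound $\eta(x,y)$ does not depend on $d$, which is what allows the minimization to pass through the inequality; if the gradients are in fact Lipschitz, the same computation shows $\omega$ is globally Lipschitz with the corresponding constant. The only other point deserving an explicit sentence is the attainment of the minimum defining $\omega$ and $\mathcal{D}(x)$, which rests on compactness of the unit ball together with continuity of $d\mapsto\phi(x,d)$.
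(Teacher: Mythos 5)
Your proof is correct. Note, however, that the paper does not prove this lemma at all: it is stated as a known result with a citation to Fliege and Svaiter \cite{FS}, so there is no in-paper argument to compare against; your reconstruction (evaluation at $d=0$ for a), the directional-derivative characterization of criticality for b) and c), and the uniform-in-$d$ estimate $|\omega(x)-\omega(y)|\leq\max_i\|\nabla f_i(x)-\nabla f_i(y)\|$ for d)) is the standard argument from that reference. The only cosmetic caveat is that the Cauchy--Schwarz step in d) implicitly assumes the Euclidean norm; for the paper's ``arbitrary norm'' one would bound $|\langle u,d\rangle|$ by the dual norm of $u$, which changes nothing since all norms on $\mathbb{R}^n$ are equivalent.
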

{\color{red} One possible}  scalar representation of the multiobjective problem  \eqref{mop}  is 
 \begin{equation} \label{phiorig} \min_{{\color{red}x \in \R^n}}  \phi(x), \; \; \phi(x)=\max_{i\in\{1,...,q\}}f_i(x).\end{equation}
 This problem is not equivalent to problem \eqref{mop}, but every  solution of this scalar problem is  a Pareto {\color{red} critical} point. {\color{red} In order to see that, let us recall that subdifferential of $\phi$ is given by 
 $\partial \phi(x)=co\{\nabla f_i(x)  : i \in I_f(x)\} $, where $I_f(x):=\{i \in \{1,...,q\}: f_i(x)=\phi(x)\}$ and  $co$ denotes the convex hull of the stated vectors. The corresponding stationarity condition is $0 \in \partial \phi(x)$, where $0$ represents vector of zeros of dimension $n$. Thus, a stationarity measure can be defined as 
 $$\omega_{\phi}(x)=-\min_{\|d\|\leq 1}\left(\max_{i \in I_f(x)}\langle \nabla f_i(x),d\rangle\right)$$
 Therefore, it follows that $0 \leq \omega(x)\leq \omega_{\phi}(x)$ for every $x \in \R^n$ and thus $\omega_{\phi}(\tilde{x})=0$ implies  $\omega(\tilde{x})=0$.
 }

{\color{red} Let  $B(x,\delta)$ denote a closed ball centered at $ x $ with  radius $ \delta.$  In deterministic trust region methods, at iteration $k$, $\phi$ is usually approximated locally (on a ball $B(x_k,\delta_k)$, where $\delta_k$  represents trust region radius),  by a quadratic model  
$$m^{true}_k(d)=\max_{i\in\{1,...,q\}} \{ f_i(x_k)+\langle \nabla f_i(x_k),d\rangle \}+\frac{1}{2}\langle d,H^i_{k} d\rangle\},$$ where $H^i_{k}$ approximates the Hessian of function $f_i$ for $ i=1,...,q$. A measure of proximity for the models is defined as follows,
. 
\begin{definition} {\color{red}\cite[Definition 6.1]{CSVDFO} }
Function $m_{k}$ is $(c_f,c_g)$ fully linear (FL) model of function  $h$ on $B(x_k,\delta_k)$ if for every $d\in B(0,\delta_k)$ the following two inequalities hold
\begin{equation} \label{def1a} |h(x_k+d)-m_k(d)|\leq c_f\delta_k^2
\end{equation}
\begin{equation} \label{def1b} \|\nabla h(x_k+d)-\nabla m_k(d)\|\leq c_g\delta_k.
\end{equation}
\end{definition}

    }
{\color{red} Given that we assume that the computation of exact functions and their derivatives is not feasible,  approximate functions are to be used in general. Therefore} we  define the approximate quadratic model for $\tilde{\phi}$ analogously, i.e., {\color{red} 
\begin{equation} \label{agregatni} \tilde{m}_k(d)=\max_{i\in\{1,...,q\}} \tilde{m}_{k,i}(d), \end{equation}}
where 
{\color{red} \begin{equation} \label{novomki} \tilde{m}_{k,i}(d)=\tilde{f}_i(x_k)+\langle g_i(x_k),d\rangle+\frac{1}{2}\langle d,H^i_{k} d\rangle, \; i=1,...,q.\end{equation}
} 
Notice that $\nabla \tilde{m}_{k,i}(0)=g_i(x_k)$ and $\tilde{m}_{k,i}(0)=\tilde{f}_i(x_k)$ for each $i=1,...,q$. 
{\color{red} Furthermore, following \cite{TE}, we consider the approximate marginal function}
\begin{equation}
    \label{amarginal}
    \omega_m(x)=-\min_{\|d\|{\leq}1}\left(\max_{i \in \{1,...,q\}}\langle g_i(x),d\rangle\right)
    \end{equation}
{\color{red} as a stationarity measure of the approximate  multi-objective problem 
\begin{equation*} \label{mopapp}
\min_{x \in \R^n} \tilde{f}(x)=\min_{x\in \R^n} (\tilde{f}_1(x),...,\tilde{f}_q(x))^T.
\end{equation*}
 Analogously to \eqref{phiorig} we denote by $\omega_{\tilde{\phi}}$ stationarity measure of the following scalar problem 
    \begin{equation*} \label{ascalar} 
    \min_{{\color{red}x \in \R^n}} \tilde{\phi}(x), \; \; \tilde{\phi}(x)=\max_{i\in\{1,...,q\}}\tilde{f}_i(x) 
    \end{equation*}
and conclude that  $0 \leq \omega_m(x)\leq \omega_{\tilde{\phi}}(x)$ for every $x \in \R^n$.   These approximate versions are going to be used within the algorithm proposed in the next section, while the convergence analysis will relay on the true marginal function $\omega$. }
    
{\color{red} \subsection{Stochastic framework} }

Our main motivation comes from observing machine learning problems where the functions $f_i, i=1,...,q$ are in the form of finite sums. In that case, the functions are usually approximated by random subsampling which  induces randomness in the optimization process, yielding random sequence of iterates. {\color{red} We will use upper case letters to emphasize random quantities where appropriate  e.g., $X_k$ for random iterates, and lowercase letters to denote the corresponding realizations e.g., $x_k$. To be more precise, let us denote by $(\Omega, \F, P)$ the probability space where:  $\Omega$ represents the set of all possible outcomes, i.e., all possible sample paths of the algorithm to be stated;  $\F$ is a $\sigma$-algebra on $\Omega$; and $P$ is a probability function on a measurable space $(\Omega, \F)$. 

We assume that the stochastic influence comes exclusively from random choices of approximate functions and its derivatives. The stochastic counterparts of $\tilde{f}_i,  g_i$ and $H^i$ will be denoted by $\tilde{F}_i,  G_i$ and $\chi^i$, respectively. These random objects constitute the model functions \eqref{novomki} which are also random and thus denoted by $\tilde{M}_{k,i}, i=1,..,q$. The corresponding agregate model function is denoted accordingly by $\tilde{M}_k$. Since the iterates update will be based on random models, we will also have $X_k$ as random vectors. The same is true for the trust region radius whose stochastic counterpart will be denoted by $\Delta_k$. Although random sampling is an original generator of stochastic influence within the considered framework, we set  $\{X_k\}_{k \in \mathbb{N}}$ as a representative stochastic process as common in the literature. Then, we denote by $\F_k$ the sub-$\sigma$-algebra of $\F$ generated by $X_1,...,X_k$. Thus, $\{\F_k\}_{k \in \mathbb{N}}$ is the natural filtration of $\F$ with respect to $\{X_k\}_{k \in \mathbb{N}}$ and there holds 
$\F_1\subseteq \F_2 \subseteq...\subseteq \F$.
We denote by $E(\cdot | \F_k)$ the conditional expectation at iteration $k$, and by $E(\cdot)$ unconditional expectation with respect to all possible sample paths $v \in \Omega$. 

In convergence analysis we will use the concept of probabilistic fully linear models {\color{red}\cite{VS}}. Instead of fixing the probability parameter $\alpha$, we introduce a sequence of relevant probabilities $\alpha:=\{\alpha_k\}_{k \in \mathbb{N}}$ and give the following definition. 
\begin{definition} {\color{red}\cite[Definition 3.2]{VS}}
A sequence of random models  $\{\tilde{M}_{k,i}\}_{k \in \mathbb{N}}$ is $\alpha$ -probabilistically $(c_f,c_g)$ fully linear  with respect to the corresponding sequence of $B(X_k,\Delta_k) $ if   the events 
\begin{equation*} \label{Iki} I_{k,i}=\{\tilde{M}_{k,i} \text{ is } (c_f,c_g) \text{ fully linear model of } f_i \text{ on } B(X_k,\Delta_k) \}\end{equation*} 
satisfy the condition 
$P(I_{k,i}|\F_k)\geq \alpha_k$ for all $k$. 
\end{definition}
Since the multi-objective setup requires multiple models, we will introduce the following definition of Jointly {\color{red} Independent Probabilistically Fully Linear models. }
\begin{definition}  (JIPFL condition)
We say that a sequence of multiple  random models $\{\tilde{M}_{k,1},...,\tilde{M}_{k,q}\}_{k \in \mathbb{N}}$ is jointly {\color{red} independent } $\alpha$ -probabilistically $(c_f,c_g)$ fully linear  with respect to the corresponding sequence of $B(X_k,\Delta_k) $ if the  sequence of random models  $\{\tilde{M}_{k,i}\}_{k \in \mathbb{N}}$ is $\alpha$ -probabilistically $(c_f,c_g)$ fully linear  with respect to the corresponding sequence of $B(X_k,\Delta_k) $ for each $i=1,...,q$ and the events $I_{k,1},...,I_{k,q}$ are mutually  independent conditionally on $\F_k$ for all $k \in \mathbb{N}$.  
\end{definition} 
Notice that the  JIPFL condition implies 
\begin{equation} \label{IkJIP} P(I_k| \F_k):=P(\bigcap_{i=1}^q I_{k,i}|\F_k)=\prod_{i=1}^qP(I_{k,i}|\F_k)\geq\alpha_k^q, \quad \mbox{for all } \; k \in \mathbb{N}.
\end{equation}
{\color{red} The above stated condition of independence } is  often fulfilled  in the finite sum setup since each  function $f_i$ is usually approximated by independent random sampling.
} {\color{red} The main point of the above definition is to allow us to connect the per function individual probabilistically fully linear models with the scalarization function $ \phi, $ which is nonsmooth and hence full linearity of the multiple random models with respect to $ \phi $ can not even be defined. However we will see later on that JIPFL condition allow us to prove that the multiple random models are good enough for almost sure convergence. }

\section{Algorithm}

{\color{red} We state the algorithm as follows.  Although a vast majority of objects in the algorithm is stochastic, we present them  in small letters for readability.  }

\noindent {\bf{Algorithm 1.}}  \\ \textit{(SMOP: Stochastic trust region method for Multi-Objective Problems)}\label{Alg1}
\begin{itemize}
\item[] Step 0. Input parameters:  $x_0 \in \R^n$,  {\color{red}$\Theta>0, \delta_{max}>0$,  $\delta_0\in(0,\delta_{max})$}, $\gamma_1,\eta_1\in(0,1),\gamma_2=1/\gamma_1$. 
\item[] Step 1. {\color{red} Form a model $\tilde{m}_k(d)$ {\color{red} by \eqref{novomki} and  \eqref{agregatni}. } 
\item[] Step 2.  Find a step $d_k\in B(0,\delta_k)$ such that:
\begin{equation} \label{Cauchy}
\tilde{m}_k(0)-\tilde{m}_k(d_k)\geq\frac{1}{2}\omega_m(x_k)\min\{\delta_k,\frac{\omega_m(x_k)}{\beta_k}\}, 
\end{equation}
where 
{\color{red} $\beta_k:=1+\max_{i \in \{1,...,q\}}\|H^i_k\|.$}
\item[] Step 3.  Compute
$$\rho_k=\frac{\tilde{\phi}(x_k)-\tilde{\phi}(x_k+d_k)}{\tilde{m}_k(0)-\tilde{m}_k(d_k)}$$
{\color{red}(Successful iteration)} If $\rho_k\geq\eta_1$ and $\omega_m(x_k)>\Theta\delta_k$, set $x_{k+1}=x_k+d_k$ and $\delta_{k+1}=\min\{\delta_{\max},\gamma_2\delta_k\}$.\\{\color{red}(Unsuccessful iteration)} Else,  {\color{red}set $x_{k+1}=x_k$ and $\delta_{k+1}=\gamma_1\delta_k$}}.
\item[] Step 4.  Set $k=k+1$ and go to Step 1.
\end{itemize}

{\color{red} In the initialization we choose a starting point together with several hyperparameters. {\color{red} The parameters  $\Theta$ and $\eta $ are used to define successful iterations. } These parameters also influence the trust region radius update, while the intensity of the update is controlled by parameter $\gamma_1$. {\color{red} Moreover, we set the initial value and the upper bound of the trust region radius to $\delta_0$ and $\delta_{max}$, respectively. }

In Step 1 we form {\color{red} approximate random quadratic models   for each function $f_i, i=1,...,q$ and the aggregate model  {\color{red} by \eqref{novomki} and  \eqref{agregatni}. }  } Considering the stochastic framework, the inflow of randomness happens here since the constituting approximate functions ($\tilde{f}_i$) and the derivatives ($g_i, H_k^i$) are constructed/sampled within this step. Later on we will see that the JIPFL condition {\color{red} ensures  the almost sure convergence.  JIPFL condition also } guides the sampling approach from two perspectives. {\color{red} First}, the samples are to be drawn independently across functions $f_i, i=1,...,q$. {\color{red} Beside that}, JIPFL condition guides the sample size update since it  influences the  condition \eqref{IkJIP}.

In Step 2 we search for a suitable direction which provides a sufficient decrease of the approximate aggregate model function. A suitable decrease is defined through $\delta_k, \beta_k$ and the stationarity measure  $\omega_m$ \eqref{amarginal}. In Lemma \ref{sufred} we prove that it is possible to find such direction and therefore Step 2 is well defined.  

In Step 3 we calculate an agreement between the approximate model reduction and the reduction of approximate scalarization function. If the agreement is sufficiently {\color{red} large} and the stationarity measure $\omega_m(x_k)$ is relatively {\color{red} large} with respect to $\delta_k$, then we accept the proposed iterate update and increase the trust region radius if possible. We will refer to these iterations as successful iterations in the sequel. Otherwise, if the iteration is not successful, we reject the update and decrease the trust region radius in order to give better chances to the model to be a good representative of the function $\tilde{\phi}$   on smaller region in the next iteration. 

One can notice that we alternate between the approximate multi-objective problem and its scalarization since the models are targeting $\tilde{\phi}$, while we use the measure of stationarity $\omega_m$ of an approximate multiobjective problem instead of  the stationarity measure of its scalarized version $\omega_{\tilde{\phi}}$. Since it is known that $\omega_{\tilde{\phi}}\geq \omega_m$, this results in relaxed condition \eqref{Cauchy}. The  reasoning behind this includes the fact that we are dealing with approximate (stochastic) versions in general. Imposing strict conditions while having possibly poor approximations of the objective functions is usually far from beneficial. On the other hand, this kind of relaxation usually needs to be compensated. We compensate in Step 3 where $\omega_m$ is used within the  acceptance criteria instead of $\omega_{\tilde{\phi}}$. This seems to provide a good balance on average.  }

The following lemma shows that the algorithm is well defined.

\begin{lemma}
\label{sufred}
For all $k$,  there exists $d_k$ such that \eqref{Cauchy} holds. 

\end{lemma}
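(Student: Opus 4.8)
The plan is to exhibit, for each fixed $k$, an explicit feasible step that achieves the prescribed decrease, obtained by scaling the optimal direction of the approximate marginal problem \eqref{amarginal}. Abbreviate $\omega:=\omega_m(x_k)$. Since $d\mapsto\max_{i\in\{1,\dots,q\}}\langle g_i(x_k),d\rangle$ is continuous on the compact unit ball $\{\|d\|\le 1\}$, the minimum defining $\omega_m(x_k)$ is attained, say at $d^\ast$, with $\|d^\ast\|\le 1$ and $\max_i\langle g_i(x_k),d^\ast\rangle=-\omega$; moreover $\omega\ge 0$ by the exact analogue of Lemma~\ref{lmarginal}(a) applied to $g_1,\dots,g_q$. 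If $\omega=0$ the right-hand side of \eqref{Cauchy} vanishes and $d_k=0$ works, so assume henceforth $\omega>0$.

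Next I would test the one-parameter family of candidate steps $t d^\ast$, $t\in[0,\delta_k]$; each is feasible since $\|t d^\ast\|\le t\le\delta_k$. Using $\tilde m_k(0)=\max_i\tilde f_i(x_k)$, the elementary subadditivity bound $\max_i(a_i+b_i)\le\max_i a_i+\max_i b_i$ with $a_i=\tilde f_i(x_k)$ and $b_i=t\langle g_i(x_k),d^\ast\rangle$ (so that $\max_i b_i=-t\omega$), and $\langle d^\ast,H_k d^\ast\rangle\le\|H_k\|\,\|d^\ast\|^2\le\|H_k\|\le\beta_k$, one gets
$$\tilde m_k(0)-\tilde m_k(t d^\ast)\ \ge\ t\omega-\tfrac12 t^2\beta_k\ =:\ g(t).$$
It then remains to maximize the concave quadratic $g$ on $[0,\delta_k]$. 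Its unconstrained maximizer is $t^\star=\omega/\beta_k$, with $g(t^\star)=\omega^2/(2\beta_k)$. If $\omega/\beta_k\le\delta_k$, take $d_k:=(\omega/\beta_k)d^\ast$, which gives $\tilde m_k(0)-\tilde m_k(d_k)\ge\omega^2/(2\beta_k)=\tfrac12\omega\min\{\delta_k,\omega/\beta_k\}$. If instead $\delta_k<\omega/\beta_k$, then $g$ is increasing on $[0,\delta_k]$ and $\delta_k\beta_k<\omega$, so taking $d_k:=\delta_k d^\ast$ yields $g(\delta_k)=\delta_k\bigl(\omega-\tfrac12\delta_k\beta_k\bigr)>\tfrac12\omega\delta_k=\tfrac12\omega\min\{\delta_k,\omega/\beta_k\}$. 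In both cases \eqref{Cauchy} holds, which proves the lemma.

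Regarding difficulty: this is the standard Cauchy-decrease construction, so no genuine obstacle is expected. The two points that need a little care are that the maxima in $\tilde m_k(0)$ and in $\tilde m_k(t d^\ast)$ may be attained at different indices --- handled by the subadditivity of $\max$ rather than by picking a common active component --- and the case split according to whether the unconstrained quadratic maximizer $\omega/\beta_k$ falls inside or outside $[0,\delta_k]$. The degenerate situation $\omega_m(x_k)=0$ should be mentioned separately, since there the requested reduction is vacuous and any $d_k$ (e.g. $d_k=0$) is admissible.
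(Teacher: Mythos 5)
Your proposal is correct and follows essentially the same route as the paper: scale the minimizer $d^\ast$ of the approximate marginal problem \eqref{amarginal}, lower bound $\tilde m_k(0)-\tilde m_k(t d^\ast)$ by the concave quadratic $t\omega_m(x_k)-\tfrac12 t^2\beta_k$ via the subadditivity of the max and $\langle d^\ast,H_kd^\ast\rangle\le\beta_k$, then maximize over $t\in[0,\delta_k]$ with the same two-case split on whether $\omega_m(x_k)/\beta_k$ exceeds $\delta_k$. The only cosmetic differences are your explicit treatment of the degenerate case $\omega_m(x_k)=0$ and of the attainment of the minimum, which the paper leaves implicit.
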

\begin{proof}
 We will prove that the condition \eqref{Cauchy} holds  for the Cauchy direction, $d_k^c=\alpha_k d_k^*$, where $d_k^*$ is a solution of the problem stated in \eqref{amarginal}, i.e., 
$$\omega_m(x_k)=-\min_{\|d\|\leq1}\left(\max_{i \in \{1,...,q\}}\langle g_i(x_k),d\rangle\right)=-\max_{i \in \{1,...,q\}}\langle g_i(x_k),d_k^*\rangle$$ and $\alpha_k=\argmin_{0\leq\alpha\leq\delta_k}\{\tilde{m}_k(\alpha d_k^*)\}$.  Since $\|d_k^*\|\leq 1$, we have $\alpha_k d_k^*\in B_k(0,\delta_k)$. Notice that $$\alpha_k=\argmin_{0\leq\alpha\leq\delta_k}\{\tilde{m}_k(\alpha d_k^*)\}=\argmax _{0\leq\alpha\leq\delta_k}\{\tilde{m}_k(0)-\tilde{m}_k(\alpha d_k^*)\}.$$  
Next, we lower bound $\tilde{m}_k(0)-\tilde{m}_k(\alpha d_k^*)$ by a quadratic function of $\alpha$.
\begin{align*}
    \tilde{m}_k(0)-\tilde{m}_k(\alpha d_k^*)&=\max_{i \in \{1,...,q\}}\tilde{f}_i(x_k)-\max_{i \in \{1,...,q\}}\{\tilde{f}_i(x_k)+\langle g_i(x_k),\alpha d_k^*\rangle-\frac{1}{2}\alpha^2\langle d_k^*,H^i_kd_k^*\rangle{\color{red}\}}\\
    &\geq-\alpha\max_{i \in \{1,...,q\}}\langle g_i(x_k),d_k^*\rangle-\frac{1}{2}\alpha^2 {\color{red}\max_{i \in \{1,...,q\}}\langle d_k^*,H^i_k d_k^*\rangle}\\
    &\geq\alpha\omega_m(x_k)-\frac{1}{2}\alpha^2\|d_k^*\|^2{\color{red}\beta_k}\geq\alpha\omega_m(x_k)-\frac{1}{2}\alpha^2\beta_k.
\end{align*}
Thus, we conclude that 
\begin{equation}
    \label{novon1} \tilde{m}_k(0)-\tilde{m}_k(d^c_k)=\max_{0\leq\alpha\leq\delta_k}\{\tilde{m}_k(0)-\tilde{m}_k(\alpha d_k^*)\}\geq\max_{0\leq\alpha\leq\delta_k}\{\alpha \omega_m(x_k)-\frac{1}{2}\alpha^2\beta_k\}.
    \end{equation}
Notice that the solution of the problem at the right-hand side of \eqref{novon1} is  given by $\alpha^*=\min \{\frac{\omega_m(x_k)}{\beta_k}, \delta_k\}. $ 
If $\frac{\omega_m(x_k)}{\beta_k}\leq \delta_k$, then we have 
$$\max_{0\leq\alpha\leq\delta_k}\{\alpha \omega_m(x_k)-\frac{1}{2}\alpha^2\beta_k\}=\frac{\omega_m(x_k)^2}{\beta_k}-\frac{1}{2}\frac{\omega_m(x_k)^2}{\beta_k^2}\beta_k=\frac{\omega_m(x_k)^2}{2\beta_k}.$$
Else, if $\frac{\omega_m(x_k)}{\beta_k}> \delta_k$, we obtain 
\begin{align*}
\max_{0\leq\alpha\leq\delta_k}\{\alpha \omega_m(x_k)-\frac{1}{2}\alpha^2\beta_k\}&=\delta_k \omega_m(x_k)-\frac{1}{2}\delta_k^2 \beta_k\\
    &\geq \delta_k\omega_m(x_k)-\frac{1}{2}\delta_k \omega_m(x_k)\\
    &=\frac{1}{2}\delta_k\omega_m(x_k).
\end{align*} 
Thus, having in mind both cases and  using \eqref{novon1} we obtain 
$$\tilde{m}_k(0)-\tilde{m}_k(d_k^c)\geq\frac{1}{2}\omega_m(x_k)\min\{\frac{\omega_m(x_k)}{\beta_k},\delta_k\},$$
which completes the proof.
 
\end{proof}

\section{Convergence analysis}

{\color{red} This section provides convergence analysis of the proposed method. We start the analysis by stating some basic assumptions. 
In Lemmas \ref{implikacija}-\ref{ll3} we provide some bounds that hold under assumption of full linearity. More precisely, we show that if the realizations of random models $\tilde{m}_{k,i}=\tilde{M}_{k,i}(v), i=1,...,q$, for some $v \in \Omega$ are fully linear models of $f_i, i=1,...,q$, respectively, on $B(x_k, \delta_k), $ then the distance between the true function $f_i$ and approximate function $\tilde{f}_i$ is controllable by $\delta_k^2$ on $B(x_k, \delta_k)$ for every $i=1,...,q$. Then, we show that the same is true for the distance between the function  $\phi$ and aggregate approximate model $\tilde{m}_k$. Under the same settings, we also show that  the distance between the marginal function $\omega(x_k)$ and its approximation $\omega_m (x_k)$ is controllable by $\delta_k$.  In Lemma \ref{thm} we also consider the same setup, but we prove that one of the acceptance criteria ($\rho_k\geq \eta_1$) is satisfied provided that the trust region radius is small enough. 

The analysis is continued by introducing Lyapunov function $\Psi_k$ that combines $\phi(X_k)$  and $\Delta_k^2$.  Under uniformly bounded iterates assumption and JIPFL condition, we prove that the sequence $\{\Psi_k\}_{k\in \mathbb{N}}$ converges almost surely and that the sequence $\{\Delta_k\}_{k\in \mathbb{N}}$  is square sumable almost surely, provided that the sequence $\{\alpha_k\}_{k\in \mathbb{N}}$ tends to 1 fast enough (Theorem \ref{prop1}). {\color{red} Theorems 4-5  provide further properties and yield the main } result  stated in  Theorem \ref{glavnate} where we prove that $\{\omega(X_k)\}_{k\in \mathbb{N}}$ tends to zero almost surely. The remaining lemmas and theorems provide some important intermediate results.}

\begin{assumption} \label{A1}
Functions $f_i, i=1,...,q$ are twice continuously differentiable and bounded from {\color{red}below}.
\end{assumption}

\begin{assumption} \label{A2}
There exists a positive constant $c_{h}$ such that for all $x\in \R^n$ and $i=1,...,q$ there holds
$\|\nabla^2 f_i(x)\|\leq c_{h}.$ Furthermore there exists a positive constant $c_{b}$ such that $\beta_k \leq c_b$ for every $k$.
\end{assumption}

\begin{assumption} \label{A1new}
Approximate functions $\tilde{F}_i, i=1,...,q$ are continuously-differentiable with $L$-Lipschitz continuous gradients satisfying the following inequality 
$\|\nabla \tilde{F}_i(x_k)-G_i(x_k)\| \leq c_a \delta_k$ with some $c_a>0.$
\end{assumption}

{\color{red} Assumption \ref{A2} is strong, but it is fulfilled in some important classes of machine learning problems such as logistic regression and linear least squares. } Assumption \ref{A1new} is also satisfied in many applications. For instance, subsampling strategies for finite sums usually yield $ \nabla \tilde{F}_i(x_k)=G_i(x_k)$. Alternatively, one can apply finite differences to approximate the relevant gradients with a controllable accuracy. 


\begin{lemma} \label{implikacija}
Assume that A\ref{A1}-A\ref{A1new} hold. {\color{red} Suppose that  $v \in \Omega$ is such that $\tilde{m}_{k,i}=\tilde{M}_{k,i} (v)$ is $(c_f,c_g)$-fully linear model of $f_i$ 
on $B(x_k,\delta_k)$, where $x_k=X_k(v)$ and $\delta_k=\Delta_k(v)$. 
Then there exists $c_e>0$ such that for all $d_k\in B(0,\delta_k)$ 
there holds
\begin{equation} \label{def2}  |\tilde{f}_i(x_k+d_k)-f_i(x_k+d_k)|\leq c_e\delta_k^2
\end{equation}
where $\tilde{f}_i=\tilde{F}_i (v).$
}
\end{lemma}

\begin{proof} 
Let us take any $d_k\in B(0,\delta_k)$, i.e., any $d_k$ satisfying    $\|d_k\| \leq \delta_k$. Then there exist $\tau^i_k$ and $ v^i_k $ on the {\color{red} line segment between $x_k$ and $x_k+d_k$ } such that 
\begin{align*}
   & |\tilde{f}_i(x_k+d_k)-f_i(x_k+d_k)|=|\tilde{f}_i(x_k)+\nabla \tilde{f}_i(\tau^i_k)d_k-f_i(x_k+d_k)|\\\nonumber 
   &=|\tilde{f}_i(x_k)+\nabla^T \tilde{f}_i(\tau^i_k)d_k-f_i(x_k)-\nabla^T f_i(x_k)d_k-\frac{1}{2} d_k^T \nabla^2 f_i(v^i_k) d_k|\\\nonumber 
   &\leq |\tilde{f}_i(x_k)-f_i(x_k)| +\|\nabla \tilde{f}_i(\tau^i_k)-\nabla f_i(x_k)\|\|d_k\|+\frac{1}{2} \|d_k\|^2 \|\nabla^2 f_i(v^i_k) \|\\\nonumber 
   &\leq c_f \delta_k^2+\|\nabla \tilde{f}_i(\tau^i_k)-\nabla f_i(x_k)\|\delta_k+\frac{1}{2} \delta_k^2 c_{h}.
\end{align*}
 Moreover, {\color{red} by using the second full linearity condition} \eqref{def1b} and the fact that $g_i(x_k)= \nabla \tilde{m}_{k,i} (0)$, {\color{red} where $g_i=G_i(v)$,} we can upper bound  $\|\nabla \tilde{f}_i(\tau^i_k)-\nabla f_i(x_k)\|$ as follows.  \begin{align*}
   & \|\nabla \tilde{f}_i(\tau^i_k)-\nabla f_i(x_k)\|  =\|\nabla \tilde{f}_i(\tau^i_k)-\nabla f_i(x_k)+g_i(x_k)-g_i(x_k)+\nabla \tilde{f}_i(x_k)-\nabla \tilde{f}_i(x_k)\| \\\nonumber 
   &\leq \|\nabla \tilde{f}_i(\tau^i_k)-\nabla \tilde{f}_i(x_k)\|+\|\nabla f_i(x_k)-g_i(x_k)\|+\|\nabla \tilde{f}_i(x_k)-g_i(x_k)\| \\\nonumber 
   &\leq L\|\tau^i_k-x_k\|+c_g \delta_k+c_a\delta_k \leq L\delta_k+c_g \delta_k+c_a\delta_k=(L+c_g+c_a) \delta_k.
\end{align*}
Thus we conclude that 
$$|\tilde{f}_i(x_k+d_k)-f_i(x_k+d_k)|\leq \delta_k^2 c_e,$$
where $c_e=c_f+L+c_g+c_a+c_{h}/2$, which completes the proof. 
\end{proof}


\begin{lemma}
\label{margfun}
{\color{red} Assume that  A\ref{A1} holds
Suppose that  $v \in \Omega$ is such that $\tilde{m}_{k,i}=\tilde{M}_{k,i} (v)$ is $(c_f,c_g)$-fully linear model of $f_i$ 
on $B(x_k,\delta_k)$ for all $i=1,..,q$, where $x_k=X_k(v)$ and $\delta_k=\Delta_k(v)$.} Then
\begin{equation}
\label{omega}
    |\omega(x_k)-\omega_m(x_k)|\leq\delta_k c_g
\end{equation}
\end{lemma}

\begin{proof}  
{\color{red} By using the following notation 
$h_k(d):=\max_{i \in \{1,...,q\}} \langle\nabla f_i(x_k), d\rangle$,  $\tilde{h}_k(d):=\max_{i \in \{1,...,q\}} \langle g_i(x_k), d\rangle$ we get 
\begin{eqnarray*}
    \label{wwm}
 |\omega(x_k)-\omega_m(x_k)|&=& \left | \min_{\|d\|\leq 1} h_k(d)-\min_{\|d\|\leq 1} \tilde{h}_k(d)\right| \\\nonumber
 &\leq& \sup_{\|d\|\leq 1} \left|h_k(d)-\tilde{h}_k(d)\right|\\\nonumber
 &\leq& \sup_{\|d\|\leq 1} \max_{i \in \{1,...,q\}} \left|\langle\nabla f_i(x_k)-g_i(x_k), d\rangle\right|
 \\\nonumber
 &\leq&  \max_{i \in \{1,...,q\}} \|\nabla f_i(x_k)-g_i(x_k)\|  \sup_{\|d\|\leq 1} \|d\|\\\nonumber
 &\leq& c_g \delta_k,
\end{eqnarray*}
where 
the second full linearity condition \eqref{def1b} is used {\color{red} for the the final inequality}. }
\end{proof}

{\color{red} In the next {\color{red}lemma} we prove that although $ \phi $ is nonsmooth the multidimensional random model approximates $ \phi$ and $ \tilde{\phi}$ with the order of $ \delta_k^2.$ }
\begin{lemma}
\label{ll3} {\color{red} Assume that  A\ref{A1} -A\ref{A1new} hold. 
Suppose that  $v \in \Omega$ is such that $\tilde{m}_{k,i}=\tilde{M}_{k,i} (v)$ is $(c_f,c_g)$-fully linear model of $f_i$ 
on $B(x_k,\delta_k)$ for all $i=1,..,q$, where $x_k=X_k(v)$ and $\delta_k=\Delta_k(v)$. Then the following inequalities hold for all $d_k \in B(0,\delta_k)$:
\begin{equation}
    \label{fimtil1}
|\phi(x_k+d_k)- \tilde{m}_k(d_k)|\leq c_{f} \delta_k^2,
\end{equation}
\begin{equation}
\label{fifitil}
    |\tilde{\phi}(x_k+d_k)-\phi(x_k+d_k)|\leq c_e\delta_k^2,
\end{equation}  
\begin{equation}
\label{fitilmtil}
    \vert \tilde{\phi}(x_k+d_k)-\tilde{m}_k(d_k) \vert \leq c_{\tilde{\Phi}}\delta_k^2,
\end{equation}
where $c_{\tilde{\Phi}}=\max\{c_f, c_e\}$.
}
\end{lemma}
\begin{proof}
{\color{red}
Inequality \eqref{fimtil1} is obtained by using the first inequality of fully linear models \eqref{def1a} 
as follows
\begin{eqnarray*}
    |\phi(x_k+d_k)- \tilde{m}_k(d_k)|&=& \left|\max_{i \in \{1,...,q\}}f_i(x_k+d_k)- \max_{i \in \{1,...,q\}}\tilde{m}_{k,i} (d_k)\right|\\\nonumber
    &\leq& \max_{i \in \{1,...,q\}}|f_i(x_k+d_k)-\tilde{m}_{k,i} (d_k)|\\\nonumber
    &\leq& \max_{i \in \{1,...,q\}}c_f \delta_k^2=c_f \delta_k^2.
\end{eqnarray*}
We obtain \eqref{fifitil}
by using similar arguments and \eqref{def2} instead of \eqref{def1a}, while \eqref{fitilmtil} is obtained by using the fact that 
$$|\tilde{\phi}(x_k+d_k)-\tilde{m}_k(d_k) |\leq  |\tilde{\phi}(x_k+d_k)-\phi(x_k+d_k)|+|\phi(x_k+d_k)-\tilde{m}_k(d_k) |$$
and applying \eqref{fimtil1} and \eqref{fifitil}.
}
\end{proof}

\begin{lemma}
\label{thm}
{\color{red} Assume that  A\ref{A1} -A\ref{A1new} hold. 
Suppose that  $v \in \Omega$ is such that $\tilde{m}_{k,i}=\tilde{M}_{k,i} (v)$ is $(c_f,c_g)$-fully linear model of $f_i$ 
on $B(x_k,\delta_k)$ for all $i=1,..,q$, where $x_k=X_k(v)$ and $\delta_k=\Delta_k(v)$. } Suppose that $ d_k $ satisfies \eqref{Cauchy}. Then $\rho_k\geq\eta_1$ provided that 
\begin{equation}
\label{deltabound}
    \delta_k\leq\min\{\frac{\omega_m(x_k)}{c_{b}},\frac{\omega_m(x_k)(1-\eta_1)}{2c_{\tilde{\Phi}}}\}.
\end{equation}
\end{lemma}
\begin{proof}
 From \eqref{Cauchy} it follows
\begin{align*}
\tilde{m}_k(0)-\tilde{m}_k(d_k)&\geq \frac{1}{2}\omega_m(x_k)\min\{\frac{\omega_m(x_k)}{\beta_k},\delta_k\}\\
&\geq \frac{1}{2}\omega_m(x_k)\min\{\frac{\omega_m(x_k)}{c_{b}},\delta_k\}\\
&=\frac{1}{2}\omega_m(x_k)\delta_k
\end{align*}
 Furthermore, using $\tilde{\phi}(x_k)=\tilde{m}_k(0)$ and \eqref{fitilmtil}, we obtain 
$$|\rho_k-1|=\left|\frac{\tilde{\phi}(x_k+d_k)-\tilde{\phi}(x_k)-\tilde{m}_k(d_k)+\tilde{m}_k(0)}{\tilde{m}_k(d_k)-\tilde{m}_k(0)}\right|=$$
$$\leq \left|\frac{\tilde{\phi}(x_k+d_k)-\tilde{m}_k(d_k)}{\tilde{m}_k(d_k)-\tilde{m}_k(0)}\right|\leq\frac{2c_{\tilde{\Phi}}\delta_k^2}{\omega_m(x_k)\delta_k}=\leq 1-\eta_1,$$
and thus we conclude that $\rho_k\geq\eta_1$.
\end{proof}
To continue with the convergence analysis, let us define  an auxiliary {\color{red} Lyapunov} function as usual in this type of analysis, \cite{CH} 
{\color{red} $$ \Psi_k:=\nu\phi(X_k)+(1-\nu)\Delta_k^2, \quad \nu\in(0,1).$$}
We are going to show that we can choose the algorithm parameters such that  the following inequality holds
\begin{equation*}
\label{psi}
E[{\color{red}\Psi_{k+1}-\Psi_k}|\mathcal{F}_{k}]\leq -\sigma{\color{red}\Delta_k^2}+(1-\alpha_k^q)\tilde{\sigma}, \quad k=0,1,...
\end{equation*}
for some $ \sigma, \tilde{\sigma} >0.$ 
{\color{red} Let us define {\color{red} the event of successful } iteration $k$ as 
\begin{equation*} \label{Skevent} S_k=\{{\cal{R}}_k\geq\eta_1 \text{ and }  \omega_m(X_k)>\Theta\Delta_k   \},\end{equation*}
where  ${\cal{R}}_k$ denotes the stochastic counterpart of $\rho_k$.
We also define  the  complementary event (unsuccessful iteration $k$) by 
\begin{equation*} \label{barSkevent} \bar{S}_k=\{{\cal{R}}_k<\eta_1 \text{ or }  \omega_m(X_k)\leq \Theta\Delta_k   \}.\end{equation*}
Notice that 
\begin{equation}
\label{uns}
    E(\Psi_{k+1}-\Psi_k| \F_k, \bar{S}_k)=(1-\nu)(\gamma_1^2-1)\Delta_k^2=:-c_1 \Delta_k^2, 
\end{equation}
for some  $c_1>0$.
Thus, in the subsequent lemma we focus our attention on estimating 
$$E(\Psi_{k+1}-\Psi_k| \F_k, S_k)=E(\nu(\phi(X_{k+1})-\phi(X_k))+(1-\nu)(\gamma_2^2-1)\Delta_k^2| \F_k, S_k).$$}

The proof of the following lemma resembles the analysis of  \cite{CH}. However, having the multi-objective problem requires nontrivial modifications {\color{red} due to the fact that the random models are defined per function $ f_i $ and that $ \phi $ is nonsmooth. Two additional assumptions are needed for the analysis. The first one is JIPFL property defined in Section 3, while the second assumption states that the iterative sequence is uniformly bounded. } 
Although relatively strong, the later assumption is often used in stochastic optimization, \cite{BCN}, \cite{CSVDFO}, \cite{BKAP},\cite{STORM3},\cite{POLY}.
\begin{assumption} \label{A4JIP}
{\color{red} The sequence of multiple  random models $\{\tilde{M}_{k,1},...,\tilde{M}_{k,q}\}_{k \in \mathbb{N}}$ satisfies JIPFL condition  with respect to the corresponding sequence of $B(X_k,\Delta_k) $ .} 

\end{assumption}
\begin{assumption} \label{Aboundediterates}
The sequence $\{X_k\}_{k \in \mathbb{N}}$  is uniformly bounded.
\end{assumption}

For the purpose of the convergence analysis, {\color{red} let  us define the following events 
\begin{equation*} \label{Jki} J_{k,i}=\{|\tilde{F}_i(X_k+d_k)-f_i(X_k+d_k)|\leq c_e\Delta_k^2 \text{ for all  } d_k\in  B_k(0,\Delta_k) \},\end{equation*} 
$i=1,...,q$ and 
\begin{equation*} \label{Jk}
J_k:=\bigcap_{j=1}^q J_{k,j}. 
\end{equation*}
}
Then, under assumptions A\ref{A1}-A\ref{A1new}, according to Lemma \ref{implikacija}  there holds \\ $P(J_k| I_k,\F_k)=1.$ Moreover, 
    \begin{equation} \label{palfaq} P(I_k, J_k|\mathcal{F}_k)=P(I_k|\mathcal{F}_k)P(J_k|I_k, \mathcal{F}_k)\geq \alpha_k^q 1=\alpha_k^q
    \end{equation} and we can also conclude that $P(I_k, \bar{J}_k|\mathcal{F}_k)=0$, $P(\bar{I}_k, J_k|\mathcal{F}_k)\leq 1-\alpha_k^q$ and $P(\bar{I}_k, \bar{J}_k|\mathcal{F}_k)\leq 1-\alpha_k^q ,$
    {\color{red}
    where $\bar{I}_k$ and $\bar{J}_k$ denote the complementary events of $I_k$ and $J_k$, respectively. We also use $D_k$ to denote stochastic counterpart of the step size $d_k$ determined in Step 2 of the SMOP algorithm.}

\begin{lemma} \label{prvapomocna}
     Suppose that {\color{red} A\ref{A1}-A\ref{Aboundediterates}  hold } and  there exists $\bar{\alpha}>0$ such that $\alpha_k \geq \bar{\alpha}$ for all $k$. Then there exist positive constants $c_6, c_7$ such that the following holds for all $k$ 
    \begin{equation*} \label{a} 
E({\color{red} \Psi_{k+1}-\Psi_{k}}|\mathcal{F}_{k},S_k)\leq -c_6{\color{red}\Delta_k^2} +c_7 (1-\alpha_k^q), 
\end{equation*}
if  
    \begin{equation}
        \label{thtbnd}
           {\color{red}\Theta}\geq\max\{c_b,5c_f,\frac{4c_e}{\eta_1}\} {\color{red} \mbox{ and } \frac{\nu}{1-\nu}\geq\frac{4\gamma_2^2-2}{\min\{c_e,c_f\}} }
    \end{equation}

\end{lemma}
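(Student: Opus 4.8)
The goal is to bound $E(\psi_{k+1}-\psi_k \mid \mathcal{F}_k, S_k)$ on a successful iteration, where $\psi_{k+1}-\psi_k = \nu(\phi(x_{k+1})-\phi(x_k)) + (1-\nu)(\gamma_2^2-1)\delta_k^2$. The plan is to split the expectation according to whether the model is fully linear, i.e. to condition on the events $I_k \cap J_k$ (which has conditional probability at least $\alpha_k^q$ by \eqref{palfaq}) and its complement, and handle each separately.

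\emph{The good case $I_k\cap J_k$.} Here the model is fully linear, so Lemma \ref{ll3} applies: in particular \eqref{fitilmtil} gives $|\tilde\phi(x_k+d_k)-\tilde m_k(d_k)|\le c_{\tilde\Phi}\delta_k^2$ and \eqref{fifitil} gives $|\tilde\phi(x_k+d_k)-\phi(x_k+d_k)|\le c_e\delta_k^2$, while \eqref{fimtil2} and $\tilde\phi(x_k)=\tilde m_k(0)$ control $|\phi(x_k)-\tilde m_k(0)|$. First I would argue that on a successful iteration with a fully linear model, the trust-region test $\rho_k\ge\eta_1$ together with $\omega_m(x_k)>\Theta\delta_k$ forces a genuine decrease of $\phi$: writing $\phi(x_{k+1})-\phi(x_k)$ as $\phi(x_k+d_k)-\phi(x_k)$ and inserting $\pm\tilde\phi(x_k+d_k)$, $\pm\tilde m_k(0)$, $\pm\tilde m_k(d_k)$, one gets
$$\phi(x_{k+1})-\phi(x_k) \le -\eta_1\big(\tilde m_k(0)-\tilde m_k(d_k)\big) + (2c_e+2c_f)\delta_k^2.$$
Then the Cauchy decrease \eqref{Cauchy} together with $\omega_m(x_k)>\Theta\delta_k\ge c_b\delta_k\ge\beta_k\delta_k$ (using $\Theta\ge c_b$ from \eqref{thtbnd}) yields $\tilde m_k(0)-\tilde m_k(d_k)\ge\tfrac12\omega_m(x_k)\delta_k\ge\tfrac12\Theta\delta_k^2$. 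Combining, $\phi(x_{k+1})-\phi(x_k)\le -(\tfrac12\eta_1\Theta - 2c_e - 2c_f)\delta_k^2$, and the remaining conditions $\Theta\ge 5c_f$, $\Theta\ge 4c_e/\eta_1$ in \eqref{thtbnd} are exactly what makes this coefficient large enough that, after adding the $(1-\nu)(\gamma_2^2-1)\delta_k^2$ term and choosing $\nu$ close enough to $1$, the net contribution is $\le -c_6\delta_k^2$ for some $c_6>0$.

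\emph{The bad case $\overline{I_k\cap J_k}$.} Here the model may be arbitrary, but the iterate still changes by at most $d_k\in B(0,\delta_k)$, so I would bound $\phi(x_{k+1})-\phi(x_k)$ crudely using the Lipschitz/boundedness hypotheses: since $\{x_k\}$ is bounded and the $f_i$ are $C^2$ (Assumptions \ref{A1}, \ref{A2}), $\phi$ is Lipschitz on the relevant bounded set with some constant $L_\phi$, so $\phi(x_{k+1})-\phi(x_k)\le L_\phi\delta_k$. Also $\delta_k$ is bounded by $\delta_{\max}$, so $\delta_k\le\delta_{\max}$ and hence $\phi(x_{k+1})-\phi(x_k)\le L_\phi\delta_{\max}$ and $(\gamma_2^2-1)\delta_k^2\le(\gamma_2^2-1)\delta_{\max}^2$; altogether $\psi_{k+1}-\psi_k\le C$ for a fixed constant $C$ on this event. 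Since $P(\overline{I_k\cap J_k}\mid\mathcal{F}_k,S_k)\le P(\overline{I_k\cap J_k}\mid\mathcal{F}_k)/P(S_k\mid\mathcal F_k)$ — here one must be slightly careful, but using $P(I_k\cap J_k\mid\mathcal F_k)\ge\alpha_k^q$ and $\alpha_k\ge\bar\alpha$ one controls this probability by $(1-\alpha_k^q)$ up to a constant factor — the bad-case contribution is at most $c_7(1-\alpha_k^q)$.

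\emph{Assembling.} Taking the total expectation $E(\psi_{k+1}-\psi_k\mid\mathcal F_k,S_k) = E(\cdot\mid\mathcal F_k,S_k,I_k\cap J_k)P(I_k\cap J_k\mid\cdots) + E(\cdot\mid\mathcal F_k,S_k,\overline{I_k\cap J_k})P(\overline{I_k\cap J_k}\mid\cdots)$, the good case gives $\le -c_6\delta_k^2$ (weighted by a probability $\le 1$, but since the bound is negative one keeps it, possibly absorbing a constant), and the bad case gives $\le c_7(1-\alpha_k^q)$, which is \eqref{a}. The main obstacle I anticipate is the bookkeeping around the conditioning on $S_k$: the events $S_k$, $I_k$, $J_k$ are not independent, and one needs the lower bound $\alpha_k\ge\bar\alpha$ precisely so that $P(S_k\mid\mathcal F_k)$ cannot be so small that dividing by it blows up the bad-case probability — this is where the boundedness of $\{x_k\}$ and $\alpha_k\ge\bar\alpha$ both get used in an essential way, rather than just for clean constants.
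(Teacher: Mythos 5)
There is a genuine gap in your ``good case''. Via the acceptance ratio you derive $\phi(x_{k+1})-\phi(x_k)\le -\eta_1\big(\tilde m_k(0)-\tilde m_k(d_k)\big)+(2c_e+2c_f)\delta_k^2$ and, after the Cauchy decrease, a coefficient $\tfrac12\eta_1\Theta-2c_e-2c_f$, claiming that \eqref{thtbnd} makes it positive. It does not: \eqref{thtbnd} only guarantees $\Theta\ge\max\{c_b,5c_f,4c_e/\eta_1\}$, while your chain needs roughly $\Theta\ge 4(c_e+c_f)/\eta_1$. For instance with $\eta_1=0.1$, $c_e=c_f=1$ and $c_b$ small, \eqref{thtbnd} permits $\Theta=40$, and then $\tfrac12\eta_1\Theta=2<4=2c_e+2c_f$, so your estimate does not even show a decrease of $\phi$, let alone the strict decrease $-c\,\delta_k^2$ needed to absorb the positive term $(1-\nu)(\gamma_2^2-1)\delta_k^2$ by taking $\nu$ close to $1$. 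The idea you are missing is that on the event $I_k$ the paper never invokes the ratio test: full linearity allows a direct comparison of $\phi$ with the model, $|\phi(x_k+d_k)-\tilde m_k(d_k)|\le c_f\delta_k^2$ and $|\phi(x_k)-\tilde m_k(0)|\le c_f\delta_k^2$ (Lemma \ref{ll3}), so the full, un-discounted model reduction is available and $\phi(x_{k+1})-\phi(x_k)\le 2c_f\delta_k^2-\tfrac12\omega_m(x_k)\min\{\delta_k,\omega_m(x_k)/c_b\}\le(2c_f-\tfrac12\Theta)\delta_k^2\le-\tfrac12 c_f\delta_k^2$ once $\Theta\ge\max\{c_b,5c_f\}$. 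The $\eta_1$-discounted ratio-test argument is reserved in the paper for the intermediate event $\bar I_k\cap J_k$, where only the $c_e$-error appears and a threshold of order $c_e/\eta_1$ suffices; this is precisely why \eqref{thtbnd} lists $5c_f$ and $4c_e/\eta_1$ separately rather than requiring $4(c_e+c_f)/\eta_1$. (Your coarser two-way split, lumping $\bar I_k\cap J_k$ with $\bar I_k\cap\bar J_k$ into a constant-bounded ``bad'' event of probability at most $1-\alpha_k^q$, is structurally adequate for \eqref{a}; the paper's third case only sharpens constants.)

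A secondary remark on the conditioning: your proposed repair --- bounding $P(\overline{I_k\cap J_k}\mid\mathcal F_k,S_k)$ by $(1-\alpha_k^q)/P(S_k\mid\mathcal F_k)$ and asserting that $\alpha_k\ge\bar\alpha$ keeps $P(S_k\mid\mathcal F_k)$ away from zero --- is not justified; nothing in the assumptions lower-bounds the probability of a successful iteration. The paper itself simply applies \eqref{palfaq} under the additional conditioning on $S_k$, and the bound is consumed in Theorem \ref{prop1} only after multiplying back by $P(S_k\mid\mathcal F_k)\le 1$; the constant $\bar\alpha$ enters the constants (through $c_6$), not through any control of $P(S_k\mid\mathcal F_k)$.
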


\begin{proof}
{\color{red} Given $\F_k \bigcap S_k$, the following events make mutually exclusive and collectively exhaustive events at iteration $k$
\begin{equation*}
\label{Udogadjaji}
U^1_k := I_k, \quad  U^2_k:=\overline{I}_k \bigcap J_k, \quad   U^3_k:=\overline{I}_k \bigcap \overline{J}_k.
\end{equation*}
We analyze these three cases separately and gather them together at the end of the proof to obtain the result. }

a){\color{red} Let us consider $E(\Psi_{k+1}-\Psi_{k}|\mathcal{F}_{k},S_k, U_k^1)$ first. Since $I_k$ implies $J_k$, and $S_k$ implies that $\omega_m(X_k)\geq{\color{red}\Theta}\Delta_k$, using {\color{red}\eqref{fimtil1}}, and Lemma \ref{sufred} we obtain 
\begin{eqnarray}
    \label{final}
&& E(\phi(X_{k+1}) - \phi(X_k) |\mathcal{F}_{k},S_k, U_k^1) \\\nonumber & = & E(\phi(X_{k+1}) - \tilde{M}_k(D_k) + \tilde{M}_k(0) - \phi(X_k) + \tilde{M}_k(D_k) - \tilde{M}_k(0) |\mathcal{F}_{k},S_k, U_k^1)  \nonumber\\
& \leq & E(2 c_f \Delta_k^2 - \frac{1}{2} w_m(X_k) \min \{\Delta_k,\frac{w_m(X_k)}{c_b}\} |\mathcal{F}_{k},S_k, U_k^1) \nonumber\\\nonumber
& \leq & E(2 c_f \Delta_k^2 - \frac{1}{2} w_m(X_k) \Delta_k |\mathcal{F}_{k},S_k, U_k^1)\\\nonumber &\leq& E( 2 c_f \Delta_k^2 - \frac{1}{2} \Theta \Delta_k^2|\mathcal{F}_{k},S_k, U_k^1)\\\nonumber
& < &E(-\frac{1}{2}c_f{\color{red}\Delta^2_k}|\mathcal{F}_{k},S_k, U_k^1)=-c_1 \Delta_k^2,
\end{eqnarray}
for $\Theta\geq\max\{c_b,5c_f\}$ and $ c_1 = \frac{1}{2}c_f>0$. This further implies 
\begin{eqnarray*}
\label{novo1}
&& E(\Psi_{k+1}-\Psi_k|\mathcal{F}_{k},S_k, U_k^1)\\\nonumber
&=& E(\nu(\phi(X_{k+1})-\phi(X_k))+(1-\nu)(\gamma_2^2-1)\Delta_k^2|\mathcal{F}_{k},S_k, U_k^1)\\\nonumber 
&\leq& [-\nu c_1+(1-\nu)(\gamma_2^2-1)]\Delta_k^2,
\end{eqnarray*}
and thus  by choosing $\nu$ as in \eqref{thtbnd} we obtain  $$\frac{\nu}{1-\nu}\geq\frac{2\gamma_2^2-1}{c_1}$$ and
\begin{equation}
\label{b2dk2}
    E(\Psi_{k+1}-\Psi_k|\mathcal{F}_{k},S_k, U_k^1)\leq-\gamma_2^2\Delta_k^2=-c_2\Delta_k^2, 
\end{equation}
with $c_2=\gamma_2^2>0$.

b)Now let us consider $E(\Psi_{k+1}-\Psi_{k}|\mathcal{F}_{k},S_k, U_k^2)$.  Using \eqref{fifitil} and \eqref{Cauchy}, and the fact that $S_k$ implies  ${\cal{R}}_k\geq\eta_1$ we get
\begin{eqnarray*} 
&&E(\phi(X_{k+1})-\phi(X_{k})|\mathcal{F}_{k},S_k, U_k^2) \\\nonumber &=& E(\phi(X_{k+1})-\tilde{\phi}(X_{k+1}) + \tilde{\phi}(X_{k}) - \phi(X_k) + \tilde{\phi}(X_{k+1}) -\tilde{\phi}(X_{k})|\mathcal{F}_{k},S_k, U_k^2) \\
& \leq & 2 E(c_e\Delta_k^2+\tilde{\phi}(X_{k+1}) -\tilde{\phi}(X_{k})|\mathcal{F}_{k},S_k, U_k^2))\\\nonumber
&=& E(2c_e\Delta_k^2-{\cal{R}}_k(\tilde{M}_k(D_k)-\tilde{M}_k(0)) |\mathcal{F}_{k},S_k, U_k^2)\\\nonumber 
&\leq& E(2c_e\Delta_k^2-\eta_1(\tilde{M}_k(D_k)-\tilde{M}_k(0)) |\mathcal{F}_{k},S_k, U_k^2) \\\nonumber 
&\leq& E( 2 c_e\Delta_k^2-\frac{\eta_1\omega_m(X_k)}{2}\min\{\frac{\omega_m(X_k)}{c_{b}},\Delta_k\}|\mathcal{F}_{k},S_k, U_k^2)\\ \nonumber 
&\leq& [2 c_e-\frac{\eta_1\Theta}{2}\min\{\frac{\Theta}{c_{b}},1\}]\Delta_k^2=[2 c_e-\frac{\eta_1\Theta}{2}]\Delta_k^2\leq-\frac{1}{2}c_e\Delta_k^2=-c_3\Delta_k^2.
\end{eqnarray*}
for $\Theta\geq\frac{5c_e}{\eta_1}$, and $c_3=\frac{1}{2}c_e>0$. Again, from \eqref{thtbnd} we obtain 
$$\frac{\nu}{1-\nu}\geq\frac{2\gamma_2^2-1}{c_3}$$
and we get that 
\begin{equation}
\label{c1c}
    E(\Psi_{k+1}-\Psi_k|\mathcal{F}_{k},S_k, U_k^2)\leq[-\nu c_3+(1-\nu)(\gamma_2^2-1)]\Delta_k^2\leq -\gamma_2^2\Delta_k^2= -c_4 \Delta_k^2.
\end{equation}
for $c_4=\gamma_2^2>0$

c)Finally, let us consider $E(\Psi_{k+1}-\Psi_{k}|\mathcal{F}_{k},S_k, U_k^3)$. 
 Again, $S_k$ implies  $\omega_m(X_k)\geq\Theta\Delta_k$, but an increase of  $\Psi_k$ can happen.  However, using Taylor expansion, A\ref{A2}, and the Cauchy Schwartz inequality, we obtain the following bound regardless of the scenario $U_k^3$.
\begin{eqnarray}\nonumber
\phi(X_{k+1})-\phi(X_k)& = &\max_{i \in \{1,...,q\}} f_i(X_{k+1}) - \max_{i \in \{1,...,q\}} f_i(X_k) \\\nonumber
 &\leq  & \max_{i \in \{1,...,q\}} |\nabla^T f_i(x_k)d_k+\frac{1}{2}D_k^T\nabla^2 f_i(T_k)D_k|\\\nonumber
\label{dcase}
    &\leq & \max_{i \in \{1,...,q\}}(\|\nabla f_i(X_k)\|\|D_k\|+\frac{1}{2}\Delta_k^2 c_{h}).
\end{eqnarray}
Since the iterates are assumed to be bounded, the continuity of the gradients implies  the existence of $G>0$ such that $\max_{i \in \{1,...,q\}}\|\nabla f_i(X_k)\|\leq G$. Since $\Delta_k \leq \delta_{max}$ there exists a constant $c_5$ such that 
 \begin{equation*} \label{bezBk} \phi(X_{k+1})-\phi(X_k)\leq G\delta_{max}+\frac{1}{2}\delta_{max}^2c_h=: c_5
 \end{equation*} and thus 
 \begin{equation} \label{bezBkpsi} E(\Psi_{k+1}-\Psi_k |\mathcal{F}_{k},S_k, U_k^3)\leq c_5 +(1-\nu)(\gamma_2^2-1)\Delta_k^2.
 \end{equation}
 }

 Now, we combine inequalities \eqref{b2dk2},\eqref{c1c} and \eqref{bezBkpsi} to estimate $E(\Psi_{k+1}-\Psi_{k}|\mathcal{F}_{k},S_k)$. 
Using the total probability formula we obtain 
\begin{eqnarray*}
   &&  E(\Psi_{k+1}-\Psi_{k}|\mathcal{F}_{k},S_k)\\\nonumber
   &=& \sum_{i=1}^{3} P(U_k^i | \mathcal{F}_{k},S_k) E(\Psi_{k+1}-\Psi_{k}|\mathcal{F}_{k},S_k, U_k^i)\\\nonumber
   &\leq & P(U_k^1 | \mathcal{F}_{k},S_k) E(\Psi_{k+1}-\Psi_{k}|\mathcal{F}_{k},S_k, U_k^1)\\\nonumber
   &+& P(U_k^3 | \mathcal{F}_{k},S_k) E(\Psi_{k+1}-\Psi_{k}|\mathcal{F}_{k},S_k, {\color{red}U_k^3}),
\end{eqnarray*}
where the last inequality follows from the fact that 
 $E(\Psi_{k+1}-\Psi_{k}|\mathcal{F}_{k},S_k, U_k^2) \leq -c_3 \Delta_k^2<0$. 
Moreover, notice that \eqref{b2dk2} implies $E(\Psi_{k+1}-\Psi_{k}|\mathcal{F}_{k},S_k, U_k^1)\leq -c_2 \Delta_k^2<0$  and that the conditional expectation $E(\Psi_{k+1}-\Psi_{k}|\mathcal{F}_{k},S_k, U_k^3 ) $ is upper bounded by the  positive quantity given in \eqref{bezBkpsi}. Thus, by  \eqref{palfaq} we obtain 
$$E(\Psi_{k+1}-\Psi_{k}|\mathcal{F}_{k},S_k)\leq -\alpha_k^q  c_2 \Delta_k^2+(1-\alpha_k^q) (c_5 +(1-\nu)(\gamma_2^2-1)\Delta_k^2)$$ and the result follows with $ c_6 = \bar{\alpha}^q c_2 $ and $ c_7 = c_5 +(1-\nu)(\gamma_2^2-1)\delta_{\max}$ due to $\alpha_k \geq \bar{\alpha}$ and $\delta_k \leq \delta^2_{max}.$

\end{proof} 
Now we show that the sequence of trust region radii is square sumable under the {\color{red} following} assumption. 

\begin{assumption}
    \label{A5} The sequence $\{\alpha_k\}_k$ satisfies $\sum_{k=0}^{\infty} (1-\alpha_k^q) \leq c_{\alpha} <\infty$.
\end{assumption}
{\color{red} We also use the  following {\color{red} result for further analysis. }
\begin{theorem} \label{RSthm}
    \cite[Theorem 1]{RS} Let $U_k,\beta_k,\xi_k,\rho_k\geq 0$ be $\mathcal{F}_k$ measurable random variables such that
    $$E(U_{k+1}|\mathcal{F}_k)\leq(1+\beta_k)U_k+\xi_k-\rho_k$$
    If $\sum\beta_k<\infty$ and $\sum\xi_k<\infty$ then $U_k\rightarrow U$ a.s. and $\sum\rho_k<\infty$ a.s.
\end{theorem}}

\begin{theorem}
\label{prop1}
Suppose that {\color{red} A\ref{A1}-A\ref{A5} } and   \eqref{thtbnd} hold.
 Then the sequence $\{\Psi_k\}_{k\in \mathbb{N}}$ converges a.s. and there holds 
\begin{equation} \label{deltaksum} \sum_{k=0}^{\infty}{\color{red} \Delta_k^2}<\infty \quad \mbox{a.s. } 
\end{equation}  

\end{theorem}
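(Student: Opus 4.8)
The plan is to assemble the one-step drift inequality \eqref{psi} from the per-case estimates already obtained, and then to apply the Robbins--Siegmund type result Theorem \ref{RSthm}. First I would note that Assumption \ref{A5} forces $1-\alpha_k^q\to 0$, hence $\alpha_k\to 1$, so there is $\bar\alpha>0$ with $\alpha_k\ge\bar\alpha$ for all $k$; thus the hypotheses of Lemma \ref{prvapomocna} are met (boundedness of $\{x_k\}$ is assumed, \eqref{thtbnd} is assumed, and A\ref{A1}--A\ref{A1new} hold). Conditioning on $S_k$ versus $\bar S_k$: on $\bar S_k$ the update is deterministic given $\F_k$ and \eqref{uns} gives $\Psi_{k+1}-\Psi_k=-c_1\delta_k^2$ with $c_1=(1-\nu)(1-\gamma_1^2)>0$; on $S_k$, Lemma \ref{prvapomocna} gives $E(\Psi_{k+1}-\Psi_k\mid\F_k,S_k)\le -c_6\delta_k^2+c_7(1-\alpha_k^q)$.

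Combining these via the total probability formula,
\begin{align*}
E(\Psi_{k+1}-\Psi_k\mid\F_k)
&= P(S_k\mid\F_k)\,E(\Psi_{k+1}-\Psi_k\mid\F_k,S_k)+P(\bar S_k\mid\F_k)\,E(\Psi_{k+1}-\Psi_k\mid\F_k,\bar S_k)\\
&\le P(S_k\mid\F_k)\bigl(-c_6\delta_k^2+c_7(1-\alpha_k^q)\bigr)+P(\bar S_k\mid\F_k)\bigl(-c_1\delta_k^2\bigr)\\
&\le -\min\{c_1,c_6\}\,\delta_k^2+c_7(1-\alpha_k^q),
\end{align*}
which is \eqref{psi} with $\sigma=\min\{c_1,c_6\}>0$ and $\tilde\sigma=c_7>0$ (if $P(S_k\mid\F_k)=0$ the successful term simply disappears and the bound is trivial). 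Since $\phi$ is bounded below by Assumption \ref{A1}, say $\phi\ge\phi_{\min}$, I would pass to the shifted, nonnegative, $\F_k$-measurable Lyapunov function $\hat\Psi_k:=\Psi_k-\nu\phi_{\min}=\nu(\phi(x_k)-\phi_{\min})+(1-\nu)\delta_k^2\ge 0$, which obeys the same recursion $E(\hat\Psi_{k+1}-\hat\Psi_k\mid\F_k)\le -\sigma\delta_k^2+\tilde\sigma(1-\alpha_k^q)$.

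Then I would apply Theorem \ref{RSthm} with $U_k=\hat\Psi_k$, $\beta_k\equiv 0$, $\xi_k=\tilde\sigma(1-\alpha_k^q)\ge 0$, and $\rho_k=\sigma\delta_k^2\ge 0$: indeed $\sum_k\beta_k=0<\infty$ and, by Assumption \ref{A5}, $\sum_k\xi_k=\tilde\sigma\sum_k(1-\alpha_k^q)\le\tilde\sigma c_\alpha<\infty$. The theorem then gives that $\hat\Psi_k$ converges a.s.\ (hence $\Psi_k$ converges a.s.) and $\sum_k\rho_k=\sigma\sum_k\delta_k^2<\infty$ a.s., which is exactly \eqref{deltaksum}. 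I do not expect a real obstacle: all the analytic content sits in Lemma \ref{prvapomocna}, and the remaining steps are the bookkeeping of the total-probability split, the harmless constant shift needed to make the Lyapunov function nonnegative for Theorem \ref{RSthm}, and the verification that A\ref{A5} supplies both $\sum_k\xi_k<\infty$ and the lower bound $\bar\alpha$ on $\alpha_k$.
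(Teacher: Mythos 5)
Your proposal is correct and follows essentially the same route as the paper: the same total-probability split over $S_k$ and $\bar S_k$ using \eqref{uns} and Lemma \ref{prvapomocna}, the same observation that A\ref{A5} yields a uniform lower bound $\bar\alpha$ on $\alpha_k$, and the same application of Theorem \ref{RSthm} after shifting the Lyapunov function by its lower bound (the paper subtracts $\Psi^*$, you subtract $\nu\phi_{\min}$, which is the same device). No gaps.
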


\noindent

\begin{proof} Assumption \ref{A5} implies that $\lim_{k \to \infty} \alpha_k=1$, so without loss of generality we can assume that $\alpha_k \geq \bar{\alpha}>0$ for all $k$. Then, according to \eqref{uns} and Lemma \ref{prvapomocna} we obtain 
    \begin{eqnarray}\nonumber
    \label{newEN}
        & & E(\Psi_{k+1}-\Psi_k|\mathcal{F}_k)\\\nonumber 
        &=& E(\Psi_{k+1}-\Psi_k|\mathcal{F}_k,S_k)P(S_k|\mathcal{F}_k)+E(\Psi_{k+1}-\Psi_k|\mathcal{F}_k,\bar{S}_k)P(\bar{S}_k|\mathcal{F}_k)\\\nonumber 
        &\leq& (-c_6\Delta_k^2 +c_7 (1-\alpha_k^q))P(S_k|\mathcal{F}_k)-c_1 \Delta_k^2 P(\bar{S}_k|\mathcal{F}_k)\leq\\\nonumber
        &\leq& -\min\{c_1,c_6\}( P(S_k|\mathcal{F}_k)+P(\bar{S}_k|\mathcal{F}_k))\Delta_k^2+c_7 (1-\alpha_k^q)\\
        &=:& -c_8 \Delta_k^2+c_7 (1-\alpha_k^q)
    \end{eqnarray}
Since $\Psi_k$ is bounded from bellow by $\Psi^*$, by adding and subtracting $\Psi^*$ in the conditional expectation above and using the fact that $\Psi_k$ is $\F_k$-measurable we obtain
$$E(\Psi_{k+1}-\Psi^*|\mathcal{F}_k)\leq\Psi_k-\Psi^*-c_8 \Delta_k^2+c_7 (1-\alpha_k^q)$$
and the result follows from Theorem \ref{RSthm}.
\end{proof}

Now we show that under the {\color{red}stated} conditions a.s. there exists an infinite sequence of iterations with fully linear models. 
{\color{red} We employ the following  result.} 
{\color{red}
\begin{theorem} {\color{red}\cite[Theorem 5.3.1]{DUR}. } \label{mart}
\cite{DUR} Let $G_k$ be a sequence of integrable random variables such that  $E(G_k | {\cal{V}}_{k-1})\geq G_{k-1}$, 
where ${\cal{V}}_{k-1}$ is a $\sigma$-algebra generated by $G_0,...,G_{k-1}$. Assume further that $|G_k-G_{k-1}|\leq M<\infty$ for every $k$. Consider the random events $C=\{\lim_{k\rightarrow\infty}G_k$ exists and is finite$\}$ and $D=\{\limsup_{k\rightarrow\infty} G_k=\infty\}$. Then $P(C \cup D)=1$
\end{theorem}
}

\begin{theorem} 
\label{infFL} 
Suppose that the assumptions of Theorem \ref{prop1} hold. Then a.s. there exists an infinite $K \subseteq \mathbb{N}$ such that 
{\color{red} $\tilde{M}_{k,i}$ is $(c_f,c_g)$  fully linear model of $f_i$ on  $B(X_k,\Delta_k)$ for all $i=1,...,q$ } and all $k \in K$. 
\end{theorem}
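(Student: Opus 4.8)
The plan is to argue by contradiction. Suppose that with positive probability only finitely many iterations have fully linear models; then on that event there is an index $k_0$ after which the model $\tilde m_k$ is never fully linear, i.e. the event $\bar I_k$ occurs for every $k\geq k_0$. The key observation is that once models are not fully linear, the condition $\omega_m(x_k)>\Theta\delta_k$ together with non-full-linearity cannot both be controlled in our favor, but what we \emph{can} exploit is the behavior of the trust-region radius. Recall from Theorem \ref{prop1} that $\sum_k \delta_k^2<\infty$ a.s., so in particular $\delta_k\to 0$ a.s. I would combine this with the update rule in Step 3: on an unsuccessful iteration $\delta_{k+1}=\gamma_1\delta_k$ (decrease), and on a successful iteration $\delta_{k+1}=\min\{\delta_{\max},\gamma_2\delta_k\}$ with $\gamma_2=1/\gamma_1>1$ (increase). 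So the radius can only shrink through unsuccessful steps.

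The heart of the argument is to show that a successful iteration cannot occur infinitely often \emph{without} infinitely many fully linear models interleaved in a way that forces $K$ to be infinite; more precisely, I would show that if $\bar I_k$ holds for all $k\geq k_0$ and $\delta_k\to 0$, we reach a contradiction with $x_k$ staying bounded and the Lyapunov decrease. Here is the mechanism I would use: since $\delta_k\to 0$, pick $k_1\geq k_0$ such that for $k\geq k_1$ we have $\delta_k$ so small that the bound \eqref{deltabound} of Lemma \ref{thm} would be satisfied \emph{if} the model were fully linear — but the model is not fully linear, so Lemma \ref{thm} does not directly apply. Instead I would look at the step-acceptance test directly. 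The subtlety is: even on non-fully-linear iterations, $\rho_k$ can be computed and the algorithm may accept or reject. The clean line is to track how $\delta_k$ evolves. Since $\sum_k\delta_k^2<\infty$, the number of successful iterations among $k\geq k_1$ must be finite OR the radius keeps getting pushed down by unsuccessful ones; either way $\delta_k\to 0$ geometrically-fast along unsuccessful runs. I would then invoke the criticality-type contradiction: $\delta_k\to 0$ forces (by Step 3's alternative $x_{k+1}=x_k$, $\delta_{k+1}=\gamma_1\delta_k$) that eventually $\omega_m(x_k)\leq\Theta\delta_k$ must fail to produce successful steps, hence all sufficiently large iterations are unsuccessful, hence $\delta_k=\gamma_1^{k-k_2}\delta_{k_2}\to 0$, which is consistent — so I actually need the reverse: I must produce a \emph{successful} step with a fully linear model.

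The cleaner route, which I would adopt, mirrors the single-objective argument of \cite{CH,VS}. Assume for contradiction that a.s. on a positive-probability event only finitely many models are fully linear. On that event, for $k\geq k_0$ we are in $\bar I_k$. Now use Assumption \ref{A5}: $\sum_k(1-\alpha_k^q)<\infty$, so $\sum_k \Pr(\bar I_k\mid\F_k)\leq \sum_k(1-\alpha_k^q)<\infty$. By the (conditional) Borel--Cantelli lemma / Lévy's extension, $\Pr(\bar I_k \text{ i.o.})=0$ on the event that $\sum_k \mathbf 1_{\bar I_k}$ diverges would be contradicted; more precisely, $\sum_k \Pr(\bar I_k\mid\F_k)<\infty$ a.s. implies $\sum_k \mathbf 1_{\bar I_k}<\infty$ a.s., i.e. only finitely many iterations have non-fully-linear models, hence all but finitely many $k$ lie in $K$, so $K$ is infinite a.s. I would state this as the main step: apply the second Borel--Cantelli / martingale lemma (Theorem \ref{mart} is available, or the standard conditional Borel--Cantelli) to the indicators of $\bar I_k$ using $\Pr(\bar I_k\mid\F^k)\leq 1-\alpha_k^q$ and Assumption \ref{A5}. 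The main obstacle is making the Borel--Cantelli step rigorous in the conditional setting: one needs the version stating that if $\sum_k \Pr(A_k\mid\F_k)<\infty$ a.s. then $\mathbf 1_{A_k}=1$ only finitely often a.s. (this is a standard consequence of Theorem \ref{RSthm} applied to $U_k=\sum_{j<k}(\mathbf 1_{A_j}-\Pr(A_j\mid\F_j))$, or directly Lévy's conditional Borel--Cantelli). I would therefore write: set $A_k=\bar I_k$; then $\E(\mathbf 1_{A_k}\mid\F_k)=\Pr(\bar I_k\mid\F_k)\leq 1-\alpha_k^q$, and since $\sum_k(1-\alpha_k^q)\leq c_\alpha<\infty$ by Assumption \ref{A5}, the partial sums $V_k=\sum_{j=0}^{k-1}\mathbf 1_{A_j}$ form a submartingale-minus-summable-compensator, so by Theorem \ref{RSthm} (with $U_k=V_k$, $\beta_k=0$, $\xi_k=\Pr(A_k\mid\F_k)$, $\rho_k=0$) $V_k$ converges a.s. to a finite limit, which forces $\sum_k\mathbf 1_{\bar I_k}<\infty$ a.s. Hence the complementary set $K=\{k:I_k\text{ holds}\}=\{k:\tilde m_k\text{ fully linear}\}$ is infinite a.s., completing the proof.
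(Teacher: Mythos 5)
Your final argument is correct, but it takes a genuinely different route from the paper. The paper defines a random walk $W_k=\sum_s V_s$ with $V_s=\pm 1$ according to whether $I_s$ occurs, notes that $E(V_{k+1}\mid\mathcal{F}_k)\geq 2\alpha_k^q-1>0$ once $\alpha_k^q>1/2$, and applies the bounded-increment submartingale dichotomy (Theorem \ref{mart}) to get $\limsup_k W_k=\infty$ a.s., which is incompatible with $I_k$ occurring only finitely often; this only uses $\alpha_k^q$ eventually exceeding $1/2$, and the walk $W_k$ is reused later in the proof of Theorem \ref{liminf} to count trust-region increases. You instead exploit the full strength of Assumption \ref{A5}: since $\Pr(\bar I_k\mid\mathcal{F}_k)\leq 1-\alpha_k^q$ and $\sum_k(1-\alpha_k^q)<\infty$, a conditional Borel--Cantelli argument (which you correctly implement through Theorem \ref{RSthm} with $U_k=\sum_{j<k}\mathbf{1}_{\bar I_j}$, $\beta_k=\rho_k=0$, $\xi_k=\Pr(\bar I_k\mid\mathcal{F}_k)$, noting that a convergent nondecreasing integer-valued sequence is eventually constant) yields the strictly stronger conclusion that a.s. all but finitely many models are fully linear, so $K$ is cofinite and in particular infinite. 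Both arguments share the same mild measurability looseness (the event $I_k$ is not $\sigma(x_0,\dots,x_k)$-measurable, so one should really work with a filtration enriched by the model randomness), so you are at the same level of rigor as the paper there. What each buys: your route is shorter and gives a stronger statement under A\ref{A5}; the paper's random-walk route is more robust, surviving a weakening of A\ref{A5} to $\alpha_k^q$ merely bounded above $1/2$, and produces the $\limsup W_k=\infty$ property that the paper needs again later. Note also that the first two paragraphs of your proposal (the attempted contradiction via trust-region dynamics and Theorem \ref{prop1}) are inconclusive and play no role in your final argument; only the last paragraph constitutes the proof.
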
 
\begin{proof} Notice that assumption A\ref{A5} implies the existence of $\bar{k}$ such that $\alpha_k^q> 0.5$ for all $k \geq \bar{k}$. 
Let us define a random variable 
\begin{equation}
\label{randomwalk}
    W_k=\sum_{s=\bar{k}}^{k} V_s,
\end{equation}
where {\color{red} $V_k|I_k, \F_k=1$  and $V_k| \bar{I}_k, \F_k=-1$ } otherwise. Moreover, 
\begin{eqnarray*}\nonumber
    E(V_{k+1} | \F_k)&=& P(I_k| \F_k)-P(\bar{I}_k| \F_k)=P(I_k| \F_k)-(1-P(I_k| \F_k))\\
    &=& 2 P(I_k| \F_k)-1\geq 2 \alpha_k^q-1>0.
    \end{eqnarray*} 
This implies  
$E(W_{k+1} | \F_k)=W_k+ E(V_{k+1}| \F_k)> W_k$.  We also have  $|W_{k+1}-W_k|=|V_{k+1}|=1$ and thus the  conditions of Theorem \ref{mart} are satisfied with $G_k=W_k$ and $M=1$.  Moreover, $|W_{k+1}-W_k|=1$ also indicates that the sequence of $W_k$ cannot be convergent and thus Theorem \ref{mart}  implies that a.s.
\begin{equation} \label{wkinf}
    \limsup_{k \to \infty} W_k=\infty. 
\end{equation}
The statement to be proved is that  $I_k$ happens infinitely many times a.s. Assume that this is not true. Then, there exists $\tilde{k}$ such that for each $k \geq \tilde{k}$ the event $\bar{I}_k$ happens so $ V_k  =-1. $ As $ W_k = W_{\tilde{k}} + (k-\tilde{k})V_k $ we get $\lim_{k \to \infty} W_k=-\infty$, which is in contradiction with  \eqref{wkinf}.

\end{proof}

\begin{theorem}
\label{liminf}
Suppose that the assumptions of Theorem \ref{prop1} hold. Then a.s.
$$\liminf_{k\rightarrow\infty}\omega(X_k)=0$$ 
\end{theorem}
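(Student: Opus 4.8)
The plan is to argue by contradiction, reusing the martingale that already appears in the proof of Theorem \ref{infFL}. Suppose $P(\liminf_{k\to\infty}\omega(X_k)>0)>0$. Then there is $\epsilon>0$ for which the event $E:=\{\liminf_{k\to\infty}\omega(X_k)>2\epsilon\}$ has positive probability; on $E$ there is a trajectory‑dependent index $k_0$ with $\omega(X_k)>2\epsilon$ for all $k\ge k_0$. Since $\sum_k\delta_k^2<\infty$ a.s.\ by Theorem \ref{prop1}, we have $\delta_k\to0$ a.s.; and, exactly as established in the proof of Theorem \ref{infFL}, the partial sums $W_k=\sum_{j=\bar k}^{k}V_j$, with $V_j=1$ when the model at iteration $j$ is fully linear (i.e.\ $I_j$ holds) and $V_j=-1$ otherwise, satisfy $\limsup_{k\to\infty}W_k=\infty$ a.s. Discarding a null set, I may assume $\delta_k\to0$ and $\limsup_k W_k=\infty$ hold on $E$ as well.

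Next I would show that on $E$ every fully linear iteration is eventually successful and hence enlarges the radius. Fix $\omega\in E$. By Lemma \ref{margfun}, on any iteration with a fully linear model $\omega_m(X_k)\ge\omega(X_k)-c_g\delta_k$, so since $\delta_k\to0$ there is $k_1\ge k_0$ such that the occurrence of $I_k$ with $k\ge k_1$ forces $\omega_m(X_k)>\epsilon$. Using $\delta_k\to0$ once more, there is $k_2\ge k_1$ with $\delta_k\le\min\{\epsilon/c_b,\ \epsilon(1-\eta_1)/(2c_{\tilde{\Phi}}),\ \epsilon/\Theta\}$ for every $k\ge k_2$. For such $k$, if $I_k$ occurs then $\omega_m(X_k)>\epsilon$ gives $\delta_k\le\min\{\omega_m(X_k)/c_b,\ \omega_m(X_k)(1-\eta_1)/(2c_{\tilde{\Phi}})\}$, so Lemma \ref{thm} (together with Lemma \ref{sufred}, which guarantees a step satisfying \eqref{Cauchy}) yields $\rho_k\ge\eta_1$, while simultaneously $\omega_m(X_k)>\epsilon>\Theta\delta_k$; hence the acceptance test in Step 3 is passed, so $X_{k+1}=X_k+d_k$ and $\delta_{k+1}=\min\{\delta_{\max},\gamma_2\delta_k\}=\gamma_2\delta_k$, the cap being inactive for $k$ large because $\delta_k\to0$. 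Therefore there is a finite trajectory‑dependent index $K^*\ge\max\{\bar k,k_2\}$ such that for every $k\ge K^*$ on $E$: $\log\delta_{k+1}=\log\delta_k+\log\gamma_2$ whenever $I_k$ holds, while $\delta_{k+1}\in\{\gamma_1\delta_k,\gamma_2\delta_k\}$ whenever $\bar I_k$ holds, so $\log\delta_{k+1}\ge\log\delta_k-\log\gamma_2$. Recalling $\gamma_1=1/\gamma_2$, in both cases
$$\log\delta_{k+1}\ \ge\ \log\delta_k+(\log\gamma_2)\,V_k .$$

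Summing this inequality telescopically from $K^*$ onward gives, on $E$, $\log\delta_N\ge\log\delta_{K^*}+(\log\gamma_2)(W_{N-1}-W_{K^*-1})$ for all $N>K^*$. Since $\log\gamma_2>0$ and $\limsup_k W_k=\infty$ on $E$, this forces $\limsup_N\log\delta_N=\infty$ on $E$, contradicting the deterministic bound $\delta_k\le\delta_{\max}$ imposed in Step 3. Hence $P(E)=0$; as $\epsilon>0$ was arbitrary, taking the union over $\epsilon=1/m$, $m\in\mathbb{N}$, yields $P(\liminf_{k\to\infty}\omega(X_k)>0)=0$, and combined with $\omega\ge0$ (Lemma \ref{lmarginal}(a)) this is exactly $\liminf_{k\to\infty}\omega(X_k)=0$ a.s.

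The analytic content is light: the decrease estimates are no longer needed, and everything rests on Lemmas \ref{margfun}, \ref{thm} and \ref{sufred} plus the two almost sure facts $\delta_k\to0$ and $\limsup_k W_k=\infty$. I expect the only delicate point to be the bookkeeping around the random index $K^*$: the recursion relating $\log\delta_{k+1}$ and $\log\delta_k$ holds only past a trajectory‑dependent time, so the contradiction must be drawn pointwise on all of $E$ from the two a.s.\ statements, without conditioning on $E$ when invoking the martingale. It is also worth checking explicitly that on $\bar I_k$ no control of $\rho_k$ is required — an accepted step there can only push $\delta_{k+1}$ up to $\gamma_2\delta_k$, which is precisely the direction preserving the displayed inequality.
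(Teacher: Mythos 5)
Your proposal is correct and follows essentially the same route as the paper's proof: contradiction on a positive-probability event where $\omega(X_k)$ stays bounded away from zero, using $\delta_k\to0$, Lemma \ref{margfun} and Lemma \ref{thm} to show that every sufficiently late fully linear iteration is successful, and then playing the random walk $W_k$ (with $\limsup_k W_k=\infty$ from the proof of Theorem \ref{infFL}) against the boundedness of the radius. The only cosmetic difference is the bookkeeping variable: the paper tracks $r_k=\log_{\gamma_2}(b^{-1}\delta_k)$ and contradicts $r_k<0$, while you telescope $\log\delta_k$ directly and contradict $\delta_k\le\delta_{\max}$ — the two are equivalent.
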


\begin{proof} 
{\color{red} Recall that $\Omega$ stands for the set of all possible sample paths of SMOP algorithm.}  Suppose the contrary, that with positive probability none of the subsequences {\color{red} of $\{\omega(X_k(v))\}_{k \in \mathbb{N}}$ } converges to 0. In other words there exists $\hat{\Omega}\subset\Omega$ such that $P(\hat{\Omega})>0$ and {\color{red} $\{\omega(X_k(v))\}_{k \in \mathbb{N}}$} is bounded away from zero for all $v\in\hat{\Omega}$. 

Let us observe an arbitrary  $v\in\hat{\Omega}$ and the corresponding {\color{red} realization 
$\{\omega(X_k(v))\}_{k \in \mathbb{N}}$.
Under the current assumption we know that there exists $\epsilon(v)>0$ and $k_1(v)$, such that $\omega(X_k(v))\geq\epsilon(v)>0$ for all $ k\geq k_1(v)$. Moreover, Theorem \ref{infFL} implies that there exists $\tilde{\Omega}\subseteq \Omega$ such that  $P(\tilde{\Omega})=1$ and for every $v \in \tilde{\Omega}$ there exists 
$  K(v)\subseteq \mathbb{N}$ such that  for all $ k\in K(v)$,   $\tilde{M}_{k,i}(v)$ are $(c_f,c_g)$  fully linear model of $f_i$ on  $B(X_k (v),\Delta_k(v))$ for all $i=1,...,q$. 

Now, let us observe $\overline{\Omega}:=\tilde{\Omega} \bigcap \hat{\Omega}$. Notice that $P(\overline{\Omega})>0$. Moreover, since $\Delta_k $ tends to 0 a.s. {\color{red}according to Theorem \ref{prop1}}, without loss of generality we can assume that $\lim_{k\rightarrow\infty}\Delta_k(v)=0$ for all $v \in \overline{\Omega}$.

Let  us take an arbitrary $v\in\overline{\Omega}$. 
 There exists $ k_2(v)$ such that for $k\geq k_2(v)$,
\begin{equation}
\label{deltastar}
    \Delta_k(v)< b(v):=\min\{\frac{\epsilon(v)}{2c_g},\frac{\epsilon(v)}{2\Theta},\frac{\epsilon(v)}{2c_{b}},\frac{\epsilon(v)(1-\eta_1)}{4c_{\tilde{\Phi}}}\}
\end{equation}
Let us denote by $\hat{K}(v)$ the set of all indices from $K(v)$ such that $k\geq k_3(v)=\max\{k_1(v),k_2(v)\}$. Thus, for all $k \in \hat{K}(v)$ we have fully linear models,   $\omega(X_k(v))\geq\epsilon(v)$ and $\Delta_k(v)$ is small enough. Furthermore, from Lemma \ref{margfun} and \eqref{deltastar} we obtain, 
$$|\omega(X_k(v))-\omega_m(X_k(v))|\leq c_g\Delta_k(v)\leq\frac{\epsilon(v)}{2}$$
and $\omega_m(X_k(v))\geq\frac{\epsilon(v)}{2}\geq\Delta_k(v)\Theta$. Moreover, for all $k\in\hat{K}(v)$, the condition \eqref{deltabound} 
is satisfied
\begin{eqnarray*}
\Delta_k (v) & < & \min \{\frac{\epsilon(v)}{2c_g},\frac{\epsilon(v)}{2\Theta},\frac{\epsilon(v)}{2c_{b}},\frac{\epsilon(v)(1-\eta_1)}{4c_{\tilde{\Phi}}}\}  \\ 
& \leq & \min\{\frac{\omega_m(X_k(v))}{c_{b}},\frac{\omega_m(X_k(v))(1-\eta_1)}{2c_{\tilde{\Phi}}}\}.
\end{eqnarray*}
Thus we conclude  that ${\cal{R}}_k(v)\geq\eta_1$ which together with $\omega_m(X_k(v))\geq\Delta_k(v)\Theta$  implies  that all  iterations in $\hat{K}(v)$ are successful iterations. Therefore for  all $k\in\hat{K}(v)$ there holds $\Delta_{k+1}(v)=\Delta_k(v)\gamma_2>\Delta_k(v)$.
Let us define 
$$r_k(v):=\log_{\gamma_2}((b(v))^{-1}\Delta_k(v))$$
where $b(v)$ is defined in \eqref{deltastar}.
Notice that for $k\geq k_3(v), \Delta_k(v)<b(v)$, hence $\gamma_2^{r_k(v)}<1$ and   $r_k(v)<0$. Moreover,
$$r_{k+1}(v)=\log_{\gamma_2}((b(v))^{-1}\Delta_{k+1}(v))=\left\{\begin{array}{rcl}r_k(v)+1& if &\Delta_{k+1}(v)=\gamma_2\Delta_k(v)\\r_k(v)-1&  if &\Delta_{k+1}(v)=\frac{\Delta_k(v)}{\gamma_2}\end{array}\right.$$
Therefore, we have $r_{k+1}(v)=r_k(v)+1$ for all $k\in\hat{K}$. Notice that the increase of $r_k(v)$ can also happen  for some  $k\geq k_3(v)$ even if $k \notin \hat{K}(v)$. On the other hand, the increase of  $W_k(v)$ defined in  \eqref{randomwalk} is possible if and only if the models $\tilde{M}_{k,i}(v), i=1,...,q$, are fully linear. That means that for all $k\geq k_3(v)$, the increase $W_{k+1}(v)=W_k(v)+1$ happens only if $k \in \hat{K}(v)$, while in the remaining iterations $k\geq k_3(v), k \notin \hat{K}(v)$  we have $W_{k+1}(v)=W_k(v)-1$.  Thus for all $k>k_3(v)$  the increase of $r_k(v)$ happens in the same or bigger number of iterations than the increase of $W_k(v)$ and we conclude that  the following must hold
$$r_k(v)-r_{k_3}(v)\geq W_k(v)-W_{k_3}(v).$$
Since  \eqref{wkinf} holds almost surely, without loss of generality  we conclude that  $\limsup_{k\rightarrow\infty}W_k(v)=\infty$ and thus 
$\limsup_{k\rightarrow\infty}r_k(v)=\infty$ which contradicts the fact that $r_k(v)<0$ for all $k\geq k_3(v)$.
}
 \end{proof}
 
\begin{proposition} \label{sumdeltak}
Suppose that the assumptions of Theorem \ref{liminf} hold. If there exists an infinite subsequence $K\subseteq \mathbb{N}$ such that $\omega (X_k) \geq \varepsilon>0$ for all $k \in K$ then there holds $$E(\sum_{ k \in K} \Delta_k)<\infty.$$
Moreover,  $\sum_{ k \in K} \Delta_k<\infty$ a. s.
    
\end{proposition}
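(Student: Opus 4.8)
The plan is to re-use the Lyapunov function $\psi_k=\nu\phi(x_k)+(1-\nu)\delta_k^2$ from Theorem~\ref{prop1}, but to squeeze out of it a \emph{first order} (in $\delta_k$) decrease on the iterations where $\omega(x_k)$ stays away from zero, rather than the $O(\delta_k^2)$ decrease exploited so far; this sharpening of the reduction is exactly what upgrades $\sum_k\delta_k^2<\infty$ to finiteness of $\sum_{k\in K}\delta_k$. Fix $\varepsilon>0$ and set
$$b_\varepsilon:=\min\Bigl\{\tfrac{\varepsilon}{2c_g},\,\tfrac{\varepsilon}{2\Theta},\,\tfrac{\varepsilon}{2c_{b}},\,\tfrac{\varepsilon(1-\eta_1)}{4c_{\tilde{\Phi}}},\,\tfrac{\varepsilon}{16c_f},\,\tfrac{\nu\varepsilon}{16(1-\nu)(\gamma_2^2-1)}\Bigr\}.$$
The first step is to observe that on the event $\{\omega(x_k)\ge\varepsilon,\ \delta_k<b_\varepsilon\}\cap I_k$ (model fully linear) the iteration is exactly in the situation of the proof of Theorem~\ref{liminf}: Lemma~\ref{margfun} gives $\omega_m(x_k)\ge\varepsilon/2>\Theta\delta_k$, Lemma~\ref{thm} gives $\rho_k\ge\eta_1$, so the step is accepted; combining the Cauchy reduction \eqref{Cauchy} with full linearity \eqref{fimtil1}--\eqref{fimtil2} yields $\phi(x_{k+1})-\phi(x_k)\le 2c_f\delta_k^2-\tfrac12\omega_m(x_k)\delta_k\le-\tfrac{\varepsilon}{8}\delta_k$, and after absorbing the $(1-\nu)(\gamma_2^2-1)\delta_k^2$ term using $\delta_k<b_\varepsilon$ we obtain $\psi_{k+1}-\psi_k\le-\tfrac{\nu\varepsilon}{16}\delta_k$.

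The second step handles the complementary model event $\bar I_k$: there the step may be accepted and $\phi$ may increase, but by the crude Taylor bound \eqref{bezBk} (Assumption~\ref{A2} and boundedness of the iterates) the increase is at most the constant $c_5$, so $\psi_{k+1}-\psi_k\le c_9:=\nu c_5+(1-\nu)(\gamma_2^2-1)\delta_{\max}^2$. Taking conditional expectation and using $P(I_k\mid\F_k)\ge\alpha_k^q\ge\bar\alpha^q$ and $P(\bar I_k\mid\F_k)\le1-\alpha_k^q$, these two steps combine into
$$E(\psi_{k+1}-\psi_k\mid\F_k)\ \le\ -\tfrac{\nu\varepsilon\bar\alpha^q}{16}\,\delta_k\,\mathbf{1}[\omega(x_k)\ge\varepsilon,\ \delta_k<b_\varepsilon]\ +\ c_9(1-\alpha_k^q)$$
for all large $k$. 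On the indices with $\delta_k\ge b_\varepsilon$ one instead invokes the inequality $E(\psi_{k+1}-\psi_k\mid\F_k)\le-c_8\delta_k^2+c_7(1-\alpha_k^q)$ established in the proof of Theorem~\ref{prop1} and uses $\delta_k^2\ge b_\varepsilon\delta_k$; on the indices with $\omega(x_k)<\varepsilon$ one simply drops the negative term. Altogether, for a single pair $\mu,C>0$ and all large $k$, $E(\psi_{k+1}-\psi_k\mid\F_k)\le-\mu\,\delta_k\,\mathbf{1}[\omega(x_k)\ge\varepsilon]+C(1-\alpha_k^q)$.

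The last step is routine: take total expectations, telescope, and use that $\psi_k\ge\nu\inf_x\phi(x)>-\infty$ (Assumption~\ref{A1}) together with $\sum_k(1-\alpha_k^q)\le c_\alpha$ (Assumption~\ref{A5}) to obtain $\mu\sum_k E\bigl[\delta_k\,\mathbf{1}[\omega(x_k)\ge\varepsilon]\bigr]<\infty$; by Tonelli this reads $E\bigl[\sum_k\delta_k\,\mathbf{1}[\omega(x_k)\ge\varepsilon]\bigr]<\infty$, and since the hypothesised $K$ satisfies $K\subseteq\{k:\omega(x_k)\ge\varepsilon\}$ we conclude $E\bigl[\sum_{k\in K}\delta_k\bigr]<\infty$; the a.s.\ finiteness of $\sum_{k\in K}\delta_k$ then follows because a nonnegative random variable with finite expectation is a.s.\ finite. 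I expect the only genuinely delicate point to be the bookkeeping that makes the ``bad'' $\bar I_k$ contribution harmless: there the Lyapunov increase is $O(1)$, not $O(\delta_k)$, so it survives only because it carries probability at most $1-\alpha_k^q$ and $\sum_k(1-\alpha_k^q)$ converges, while one must simultaneously ensure the $O(\delta_k^2)$ model error does not cancel the $O(\delta_k)$ gain on the good iterations — which is precisely where the smallness threshold $b_\varepsilon$ is needed.
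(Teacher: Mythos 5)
Your argument is correct and rests on the same underlying mechanism as the paper's proof: the Lyapunov function $\psi_k$, a first-order decrease $-c\,\delta_k$ on iterations where the model is fully linear and $\omega(x_k)\geq\varepsilon$ forces $\omega_m(x_k)$ above the acceptance threshold (via Lemma \ref{margfun} and Lemma \ref{thm}), an $O(1)$ increase on $\bar I_k$ weighted by $1-\alpha_k^q$, and finally taking expectations, telescoping, and invoking Assumption \ref{A5} and nonnegativity to pass from $E(\sum_{k\in K}\delta_k)<\infty$ to a.s. finiteness. Where you genuinely diverge is in the bookkeeping: you prove the stronger pointwise drift inequality $E(\psi_{k+1}-\psi_k\mid\F_k)\le-\mu\,\delta_k\,\mathbf{1}[\omega(x_k)\ge\varepsilon]+C(1-\alpha_k^q)$ for all large $k$, using a \emph{deterministic} smallness threshold $b_\varepsilon$ and covering the regime $\delta_k\ge b_\varepsilon$ with the global inequality \eqref{newEN} via $\delta_k^2\ge b_\varepsilon\delta_k$, and then telescope over all indices; the paper instead works only along the subsequence $K$, argues that for $k\in K$ ``sufficiently large'' (a sample-path-dependent threshold, justified by $\delta_k\to0$ a.s. and Lemma \ref{margfun}) one gets \eqref{newIk}, and then telescopes over blocks between consecutive elements of $K$, controlling the intermediate iterations with \eqref{newE2}. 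Your version buys a cleaner passage to total expectations — the good/bad dichotomy is governed by the deterministic constant $b_\varepsilon$ rather than a random ``$k$ large enough so that $\delta_k$ is small and $\omega_m(x_k)\ge\tilde\varepsilon$'' index, which is the one slightly delicate point the paper glosses over when it conditions and then takes expectations uniformly over $k\in K$ — at the modest cost of the extra constants in $b_\varepsilon$ and the additional case split on $\delta_k\ge b_\varepsilon$; both routes then conclude identically from boundedness of $\psi$ below, $\sum_k(1-\alpha_k^q)\le c_\alpha$, and Tonelli.
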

\begin{proof}
Let us observe iterations $k \in K$. {\color{red} We analyze the two possible scenarios regarding full linearity separately.} 

{\color{red} Let us consider $E(\Psi_{k+1}-\Psi_k|I_k, \mathcal{F}_k)$ first.  
According to \eqref{deltaksum} we have $\lim_{k \to \infty} \Delta_k=0$ a.s.  which, conditioning on  $I_k$, } together with Lemma \ref{margfun} implies the existence of $\tilde{\varepsilon}>0$ such that $\omega_{m}(X_k)\geq \tilde{\varepsilon}$ for each $k \in K$ sufficiently large. The above further implies that $\omega_{m}(X_k)\geq \Theta \Delta_k>c_b\Delta_k$  for each $k \in K$ sufficiently large, and thus, due to Lemma \ref{thm}, ${\cal{R}}_k \geq \eta_1$ for each $k \in K$ sufficiently large. Without loss of generality, let us assume that $K$ contains only those sufficiently large {\color{red} $ k $ } such that all the above holds. {\color{red} Then,  an arbitrary iteration  $k \in K$ under $I_k$ is a successful iteration of SMOP. In other words, $I_k $ implies $S_k$.  Thus, due to \eqref{final}, for each $k \in K$ there holds 
\begin{eqnarray*}
    \label{finalnew}
&&E(\phi(X_{k+1}) - \phi(X_k)|I_k,\F_k) \\\nonumber 
& \leq & E(2 c_f \Delta_k^2 - \frac{1}{2} w_m(X_k) \min \{\Delta_k,\frac{w_m(X_k)}{c_b}\} |I_k,\mathcal{F}_{k})\nonumber\\
& \leq & 2 c_f \Delta_k^2 - \frac{1}{2} \tilde{\varepsilon} \Delta_k =-\Delta_k (\frac{\tilde{\varepsilon}}{2} -  2 c_f \Delta_k).
\end{eqnarray*}

Once again, assuming that $k \in K$ are all sufficiently large, we obtain $E(\phi(X_{k+1}) - \phi(X_k)|I_k,\F_k)\leq -c_9 \Delta_k$, where $c_9=\frac{\tilde{\varepsilon}}{4} >0$, and thus we conclude 
\begin{eqnarray*} \label{newIk}
E(\psi_{k+1}-\psi_k|I_k,\F_k)&=&E(\nu(\phi(X_{k+1})-\phi(X_k))+(1-\nu)(\gamma_2^2-1)\Delta_k^2|I_k,\F_k)\\\nonumber
&\leq&  -c_{10} \Delta_k+c_{11} \Delta_k^2,
\end{eqnarray*}
}
where $c_{10}=\nu c_9>0$, and $c_{11}=(1-\nu)(\gamma_2^2-1)>0$. 

{\color{red} Now, let us consider $E(\Psi_{k+1}-\Psi_k|\bar{I}_k, \mathcal{F}_k)$. Considering \eqref{c1c} and \eqref{bezBkpsi} we conclude that \begin{equation*} \label{newIkbar} E(\phi(x_{k+1})-\phi(x_k) |\bar{I}_k, \mathcal{F}_k)\leq c_5
 \end{equation*} and thus 
 \begin{equation*} \label{bezBkpsinew} E(\Psi_{k+1}-\Psi_k|\bar{I}_k, \mathcal{F}_k)\leq c_5 +(1-\nu)(\gamma_2^2-1)\Delta_k^2=c_5+c_{11}\Delta_k^2.
 \end{equation*}}

 Now, combining both cases regarding $I_k$ we conclude that for all $k \in K$ there holds 
 \begin{eqnarray*}
     \label{newK}
     E(\Psi_{k+1}-\Psi_k| \mathcal{F}_k)&=& P(I_k| \mathcal{F}_k )E(\Psi_{k+1}-\Psi_k|I_k, \mathcal{F}_k)\\\nonumber
     &+& P(\bar{I}_k| \mathcal{F}_k )E(\Psi_{k+1}-\Psi_k|\bar{I}_k, \mathcal{F}_k)\\\nonumber &\leq& P(I_k| \mathcal{F}_k )(-c_{10} \Delta_k+c_{11} \Delta_k^2)\\\nonumber
     &+& P(\bar{I}_k| \mathcal{F}_k )(c_5+c_{11} \Delta_k^2)\\\nonumber &\leq& -\bar{\alpha}^q c_{10} \Delta_k+c_{11} \Delta_k^2+c_5 (1-\alpha_k^q)\\\nonumber &=:& -c_{12} \Delta_k+c_{11} \Delta_k^2+c_5 (1-\alpha_k^q).
 \end{eqnarray*}
 where $c_{12}=\bar{\alpha}^q c_{10}>0.$
 Applying the unconditional expectation we conclude that for all $k \in K$ there holds 
 \begin{equation*}
     \label{newE1}
     E(\Psi_{k+1}-\Psi_k)\leq -c_{12} E(\Delta_k)+c_{11} E(\Delta_k^2)+c_5 (1-\alpha_k^q).
 \end{equation*}
 On the other hand, \eqref{newEN} holds in all the iterations $k \in \mathbb{N}$ and applying the expectation we obtain 
 \begin{equation*}
     \label{newE2}
     E(\Psi_{k+1}-\Psi_k)\leq -c_8 E(\Delta_k^2) +c_7  (1-\alpha_k^q)\leq c_7  (1-\alpha_k^q).
 \end{equation*}
 Let us denote $\{k\}_{k \in K}=\{k_{(j)}\}_{j \in \mathbb{N}}$. Then, for each $j \in \mathbb{N}$ there holds 
 \begin{eqnarray*}
     \label{newKj}
     E(\Psi_{k_{(j+1)}}-\Psi_{k_{(j)}})&=& E(\Psi_{k_{(j)}+1}-\Psi_{k_{(j)}})+\sum_{i=k_{(j)+1}}^{k_{(j+1)}-1}E(\Psi_{i+1}-\Psi_{i})\\\nonumber
     &\leq & -c_{12} E(\Delta_{k_{(j)}})+c_{11} E(\Delta_{k_{(j)}}^2)+c_5 (1-\alpha_{k_{(j)}}^q)
     \\\nonumber
     &+ & c_7 \sum_{i=k_{(j)+1}}^{k_{(j+1)}-1}(1-\alpha^q_i)\\\nonumber
     &\leq & -c_{12} E(\Delta_{k_{(j)}})+c_{11} E(\Delta_{k_{(j)}}^2)
     \\\nonumber
     &+ & c_{13} \sum_{i=k_{(j)}}^{k_{(j+1)}-1}(1-\alpha^q_i),
 \end{eqnarray*}
 where $c_{13}=\max\{c_5, c_7\}$.
 Therefore, for every $m \in \mathbb{N}$ there holds 
 \begin{eqnarray*}
     \label{newEm}
     E(\Psi_{k_{(m)}}-\Psi_{k_{(0)}})\leq 
     -c_{12} E(\sum_{j=0}^{m-1} \Delta_{k_{(j)}})+ c_{11} E(\sum_{k=0}^{\infty}\Delta_k^2)+c_{13} c_{\alpha}. 
 \end{eqnarray*}
 Letting $m$ tend to infinity and using \eqref{deltaksum} together with the assumption of $\Psi$ being bounded from below, we conclude that 
 $$E(\sum_{k \in K} \Delta_k)= E(\sum_{j=0}^{\infty} \Delta_{k_{(j)}})<\infty.$$
 Finally, assuming that $\sum_{k \in K} \Delta_k=\infty$ with some positive probability yields the contradiction with the previous inequality and we conclude that \\ $\sum_{k \in K} \Delta_k<\infty$ a.s, which completes the proof. 
\end{proof}

\begin{theorem} \label{glavnate}
Suppose that the assumptions of Theorem \ref{liminf} hold. Then a.s. $$\lim_{k\rightarrow\infty}\omega(X_k)=0.$$ 
\end{theorem}
\begin{proof}
Suppose the contrary, that with positive probability there exists a subsequence $\omega(X_k)$ which does not converge to zero. More precisely, there exists $\hat{\Omega}\subset\Omega$ such that $P(\hat{\Omega})>0$  and for all $v \in \hat{\Omega}$ there exist $\epsilon(v)>0$ and $K(v) \subseteq \mathbb{N}$ such that for all $k \in K(v)$ there holds $$\omega(X_k(v)) \geq 2\epsilon(v).$$ 
On the other hand,  Theorem \ref{liminf} implies that {\color{red}  for almost every $v \in \Omega$ there exists    $K_l(v)\subset \mathbb{N}$ such that $\lim_{k \in K_l(v)} \omega(X_k(v))=0$. } Therefore, without loss of generality, we assume that $\omega(X_k(v))<\epsilon(v)$ for all $k \in K_l(v)$ and almost all $v \in \Omega$. {\color{red}Now, let us consider an arbitrary $v \in \hat{\Omega}$.}
Since both $K(v)$ and $K_l(v)$ are infinite, there exists $K_s (v)\subseteq K_l(v)$ such that for each $k \in K_s(v)$ we have both $$\omega(X_k(v))<\epsilon(v) \quad \mbox{and } \quad \omega(X_{k+1}(v))\geq\epsilon(v).$$ In other words, we observe the subsequence $K_s(v)$ of $K_l(v)$ such that $k \in K_l(v)$ and the subsequent iteration does not belong to $K_l(v)$, i.e., $k+1 \notin K_l(v)$. Furthermore, let us observe the pairs 
$(k_{j,1}(v),k_{j,2}(v)), j=1,2,...$, where $k_{j,1}(v)\in K_s(v)$ and $k_{j,2}(v)$ is the first $k >k_{j,1}$ that belongs to $K(v)$, i.e.,  $$\omega(X_{k_{j,1}}(v))<\epsilon(v) \quad \mbox{and } \quad  
\omega(X_{k_{j,2}}(v))\geq 2\epsilon(v), \quad  j=1,2,...$$ 
This also implies that for each $j \in \mathbb{N}$ there holds 
\begin{equation}
\label{newwdist} |\omega(X_{k_{j,1}}(v))-\omega(X_{k_{j,2}}(v))|\geq \epsilon(v).
\end{equation}
{\color{red} for all $v \in \hat{\Omega}$.} Notice that, by the construction of {\color{red} the above } subsequences, $k_{j,1}(v)$ represents the last iteration prior to $k_{j,2}(v)$ such that $\omega(X_{k_{j,1}}(v))<\epsilon(v)$. Therefore, if $k_{j,2} (v)\neq k_{j,1}(v)+1$, for all the intermediate iterations $k \in \{k_{j,1}(v)+1,...,k_{j,2}(v)-1\}$ and all $j \in \mathbb{N}$ there holds 
$$\omega(X_k(v))\geq \epsilon(v).$$
Moreover, Proposition \ref{sumdeltak} implies that  
\begin{equation}
    \label{newpropsum}
    \sum_{j=1}^{\infty}  
    \mbox{ }\sum_{i=k_{j,1}(v)+1}^{k_{j,2}(v)-1}\Delta_i(v) < \infty  
\end{equation}
{\color{red} holds for almost every $v \in \Omega$, and thus almost every $v \in \hat{\Omega}$. }
Notice that  $k_{j,1}(v)$ must be successful iteration of SMOP for all $j \in \mathbb{N}$, since the marginal function changes only when the step is accepted, and thus $\Delta_{k_{j,1}+1}(v)=\gamma_2\Delta_{k_{j,1}}(v)>\Delta_{k_{j,1}}(v)$. Therefore,  for all $j \in \mathbb{N}$, we have 
\begin{eqnarray}
    \label{newk1k2}
&&\|X_{k_{j,1}}(v)-X_{k_{j,2}}(v)\| \\\nonumber 
&=&\|X_{k_{j,1}}(v)-X_{k_{j,1}+1}(v)+X_{k_{j,1}+1}(v)-...-X_{k_{j,2}}(v)\|\\\nonumber &\leq& \sum_{i=k_{j,1}(v)}^{k_{j,2}(v)-1}\|X_i(v)-X_{i+1}(v)\|\leq 
\sum_{i=k_{j,1}(v)}^{k_{j,2}(v)-1}\Delta_i(v)=\Delta_{k_{j,1}}(v)+\sum_{i=k_{j,1}(v)+1}^{k_{j,2}(v)-1}\Delta_i(v)\\\nonumber &\leq&\Delta_{k_{j,1}+1}(v)+\sum_{i=k_{j,1}(v)+1}^{k_{j,2}(v)-1}\Delta_i(v)\leq 2\sum_{i=k_{j,1}(v)+1}^{k_{j,2}(v)-1}\Delta_i(v).
\end{eqnarray}
Thus, summing over $j$ we conclude that  
$\sum_{j=1}^{\infty} \|X_{k_{j,1}}(v)-X_{k_{j,2}}(v)\|< \infty$ for almost every $v \in \hat{\Omega}$
due to \eqref{newpropsum}.  
This further implies that $\lim_{j \to \infty}\|x_{k_{j,1}}(v)-x_{k_{j,2}}(v)\|=0$ {\color{red} for almost every $ v \in \hat{\Omega}$. } However, this further implies that  {\color{red} for almost every $ v \in \hat{\Omega}$ we have  } $\lim_{j \to \infty}|\omega(X_{k_{j,1}}(v))-\omega(X_{k_{j,2}}(v))|=0$  due to Lemma \ref{lmarginal}, d),  which is a contradiction with \eqref{newwdist}. 
\end{proof}

\section{Numerical results}
{\color{red}\subsection{Experiment overview}
 Several experiments are reported in this paper in order to demonstrate the efficiency of the first order SMOP algorithm. The first set of experiments focuses on the machine learning (ML) application, and the concept of model fairness, as discussed in \cite{LV}. In our tests, the problem of minimizing the logistic regression loss function is reformulated into a multi-objective optimization problem by splitting the dataset based on sensitive features. Here, the SMOP algorithm from Section \ref{Alg1} is compared with the deterministic trust region \cite{VOS} (DMOP), and the stochastic multi-gradient \cite{LV} (SMG). Additionally, a version SMOP-S which uses a  subsampling technique which does not satisfy the theoretical assumption is considered. This version proves to be rather efficient as the theoretical bounds are rather demanding. The Pareto front finding technique, which can be seen in \cite{CMV}, \cite{LV} is also employed. This allows the comparison of SMOP/SMOP-S with a deterministic trust region DMOP \cite{VOS}, which is presented using different metrics.

The second set of experiments focuses on multi objective benchmark test problems from \cite{NUM} and \cite{CMV}. Four different problems are considered, each one having a differently shaped Pareto front. Comprehensive representation of the Pareto front for convex, disconnected, mixed, and concave shapes is provided. In this part, a thorough comparison has been made between stochastic and deterministic trust region, and several different metrics have been used to access the quality of the resulting Pareto front.   
\subsection{Experimental configuration}
All considered experimental problems have two component functions that are in the form of a finite sum. Thus, the approximation of function and gradient values is achieved by exploiting its structure using an adaptive subsampling strategy. We demonstrate the first order algorithm, hence we assume  that $H^i_k=0$  for all $i=1,...,q$. Randomly, a subset  $\mathcal{N}_i^k\subseteq\mathcal{N}_i$, $i=1,2$ is sampled following a rule motivated by  the result from \cite[Lemma 4]{RM}. Namely,  for each subgroup  we get $P(|f_i(x_k)-\tilde{f}_i(x_k)|\leq\delta_k^2)\geq\alpha_k$ provided that 
\begin{equation*}
\label{subsamp}
    |\mathcal{N}_i^k|\geq \frac{{F_i(x_k)}^2}{\delta_k^4}\left(1+\sqrt{8\log(\frac{1}{1-\alpha_k})}\right)^2
\end{equation*}
where $ F_i(x_k)$ is the upper bound of $|f_i(x_k)|$.

Although this kind of bound is not easy to obtain in general, for logistic regression problems it is possible to use e.g. 
$$F_i(x_k)=e^{\|x_k\|}\max_{j} \|a_j\| +\log(2)+\frac{\lambda_i}{2} \|x_k\|^2.$$

In our tests, the upper-bound $F_i$ is replaced by a constant, such that $|\mathcal{N}_i^0|=N^{min}_{i}=\max\{0.01|\mathcal{N}_i|,2\}$, hence for SMOP the sample size behaves like 
$$  \mathcal{O}\left(\frac{1}{\delta_k^4}\left(1+\sqrt{8\log(\frac{1}{1-\alpha_k})}\right)^2\right).$$
 The probability parameters are defined as $\alpha_k=\sqrt{1-0.99^k}$, which satisfies theoretical assumptions.  We have set $\delta_{min}=10^{-4}$, $\delta_{max}=8$, $\gamma_1=0.5,\gamma_2=2$ and $\delta_0=1$ in all experiments. This strategy showed large updates in subsampling size, due to $\delta_k^4$ being present in the denominator, hence we introduced SMOP-S, a version of SMOP which controls the increases and decreases of the updates. The subsampling sizes of SMOP-S follow the update rule:
 \begin{equation}
     |\mathcal{N}_i^k|=\max\{\min\{j_kS_i,|\mathcal{N}_i|\},N_{i}^{min}\},
 \end{equation}
 where $\delta^4_k=0.5^{j_k}$, i.e. at each iteration $j_k$ is calculated as $j_k=\log_{2}{\frac{1}{\delta_k^4}}$ for $\delta_k\leq1$, and $S_i= |\mathcal{N}_i|/16$. For $\delta_k \geq 1$ the subsampling size is minimal, $N^{min}_{i}=\max\{0.01|\mathcal{N}_i|,2\}$. 
 This method connects the trust region radius to the stochastic average approximation error, that is, the approximation error follows the movement of $\delta_k$.
 
 The randomization is done by randomly and uniformly shuffling the indices $1,...,|\mathcal{N}_i|$ without replacement at the start, and then slicing the first $N_i^k$ elements at each iteration.

 In Step 2 of Algorithm 1, a descent direction needs to be calculated using approximate function and gradient values. A common strategy in finding a descent direction in multi-objective optimization is to solve the problem
\begin{equation*}
\min_{\beta\in\mathbb{R},d\in\mathbb{R}^n}\beta+\frac{1}{2}\|d\|^2,\quad s.t.\quad \langle \nabla f_i(x_k),d\rangle\leq\beta,\quad i=1,...,q.
\end{equation*}
If $x_k$ is Pareto critical, then the solution is $d_k=0$, $\beta_k=0$, and if it is not Pareto critical, then $\langle \nabla f_i(x_k),d_k\rangle\leq\beta<0$ for all $i$, see \cite[Lemma 1]{FS}.
The dual of this problem can be written as
\begin{equation*}
\min_{c_1,c_2\in\mathbb{R}}\|c_1\nabla f_1(x_k)+c_2\nabla f_2(x_k)\|^2    ,\quad s.t.\quad c_1,c_2\geq 0,\quad c_1+c_2=1,
\end{equation*}
 see \cite[Subsection 7]{FS}. This form offers an easily computable solution for two component functions, hence it is convenient for implementation. 
 Since the true gradients are unknown, by replacing these values with approximations, the dual problem becomes
\begin{equation}
\label{dualapp}
\min_{c_1,c_2\in\mathbb{R}}\|c_1g_1(x_k)+c_2g_2(x_k)\|^2    ,\quad s.t.\quad c_1,c_2\geq 0,\quad c_1+c_2=1.
\end{equation}
This dual problem produces a stochastic multi-gradient direction $d_k=-c^*_1g_1(x_k)-c_2^*g_2(x_k)$, see \cite{LV}. In all of our experiments, scaling this stochastic multi-gradient direction onto the trust region, i.e. using $\frac{d_k}{\|d_k\|}\delta_k$ as the direction, showed good performance. The direction calculated by solving \eqref{dualapp} satisfies \eqref{Cauchy}, since  $\tilde{m}_k(d_k)\leq \tilde{m}_k(d_k^c)$, where $d_k^c$ is the Cauchy direction defined in Lemma \ref{sufred}. Projecting it onto the trust-region may however only achieve a fraction of the Cauchy decrease \eqref{Cauchy}. 

When testing whether the second condition in Step 3 holds, i.e. $\omega_m(x_k)\geq\Theta\delta_k$, it is convenient to check instead if $-\max_{i}\langle g_i(x_k),\frac{d_k}{\|d_k\|}\rangle>\Theta\delta_k$. Since $\omega_m(x_k)\geq-\max_{i}\langle g_i(x_k),\frac{d_k}{\|d_k\|}\rangle$ holds, if the right hand side of the inequality is large enough, the desired condition is implied. This allows SMOP to skip the calculation of $\omega_m(x_k)$ entirely.

The implementation of SMG was done using the configuration showed in \cite{LV} and the code available on GitHub page \cite{gh} of the same authors. For DMOP we used the same configuration as in SMOP.

\subsection{Finding the Pareto front}
Using a front finding technique,  \cite{CMV}, \cite{LV}, we approximate the Pareto front for different problems. First, a set of random points $\mathcal{L}$ is taken as an approximation of the front. At each iteration, it is expanded by generating perturbed points around the existing elements in the set. The predetermined number of iterations of the chosen algorithm (SMOP, SMOP-S or DMOP) is then applied to the existing points, in order to come closer to the set of optimal points, after which the results are also added to the approximation set $\mathcal{L}$. To refine the approximation, all dominated points are removed from the set, leaving only the non dominated points to serve as the Pareto approximation. The point $x\in\mathcal{L}$ is said to be dominated if there exists $y\in\mathcal{L}$, such that $f(y)<f(x)$, i.e. $f_i(y)<f_i(x)$ for $i=1,...,q$.  The procedure is formally described in the following way: \\
\textbf{Algorithm 2.}\cite{CMV}\\
\textit{(Pareto front procedure)}
\begin{itemize}
\item[]\textbf{Step 0}. Generate the initial Pareto front $\mathcal{L}_0.$ Select parameters $n_p,n_q,n_r\in\mathbb{N}$.
\item[]\textbf{Step 1}. Set $\mathcal{L}_{k+1}=\mathcal{L}_{k}$. For each point $x$ in $\mathcal{L}_{k+1}$, add $n_r$ points to $\mathcal{L}_{k+1}$ from the neighborhood of $x$.
\item[]\textbf{Step 2}. For each point $x$ in $\mathcal{L}_{k+1}$, repeat $n_p$ times: Apply $n_q$ iterations of chosen method  with $x$ as a starting point. Add the final iteration to $\mathcal{L}_{k+1}$
\item[]\textbf{Step 3}. Remove all dominated points from $\mathcal{L}_{k+1}$. Set $k=k+1$ and go to Step 1.  
\end{itemize}

The only difference between the procedures is in Step 2, where we choose either SMOP, SMOP-S or DMOP as the underlying algorithm. All three procedures were implemented; however, since  the SMOP-S version was superior to SMOP in terms of time while yielding similar quality fronts, only the comparison between the procedure with SMOP-S and DMOP will be presented and further discussed.  By selecting different configurations $n_p,n_q$ and $n_r$, it is possible to control the resulting front and get either a sparser front with fewer details or a denser one. In our experiments, the following parameters were set the same for SMOP-S and DMOP: $n_q=5, n_p=1, n_r=10$. In order to reduce the number of points generated each iteration, a strategy from \cite{LV} is employed. At Step 1,  $n_r$ points is generated only for the pair of points with largest holes in the Pareto front. This requires sorting values of both component functions, and finding the original indices corresponding to the largest differences of consecutive function values in the sorted list. 

The starting Pareto size was always 30 points, while the exiting criteria was set to be 1500 points, which was always the terminating factor. In both procedures, the true values of the functions were available to determine whether the points are dominated or not in Step 3, however in SMOP-S procedure, when applying $n_q$ iterations in Step 2, the true function values were not visible, i.e. the $n_q$ steps were calculated using the approximated values. For all problems, we measured the full time needed to find the Pareto front. Since the trust region radius was initialized as $\delta_0=1$ everywhere, after 5 iterations of each procedure, it is halved. This helped both SMOP-S and DMOP reach higher quality fronts.

To measure the quality of the approximated front, we have opted for three different metrics, as seen in \cite{CMV}, \cite{LV}. The Purity estimates the percentage of true nondominated points in the Pareto approximation. It does so by comparing the observed front to the combined front of all available Pareto approximations . If $\mathcal{L}_S$ and $\mathcal{L}_D$ are fronts generated by SMOP-S and DMOP respectively, and $\mathcal{L}_{SD}$ is the front we get by removing the nondominated points from $\mathcal{L}_S\cup\mathcal{L}_D$, then the Purity is given by:
\begin{equation}
    \label{smoppurity}
    P_S:=\frac{|\mathcal{L}_S\cap\mathcal{L}_{SD}|}{|\mathcal{L}_S|},\quad P_D:=\frac{|\mathcal{L}_D\cap\mathcal{L}_{SD}|}{|\mathcal{L}_D|}.
\end{equation}
The closer to $1$ this ratio is the better for the procedure, since a higher percentage of points is 'truly' on the Pareto front. The Spread metrics are used to determine how well the points are spread throughout the Pareto front approximation, as the name suggests. The $\Gamma$-spread metric measures the maximum size of the hole in the Pareto front. To calculate it the function values within the front approximations are sorted in a nondecreasing order for both components, $f_i(x^0)\leq ...\leq f_i(x^{|\mathcal{L}|}).$
The measure is then calculated as:
\begin{equation}
\Gamma_S:=\max_{i=1,2}\big(\max_{j=1,...,|\mathcal{L}_S|}l_{i,j}\big),\quad \Gamma_D:=\max_{i=1,2}\big(\max_{j=1,...,|\mathcal{L}_D|}l_{i,j}\big)    
\end{equation}
where $l_{i,j}=f_i(x^{j+1})-f_i(x^j)$.
The second $\Delta$-spread metric measures how well the points are distributed in the approximated front. It is computed as:
\begin{equation}
\Delta_S:=\max_{i=1,2}{\huge (}\frac{l_{i,0}+l_{i,|\mathcal{L}_S|}+\sum_{j=1}^{|\mathcal{L}_S|}|l_{i,j}-\overline{l}_i|}{l_{i,0}+l_{i,|\mathcal{L}_S|}+(|\mathcal{L}_S|-1)\overline{l}_i}{\huge)},
\end{equation}
\begin{equation}
    \Delta_D:=\max_{i=1,2}{\huge (}\frac{l_{i,0}+l_{i,|\mathcal{L}_D|}+\sum_{j=1}^{|\mathcal{L}_D|}|l_{i,j}-\overline{l}_i|}{l_{i,0}+l_{i,|\mathcal{L}_D|}+(|\mathcal{L}_D|-1)\overline{l}_i}{\huge)}
\end{equation}
where $\overline{l}_i$ is the mean of $l_{i,j}$ for the respective procedure.
For both Spread metrics, the lower the value, the better the distribution of the Pareto front is.

\subsection{Machine learning (logistic regression)}
When handling sensitive data in machine learning, there is a significant risk of developing models that exhibit discriminatory behavior. Unfairness emerges when the performance of a model, measured in terms of accuracy or another metric, varies across subgroups of data. Such differences can have real-world consequences, particularly in applications where the subgroups represent actual individuals, such as in hiring systems, healthcare diagnostics, or judicial decision-making.  By splitting the data based on the sensitive attributes and treating the performance on each subgroup as a separate objective, it is possible to train models that are more balanced.  For more on this topic, and different ways to measure fairness, see \cite{FER1},\cite{FER2}, \cite{FER3},\cite{FER4},\cite{FER5}.

The problem we consider is minimization of a regularized logistic regression loss function, as in \cite{LV}:
\begin{equation}
\label{logreg}
    \min_{x}f(x):=\frac{1}{N}\sum_{j\in N}\log(1+e^{(-y_j(x^Ta_j))})+\frac{\lambda}{2}\|\hat{x}\|^2,i=1,2.
\end{equation}
where $x$ represents model coefficients we are trying to find, $\hat{x}$ coefficient vector without the intercept,  $a_j$ the feature vector of $j-th$ sample, $y_j$ its respective label, $N$ the training set size, and $\lambda$ is the regularizer.

In order to create a multi-objective problem, we choose a feature, split the data with respect to it, and create a function for each subgroup. For the sake of simplicity, for each dataset, two subgroups $G_1$ and $G_2$ were made.
The loss functions for such problem are:
$$f_i(x)=\frac{1}{|\mathcal{N}_i|}\sum_{j\in \mathcal{N}_i}\log(1+e^{(-y_j(x^Ta_j))})+\frac{\lambda_i}{2}\|\hat{x}\|^2,i=1,2.$$
where $|\mathcal{N}_i|$ is the size of the $i-th$ 
 and hence the problem becomes:
\begin{equation}
    \label{moplogreg}
    \min_{x}(f_1(x),f_2(x))^T
\end{equation}
In the experiment the regularization was set to  $\lambda_i=10^{-3},$ for $i=1,2,$ as in \cite{LV}.   We consider six datasets which vary in size, namely two larger: covtype and mnist, and four smaller sets: svmguide, german, australian and heart, see \cite{LIB}.
The baseline unfairness is demonstrated by solving the scalar problem \eqref{logreg} and evaluating the logistic regression model on the entire dataset, and on groups $G_1$ and $G_2$ separately. Minimization of \eqref{logreg} was done in Python, using stochastic gradient descent, with maximum of 1000 iterations, and diminishing step size. The following Table \ref{tab1} showcases tested unfairness, and how different Pareto points can create fair models. The "Split" column indicates the critical column of the dataset, by which the split was made. "Full" column shows the accuracies of the scalar logistic regression, trained on the full dataset, and evaluated on the training set. The column "$G_1/G_2$" represents the accuracies of of the same model evaluated on groups $G_1$ and $G_2$ respectively. We also present the accuracies of the multi-objective logistic regression models associated with three representative Pareto critical points produced by the SMOP-S. For each model, the training accuracy is evaluated separately for each group, and the results are reported in distinct rows.   It can be seen that moving around the Pareto front  reduction or increase of disparities between groups is possible for the tested datasets and hence desired group accuracies can be achieved. For larger datasets, the front was more difficult to evaluate, hence for covtype dataset, a minimal reduction of disparity is achieved. The split for australian, heart, svmguide and german was done using the known split technique, as in \cite{LV}, however for mnist and covtype the split has been done by experimentally finding a column which creates groups with accuracy disparity. 
\begin{table}[h!]
\begin{tabular}{|c|c|c|c|c|c|c|c|}
\hline
           & $N_1$/$N_2$                                             & Split & Full & $G_1/G_2$                                               & $P_1$                                                     & $P_2$                                                   & $P_3$                                                   \\ \hline

covtype    & \begin{tabular}[c]{@{}c@{}}174048\\ 406964\end{tabular} & 1     & 92.4\%     & \begin{tabular}[c]{@{}c@{}}84.6\%\\ 95.8\%\end{tabular} & \begin{tabular}[c]{@{}c@{}}83.5\% \\ 96.1\%\end{tabular}          & \begin{tabular}[c]{@{}c@{}}83.8\%\\ 95.8\%\end{tabular}         & \begin{tabular}[c]{@{}c@{}}84.8\%\\ 94.8\%\end{tabular}         \\ \hline
mnist      & \begin{tabular}[c]{@{}c@{}}13919\\ 47\end{tabular}      & 106   & 97.6\%     & \begin{tabular}[c]{@{}c@{}}98.0\%\\ 91.5\%\end{tabular} & \begin{tabular}[c]{@{}c@{}}97.5\% \\ 93.6\%\end{tabular}          & \begin{tabular}[c]{@{}c@{}}97.4\%\\ 95.7\%\end{tabular}         & \begin{tabular}[c]{@{}c@{}}97.1\%\\ 97.5\%\end{tabular}         \\ \hline
svmguide3  & \begin{tabular}[c]{@{}c@{}}1182\\ 61\end{tabular}       & 10    & 82.1\%     & \begin{tabular}[c]{@{}c@{}}83.5\%\\ 55.7\%\end{tabular} & \begin{tabular}[c]{@{}c@{}}83.2\% \\ 65.5\%\end{tabular}  & \begin{tabular}[c]{@{}c@{}}80.1\%\\ 73.7\%\end{tabular} & \begin{tabular}[c]{@{}c@{}}74.3\%\\ 75.4\%\end{tabular} \\ \hline
german     & \begin{tabular}[c]{@{}c@{}}630\\ 370\end{tabular}       & 24    & 78.3\%     & \begin{tabular}[c]{@{}c@{}}80.4\%\\ 74.5\%\end{tabular} & \begin{tabular}[c]{@{}c@{}}80.4\% \\ 76.2 \%\end{tabular} & \begin{tabular}[c]{@{}c@{}}79.2\%\\ 75.9\%\end{tabular} & \begin{tabular}[c]{@{}c@{}}78.8\%\\ 77.2\%\end{tabular} \\ \hline
australian & \begin{tabular}[c]{@{}c@{}}468\\ 222\end{tabular}       & 1     & 86.5\%     & \begin{tabular}[c]{@{}c@{}}84.8\%\\ 90.1\%\end{tabular} & \begin{tabular}[c]{@{}c@{}}85.1\%\\ 91.4\%\end{tabular}   & \begin{tabular}[c]{@{}c@{}}86.5\%\\ 90.1\%\end{tabular} & \begin{tabular}[c]{@{}c@{}}86.9\%\\ 89.1\%\end{tabular} \\ \hline
heart      & \begin{tabular}[c]{@{}c@{}}183\\ 87\end{tabular}        & 2     & 85.5\%     & \begin{tabular}[c]{@{}c@{}}81.9\%\\ 93.1\%\end{tabular} & \begin{tabular}[c]{@{}c@{}}79.7\% \\ 94.1\%\end{tabular}  & \begin{tabular}[c]{@{}c@{}}82.5\%\\ 91.3\%\end{tabular} & \begin{tabular}[c]{@{}c@{}}83.1\%\\ 90.8\%\end{tabular} \\ \hline
\end{tabular}

\caption{Dataset parameters and classification accuracies.}
\label{tab1}
\end{table}

As mentioned previously, we first tested SMOP, against DMOP, SMG and SMOP-S in finding critical points. The performance of algorithms is evaluated by measuring the value of the true marginal function $\omega(x_k)$ \eqref{marginal} in terms of the number of function evaluations FEV and time in seconds.  In SMOP, at each iteration we evaluate the function approximation two times, in point $x_k$ and $x_k+d_k$, hence for each function evaluation of $\tilde{f}$, we account $|\mathcal{N}_1^k|+|\mathcal{N}_2^k|$ evaluations. For SMG, at each iteration the number of function evaluation depends on the line-search technique, and achieving the sufficient descent. Both DMOP and SMG use full function values, hence for these algorithms $|\mathcal{N}_1|+|\mathcal{N}_2|$ is added when the function is evaluated. The number of function evaluation showed to be a dominant factor for large datasets and the most time consuming operation for all algorithms, which can be seen in figures for mnist and covtype. However, for smaller datasets, FEV wasn't the only significant operation. For example, the difference between SMOP and DMOP, was also in the acceptance criteria in Step 3. of Algorithm 1, which involves additional scalar products in the stochastic case.

In this manner, we demonstrate how SMOP and SMOP-S  algorithms achieve significant improvements in performance while utilizing only a fraction of the available data for large datasets, and stay comparable with DMOP in the case of smaller datasets. This highlights the stochastic algorithm effectiveness in leveraging limited resources, making it particularly valuable in scenarios where data collection is expensive. The parameters for SMOP and SMOP-S have been chosen the same for all problems: $\eta_1=0.25,\Theta=0.25$, and the starting point $x_0=(0.1,0.1,...,0.1),$ while the rest of the parameters can be seen in Experiment configuration subsection.
The following Figures \ref{fig1.1}, \ref{fig1.2} and \ref{fig1.3} show the behavior of the four algorithms for the datasets from Table \ref{tab1}. For large datasets, Figure \ref{fig1.1} a) and b), the time agrees with the FEV measure. Here, both SMOP and SMOP-S perform efficiently in terms of FEV and time. For datasets seen in Figure \ref{fig1.2} c) and d), the advantage is visible mostly in terms of FEV. It can be seen that for smallest datasets, Figure \ref{fig1.3} e) and f), both versions of SMOP demonstrate slight advantage in terms of FEV and time at start, but later slow down.  The time needed to reach near optimality for all algorithms for c)-f) is less than 0.005 seconds. The subsampling sizes of SMOP show considerable changes, unlike SMOP-S, where the growth is more controlled.

\begin{figure}[htbp]
\centering

\includegraphics[width=0.82\textwidth]{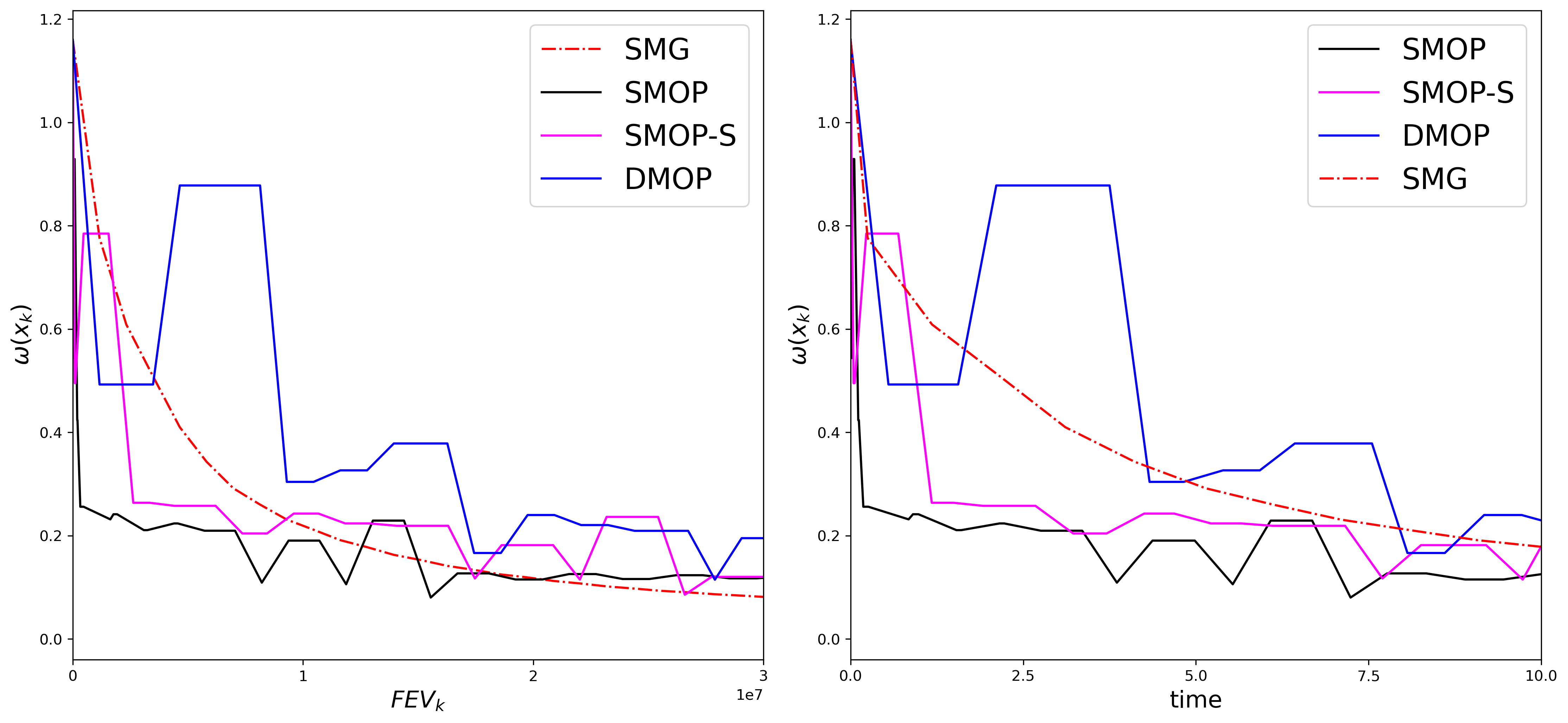} \\
\hspace{-12pt}
\includegraphics[width=0.85\textwidth]{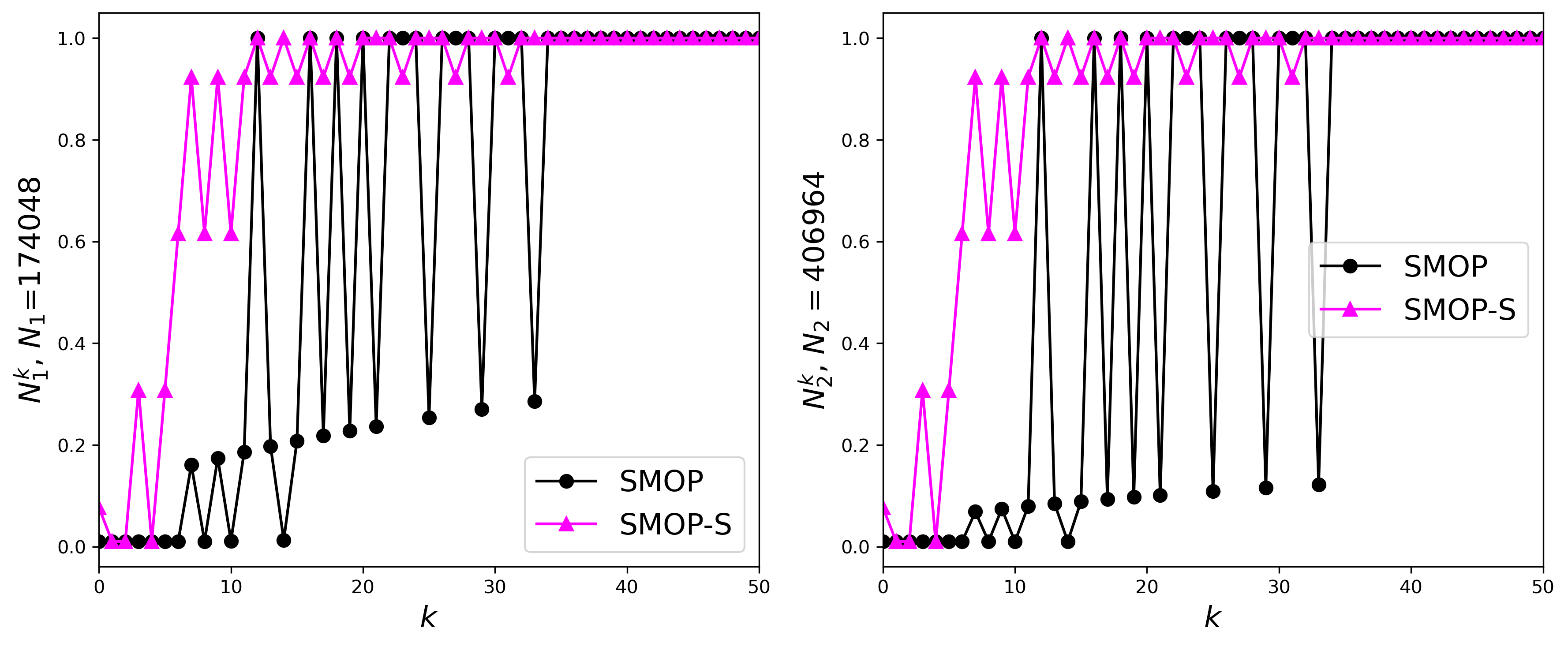} \\
\textbf{a)} \\[0.2cm]

\includegraphics[width=0.82\textwidth]{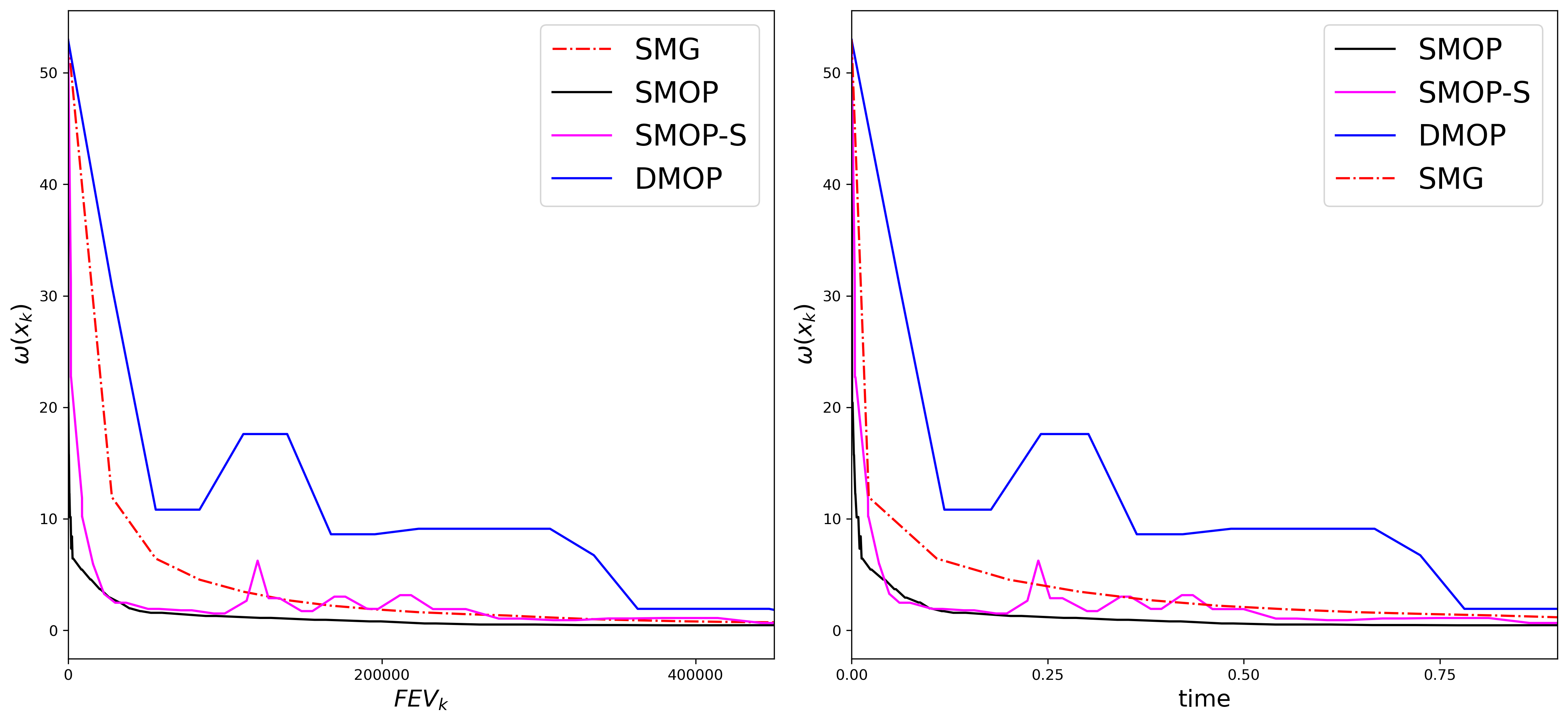} \\
\hspace{-12pt}
\includegraphics[width=0.85\textwidth]{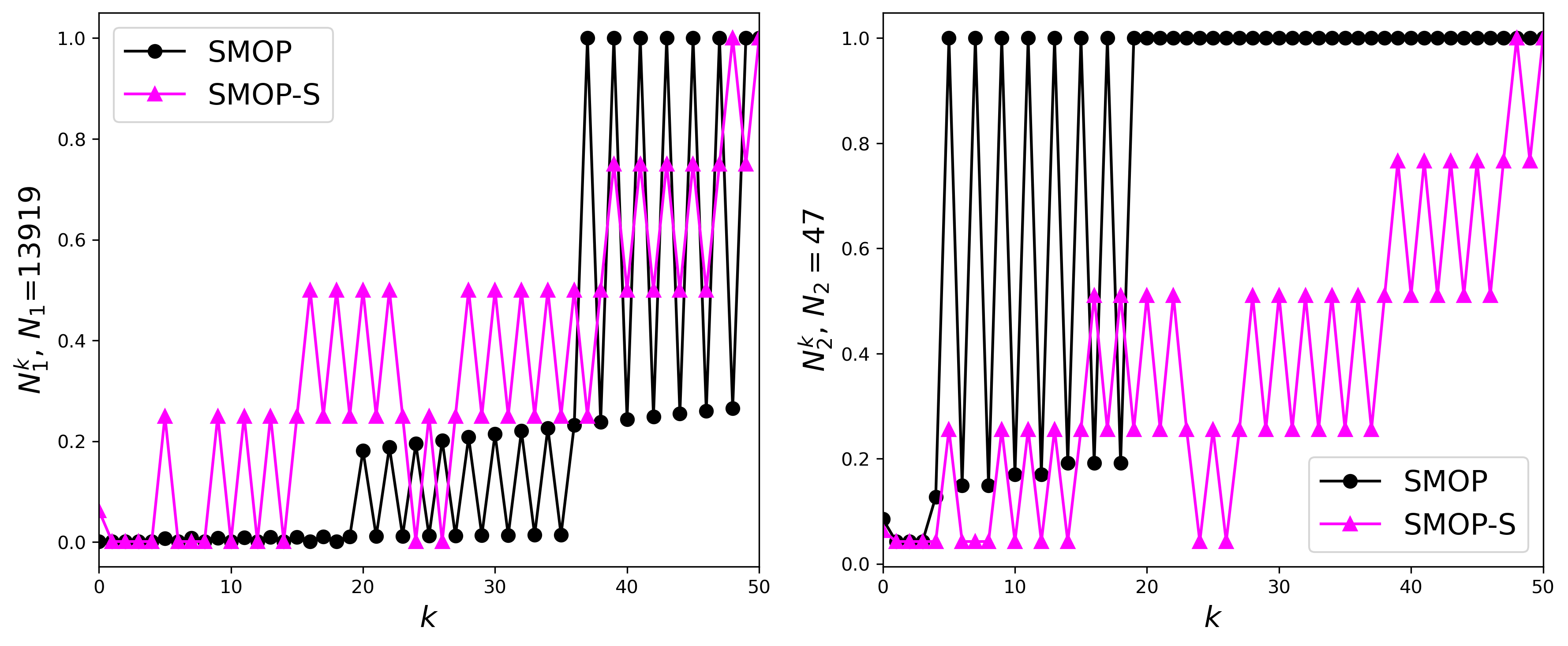} \\
\textbf{b)} \\

\caption{Performance comparison - $\omega(x_k)$ in terms of FEV/time, subsample sizes through iterations for datasets: a) covtype, b) mnist.}
\label{fig1.1}
\end{figure}

\begin{figure}[htbp]
\centering

\includegraphics[width=0.82\textwidth,keepaspectratio]{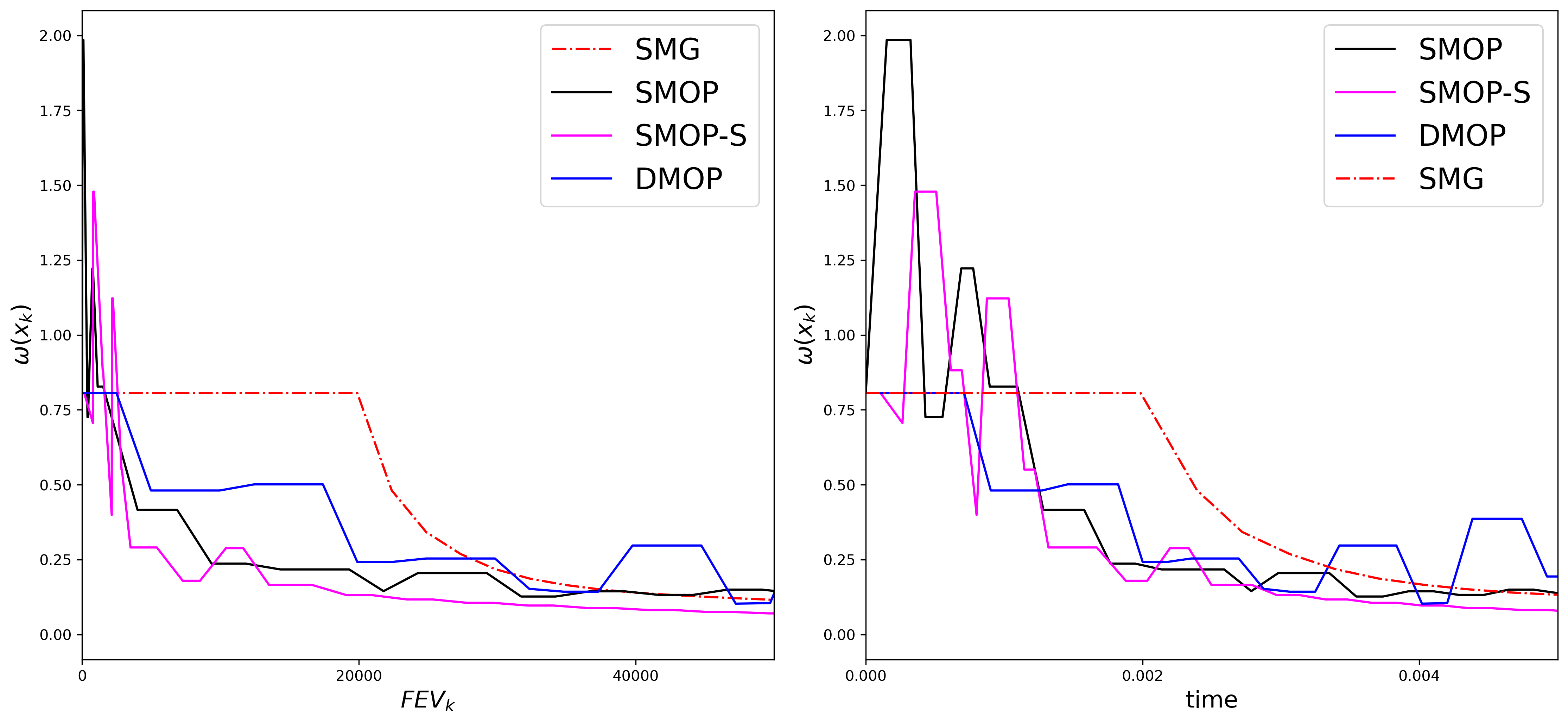} \\
\hspace{-12pt}
\includegraphics[width=0.85\textwidth,keepaspectratio]{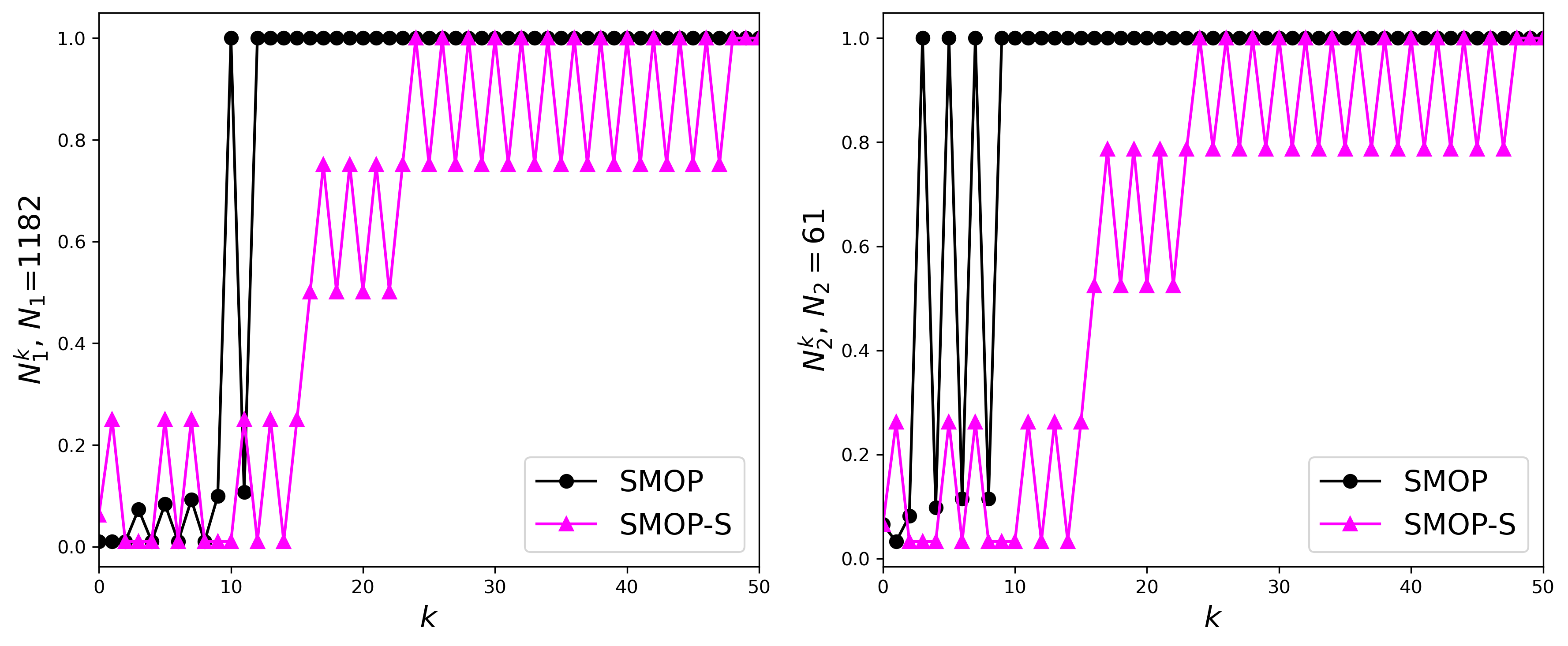} \\
\textbf{c)} \\[0.2cm]

\includegraphics[width=0.82\textwidth,keepaspectratio]{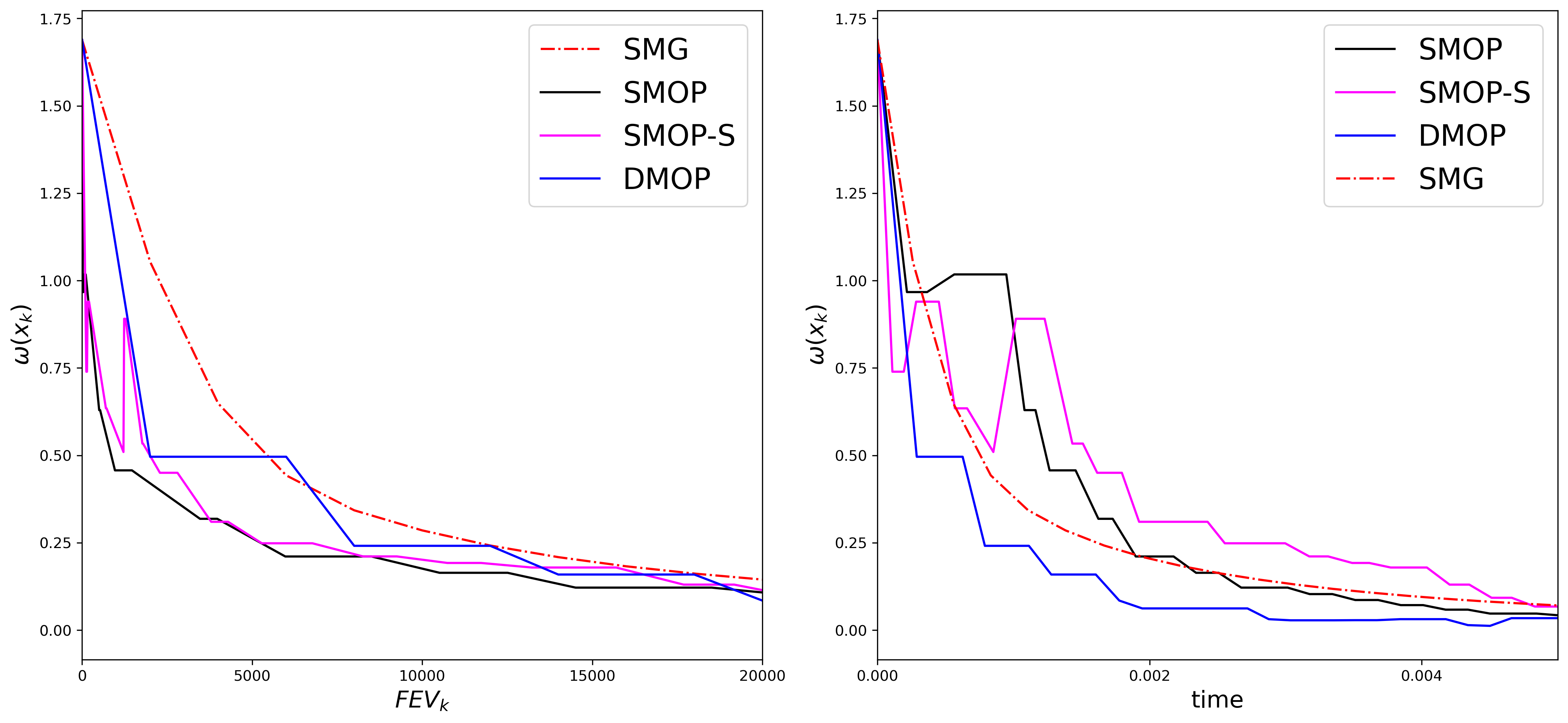} \\
\hspace{-12pt}
\includegraphics[width=0.85\textwidth,keepaspectratio]{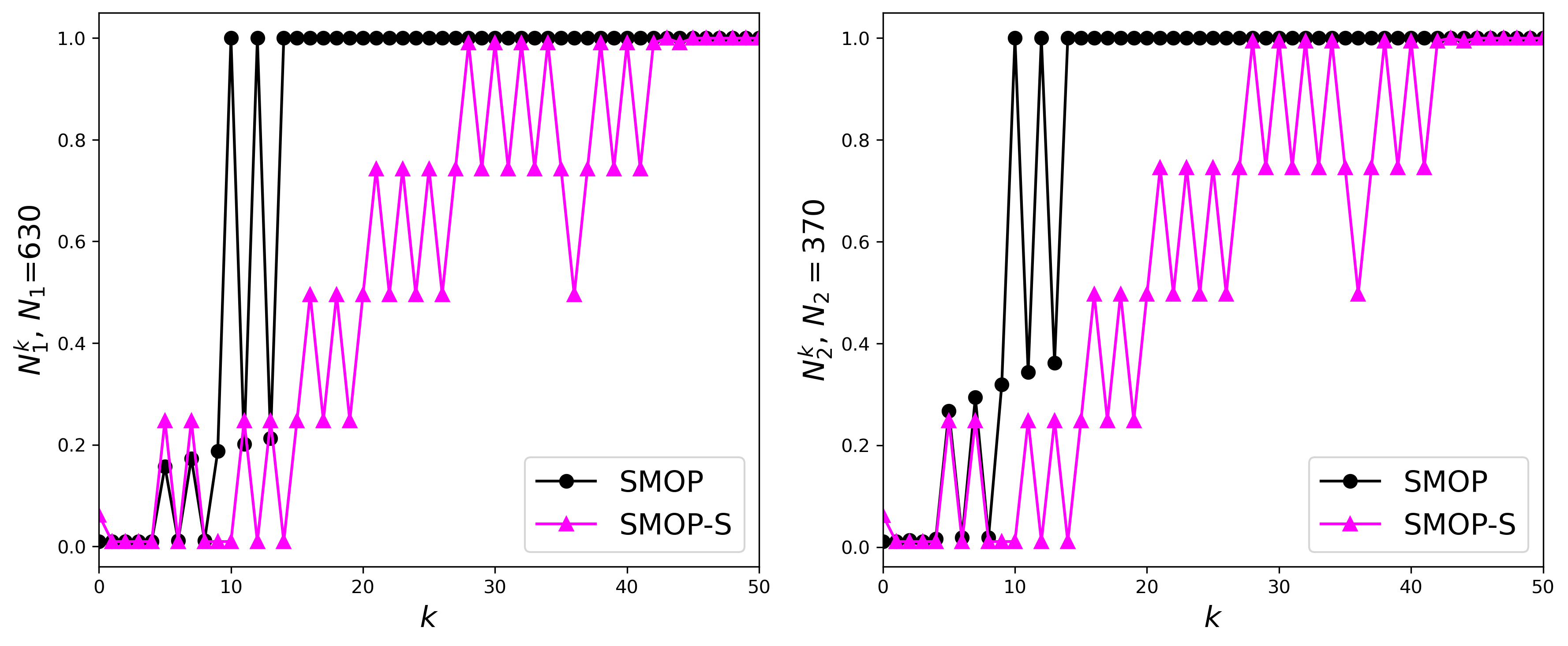} \\
\textbf{d)} \\

\caption{Performance comparison - $\omega(x_k)$ in terms of FEV/time, subsample sizes through iterations for datasets: c) svmguide, d) german.}
\label{fig1.2}
\end{figure}

\begin{figure}[htbp]
\centering

\includegraphics[width=0.82\textwidth,keepaspectratio]{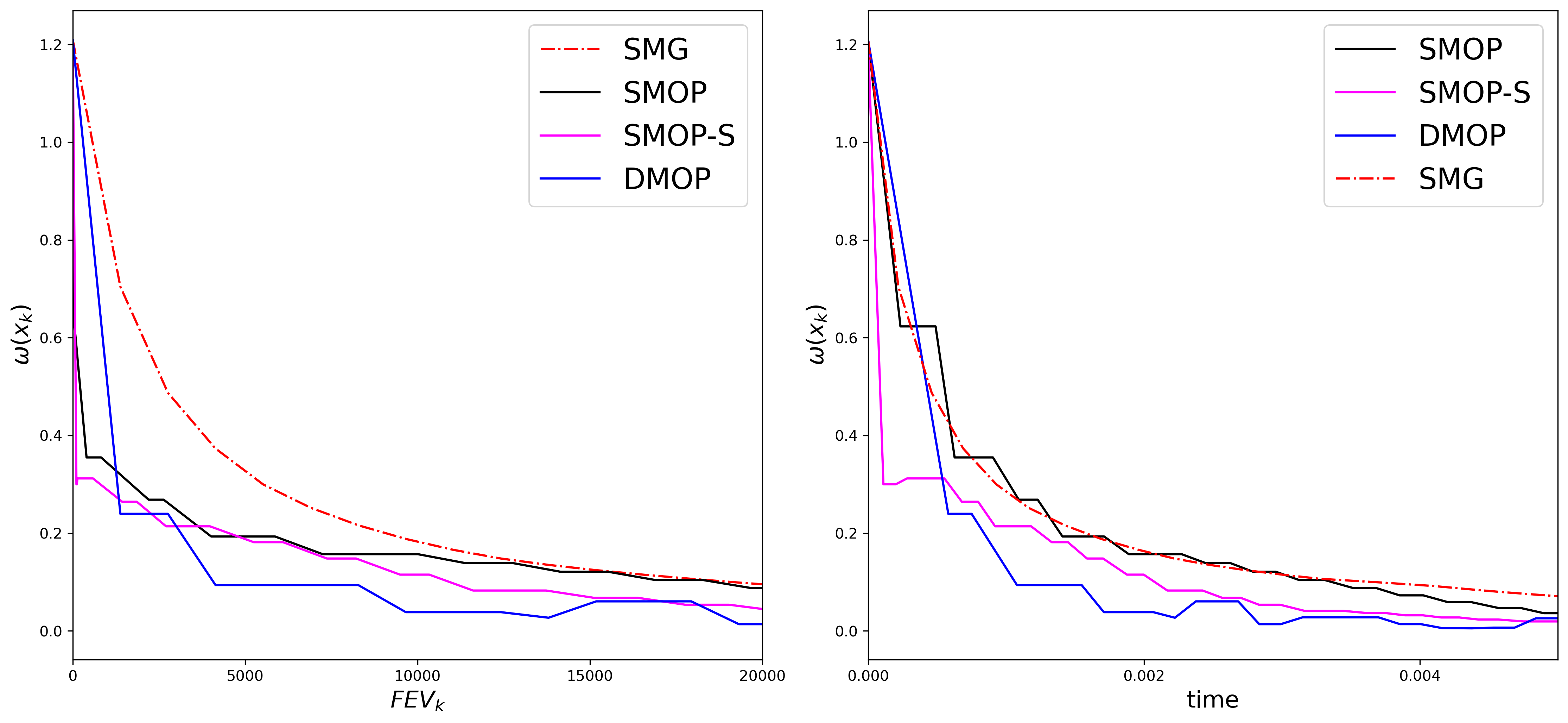} \\
\hspace{-12pt}
\includegraphics[width=0.85\textwidth,keepaspectratio]{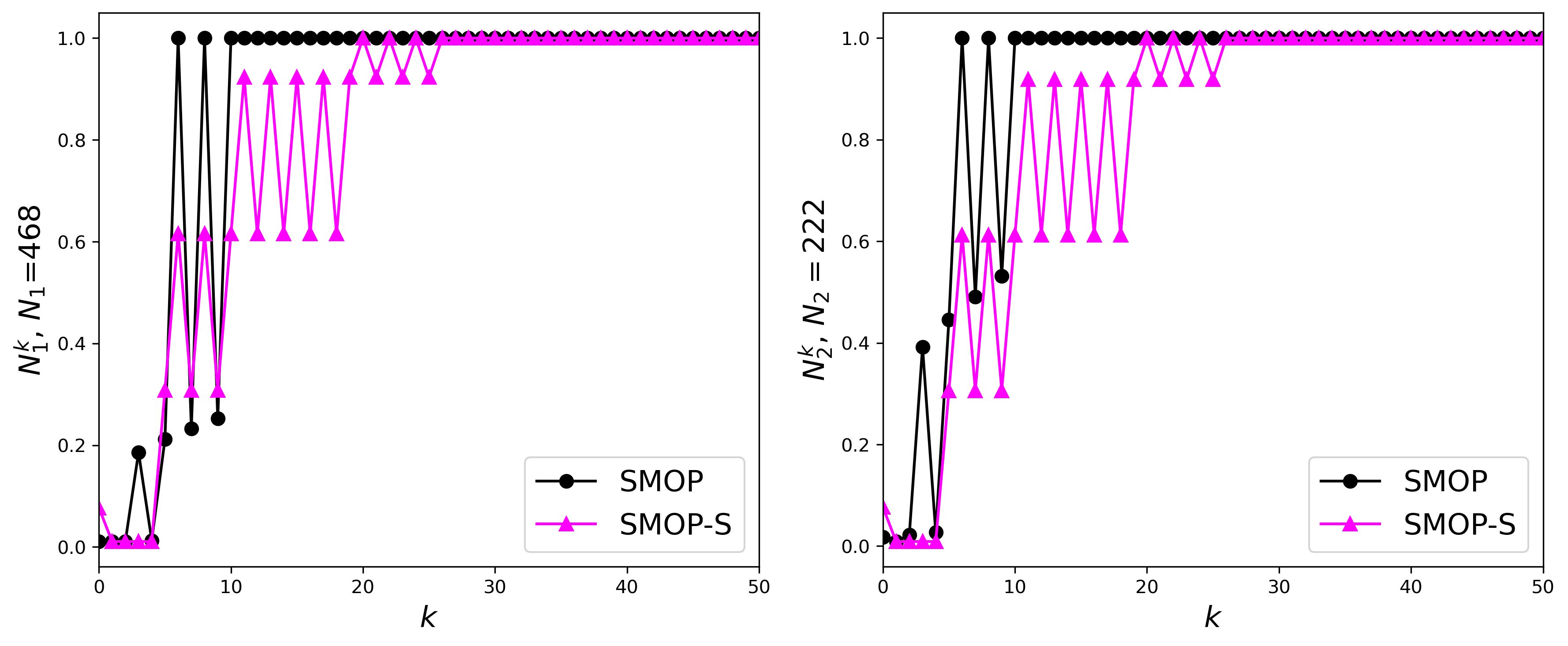} \\
\textbf{e)} \\[0.2cm]

\includegraphics[width=0.82\textwidth,keepaspectratio]{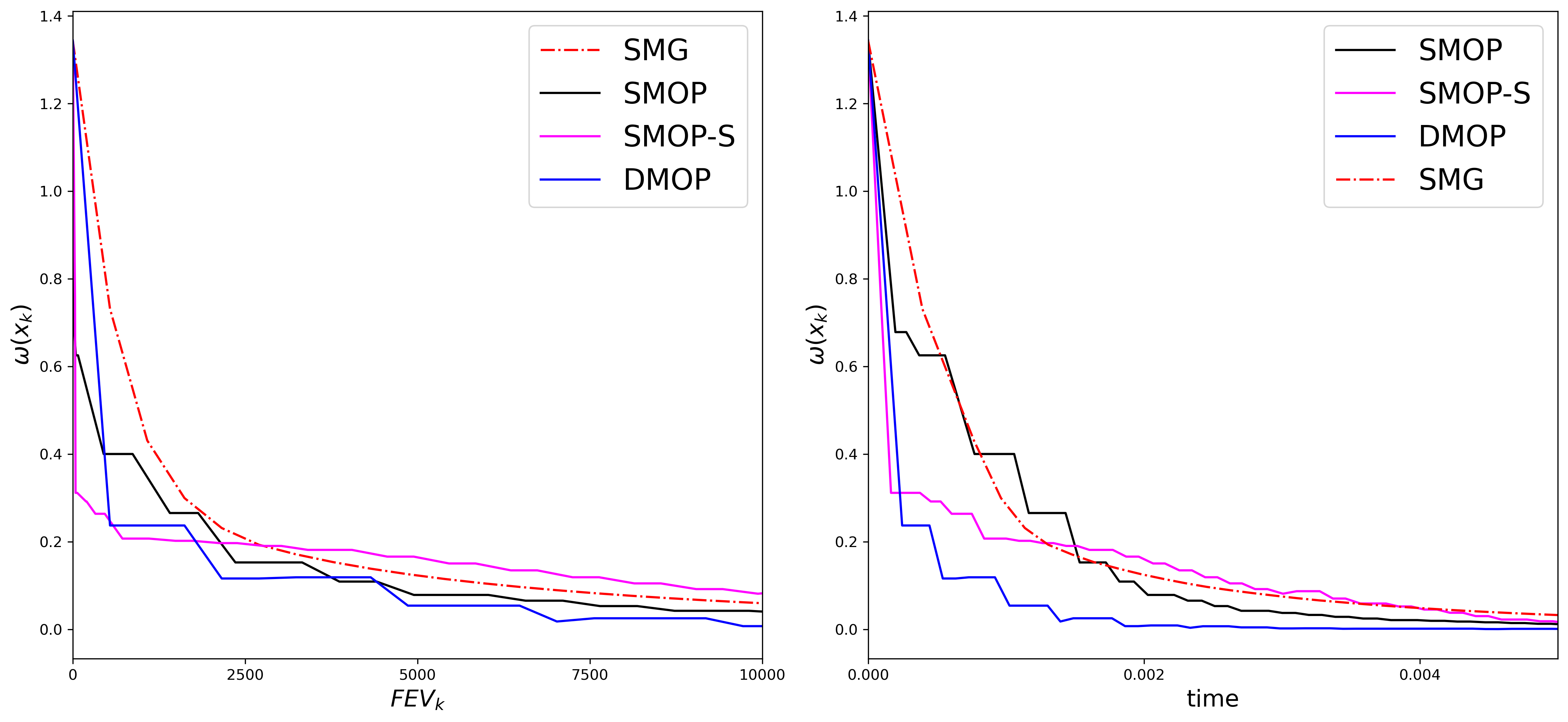} \\
\hspace{-12pt}
\includegraphics[width=0.85\textwidth,keepaspectratio]{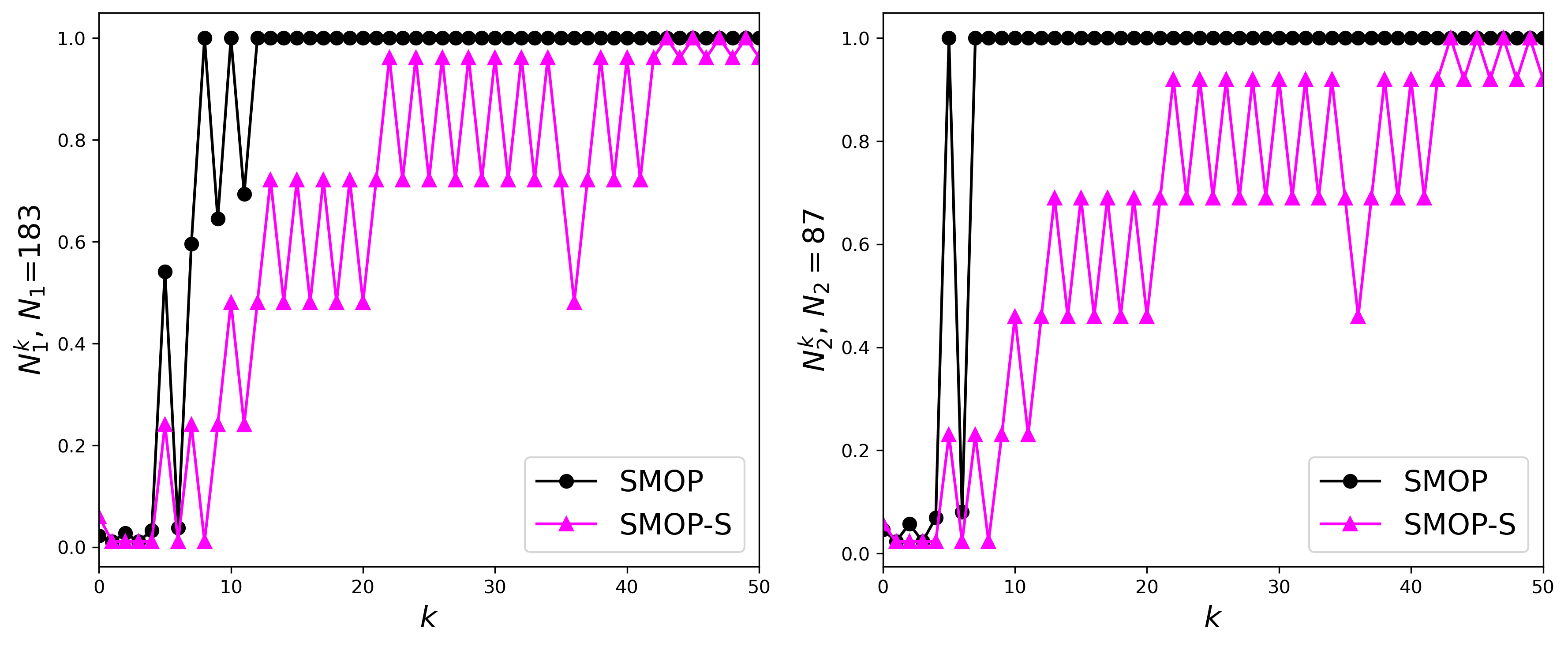} \\
\textbf{f)} \\

\caption{Performance comparison - $\omega(x_k)$ in terms of FEV/time, subsample sizes through iterations for datasets: e) australian, f) heart.}
\label{fig1.3}
\end{figure}

Further, a Pareto front finding technique from the previous section was employed, and the comparison was made between the SMOP-S and DMOP procedures. We provide a table with the results, showing Purity, $\Gamma$ and $\Delta$ spread, together with the sizes of the approximated fronts, and the time needed in seconds for finding the front. In order to show fair values, all presented results are the averages of 5 simulations, except for the covtype, which was calculated once due to high dimensions. For covtype the starting Pareto front was 3000 random points, instead of 30, and $n_r$ was changed to 100. For mnist, we do not provide the Pareto front analysis, as it was to large; however the Pareto critical points we get from running SMOP-S multiple times provide us fair models, which can be seen in Table \ref{tab1}. From Table \ref{tab2}, we can see that most of the time DMOP produces higher quality approximations of the Pareto front, having larger Purity, and smaller Spread metrics. However, SMOP-S approximated the front more efficiently in terms of time, with DMOP requiring 2, 1.1, 1.4, 1.2, and 0.9 times the execution time of SMOP-S across the respective datasets. For heart dataset, DMOP was faster, however for the other problems SMOP-S had time advantage. From tests, we have noticed that DMOP procedure makes fewer iterations which add more points per iteration.

\begin{table}[h!]
\begin{tabular}{|c|c|c|c|c|c|c|l|}
\hline
           & Algorithm                                             & Purity                                              & $\Gamma$                                              & $\Delta$                                            & $|\mathcal{L}|$                                     & time (s)                                                  & \#iter                                          \\ \hline
covtype    & \begin{tabular}[c]{@{}c@{}}SMOP-S\\ DMOP\end{tabular} & \begin{tabular}[c]{@{}c@{}}0.96\\ 0.99\end{tabular} & \begin{tabular}[c]{@{}c@{}}0.02\\ 0.001\end{tabular}  & \begin{tabular}[c]{@{}c@{}}1.88\\ 1.89\end{tabular} & \begin{tabular}[c]{@{}c@{}}2709\\ 1878\end{tabular} & \begin{tabular}[c]{@{}c@{}}31491.12\\ 62301.16\end{tabular} &                                            \begin{tabular}[c]{@{}c@{}}105\\66 \end{tabular}     \\ \hline
svmguide3  & \begin{tabular}[c]{@{}c@{}}SMOP-S\\ DMOP\end{tabular} & \begin{tabular}[c]{@{}c@{}}0.92\\ 0.98\end{tabular} & \begin{tabular}[c]{@{}c@{}}0.044\\ 0.024\end{tabular} & \begin{tabular}[c]{@{}c@{}}1.79\\ 1.78\end{tabular} & \begin{tabular}[c]{@{}c@{}}1728\\ 1878\end{tabular} & \begin{tabular}[c]{@{}c@{}}5.58\\ 6.12\end{tabular}       & \begin{tabular}[c]{@{}l@{}}26\\ 18\end{tabular} \\ \hline
german     & \begin{tabular}[c]{@{}c@{}}SMOP-S\\ DMOP\end{tabular} & \begin{tabular}[c]{@{}c@{}}0.98\\ 0.99\end{tabular} & \begin{tabular}[c]{@{}c@{}}0.006\\ 0.005\end{tabular} & \begin{tabular}[c]{@{}c@{}}1.61\\ 1.87\end{tabular} & \begin{tabular}[c]{@{}c@{}}2088\\ 1995\end{tabular} & \begin{tabular}[c]{@{}c@{}}4.68\\ 6.56\end{tabular}       & \begin{tabular}[c]{@{}l@{}}26\\ 17\end{tabular} \\ \hline
australian & \begin{tabular}[c]{@{}c@{}}SMOP-S\\ DMOP\end{tabular} & \begin{tabular}[c]{@{}c@{}}0.91\\ 0.97\end{tabular} & \begin{tabular}[c]{@{}c@{}}0.005\\ 0.004\end{tabular} & \begin{tabular}[c]{@{}c@{}}1.67\\ 1.81\end{tabular} & \begin{tabular}[c]{@{}c@{}}1831\\ 1788\end{tabular} & \begin{tabular}[c]{@{}c@{}}5.24\\ 6.15\end{tabular}       & \begin{tabular}[c]{@{}l@{}}27\\ 23\end{tabular} \\ \hline
heart      & \begin{tabular}[c]{@{}c@{}}SMOP-S\\ DMOP\end{tabular} & \begin{tabular}[c]{@{}c@{}}0.93\\ 0.92\end{tabular} & \begin{tabular}[c]{@{}c@{}}0.009\\ 0.017\end{tabular} & \begin{tabular}[c]{@{}c@{}}1.80\\ 1.87\end{tabular} & \begin{tabular}[c]{@{}c@{}}1947\\ 2145\end{tabular} & \begin{tabular}[c]{@{}c@{}}2.11\\ 1.92\end{tabular}       & \begin{tabular}[c]{@{}l@{}}35\\ 15\end{tabular} \\ \hline
\end{tabular}
\caption{Comparison metrics between resulting Pareto front approximations from SMOP and DMOP on different datasets}
\label{tab2}
\end{table}
Additionally, we show Pareto front approximations for both SMOP-S and DMOP in Figure \ref{fig2.1}. The tests showed the SMOP-S iterations are faster in the beginning, and  slow down as the points go closer to the Pareto front, since the trust region radius approaches 0 in later iterations. For homogeneous problems, these fast iterations can be explained as generating points closer to the front in early iterations, whereas for heterogeneous problems, these fast approximations act as an additional randomization
factor which perturbs the points, and not necessarily improving the front.
\begin{figure}[htbp]
\centering
\begin{tabular}{cc}
\includegraphics[width=0.39\textwidth,keepaspectratio]{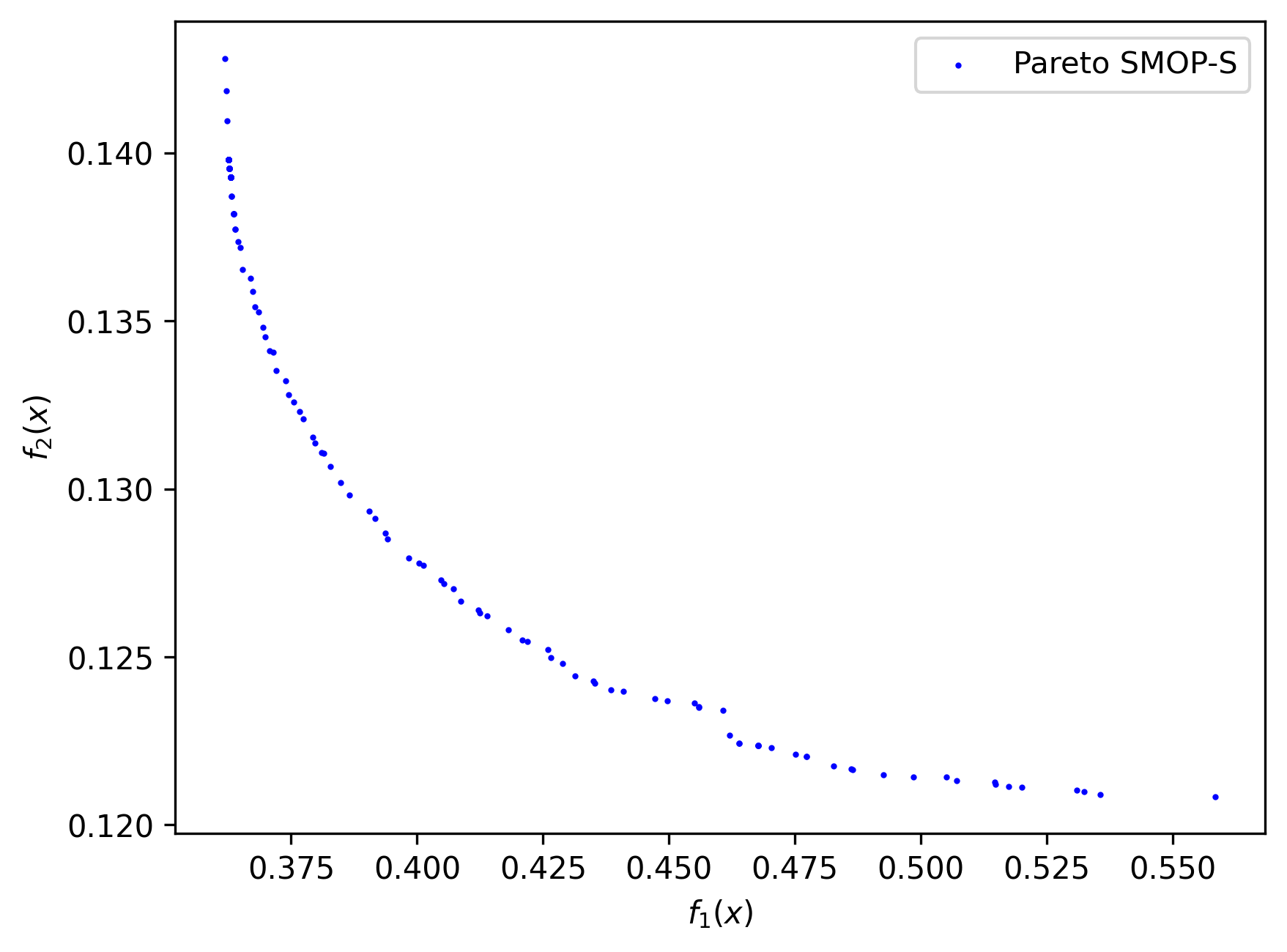} &
\includegraphics[width=0.39\textwidth,keepaspectratio]{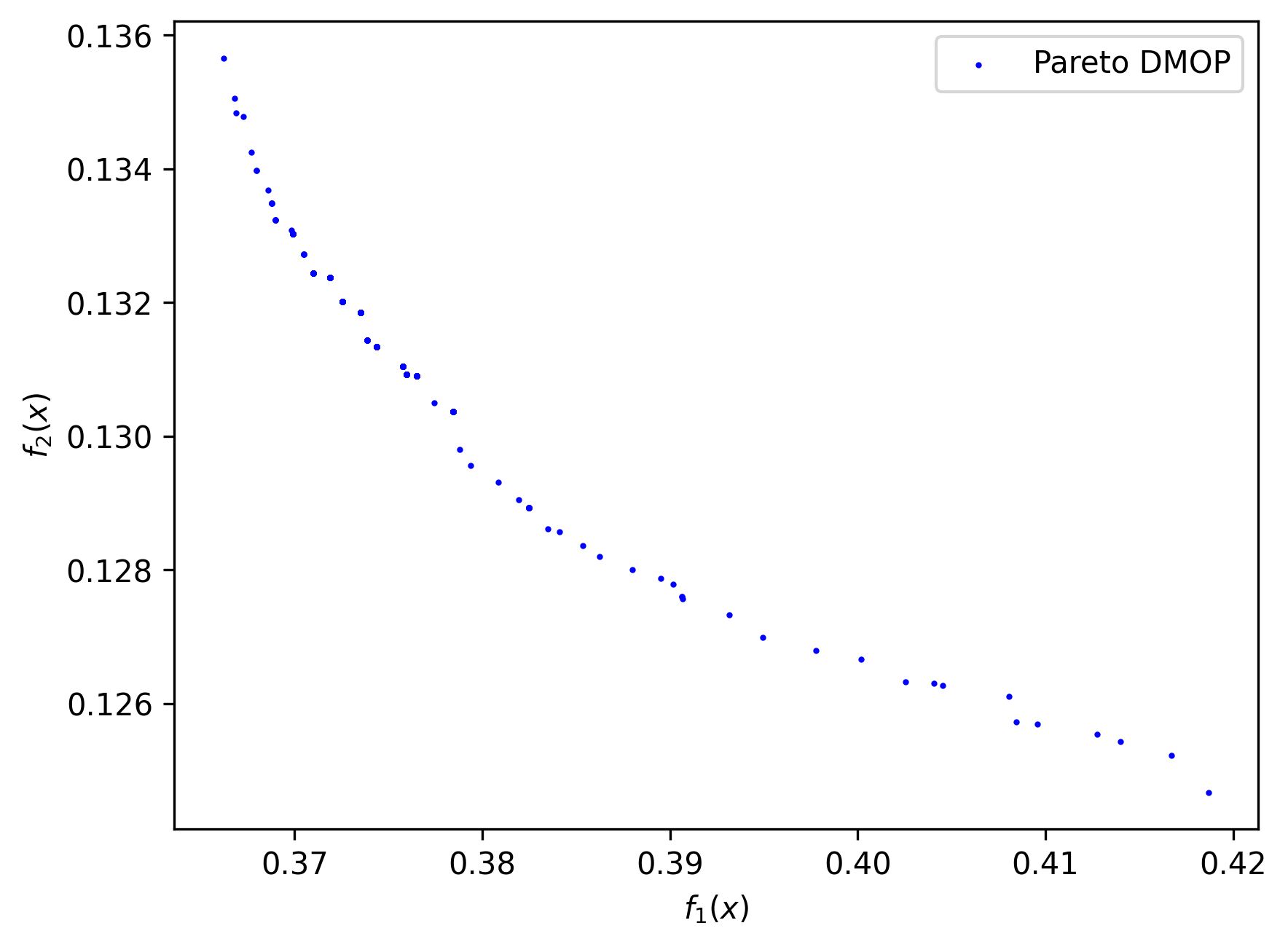} \\
\multicolumn{2}{c}{\textbf{a)}} \\
\includegraphics[width=0.39\textwidth,keepaspectratio]{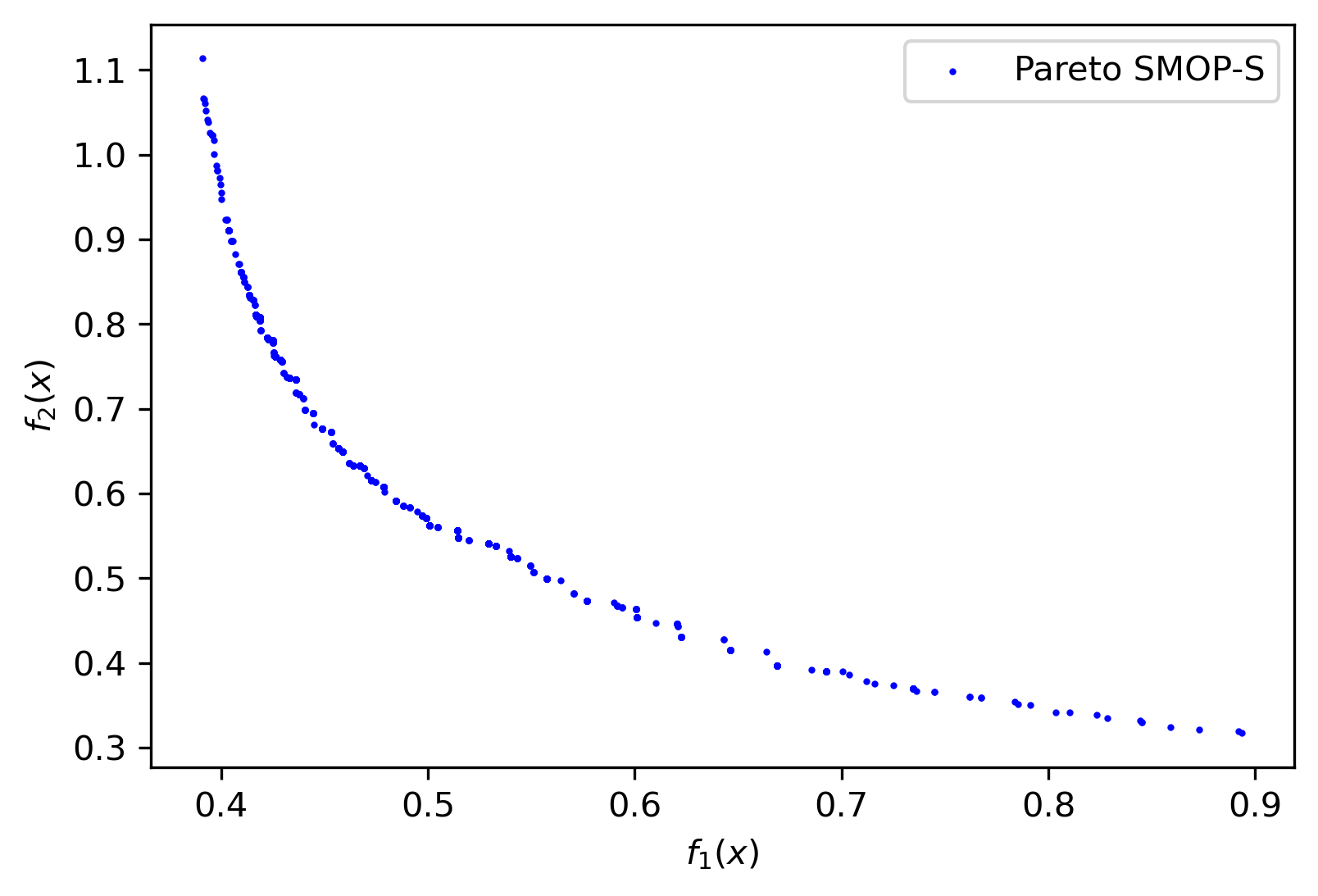} &
\includegraphics[width=0.39\textwidth,keepaspectratio]{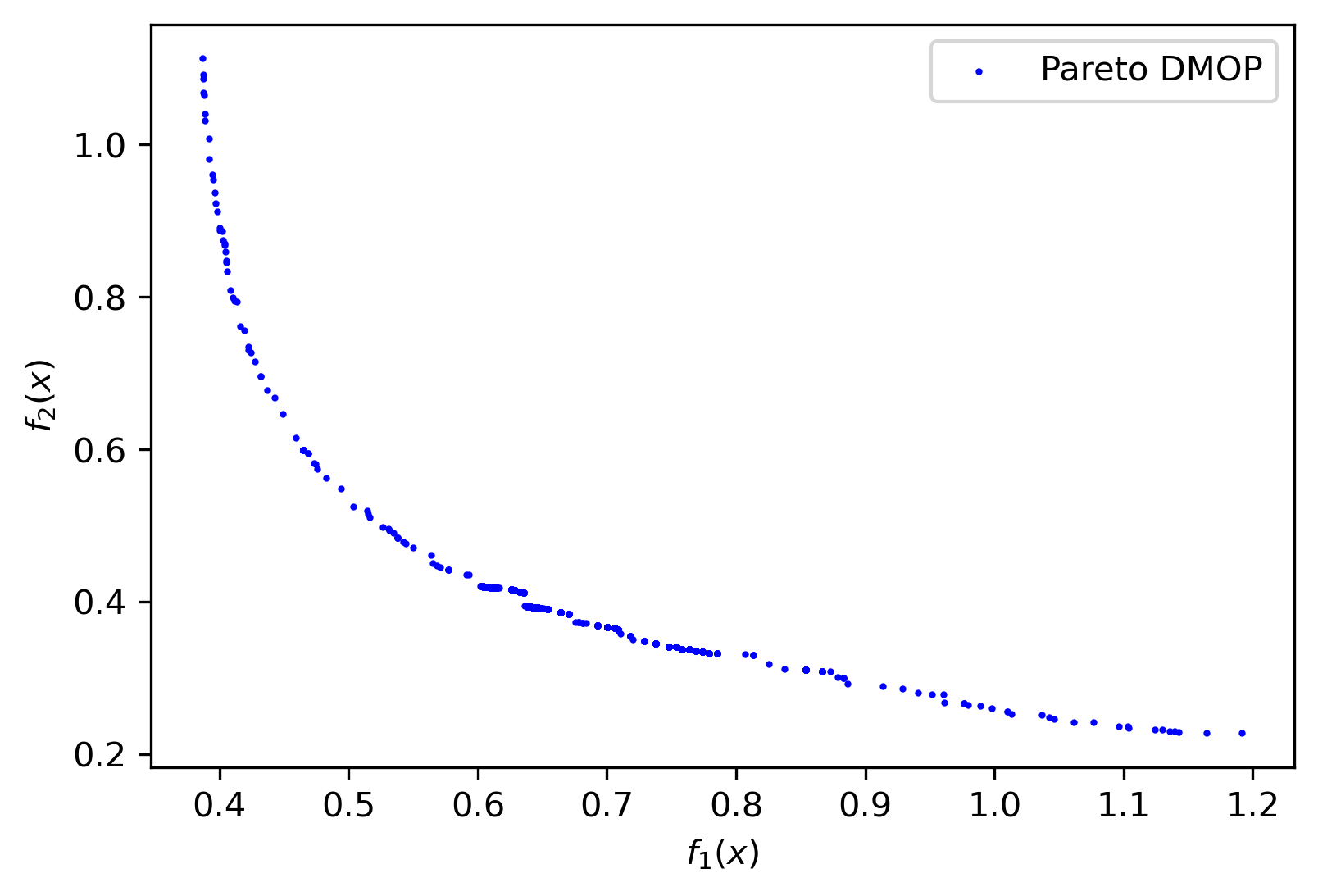} \\
\multicolumn{2}{c}{\textbf{b)}} \\[0.1cm]
\includegraphics[width=0.39\textwidth,keepaspectratio]{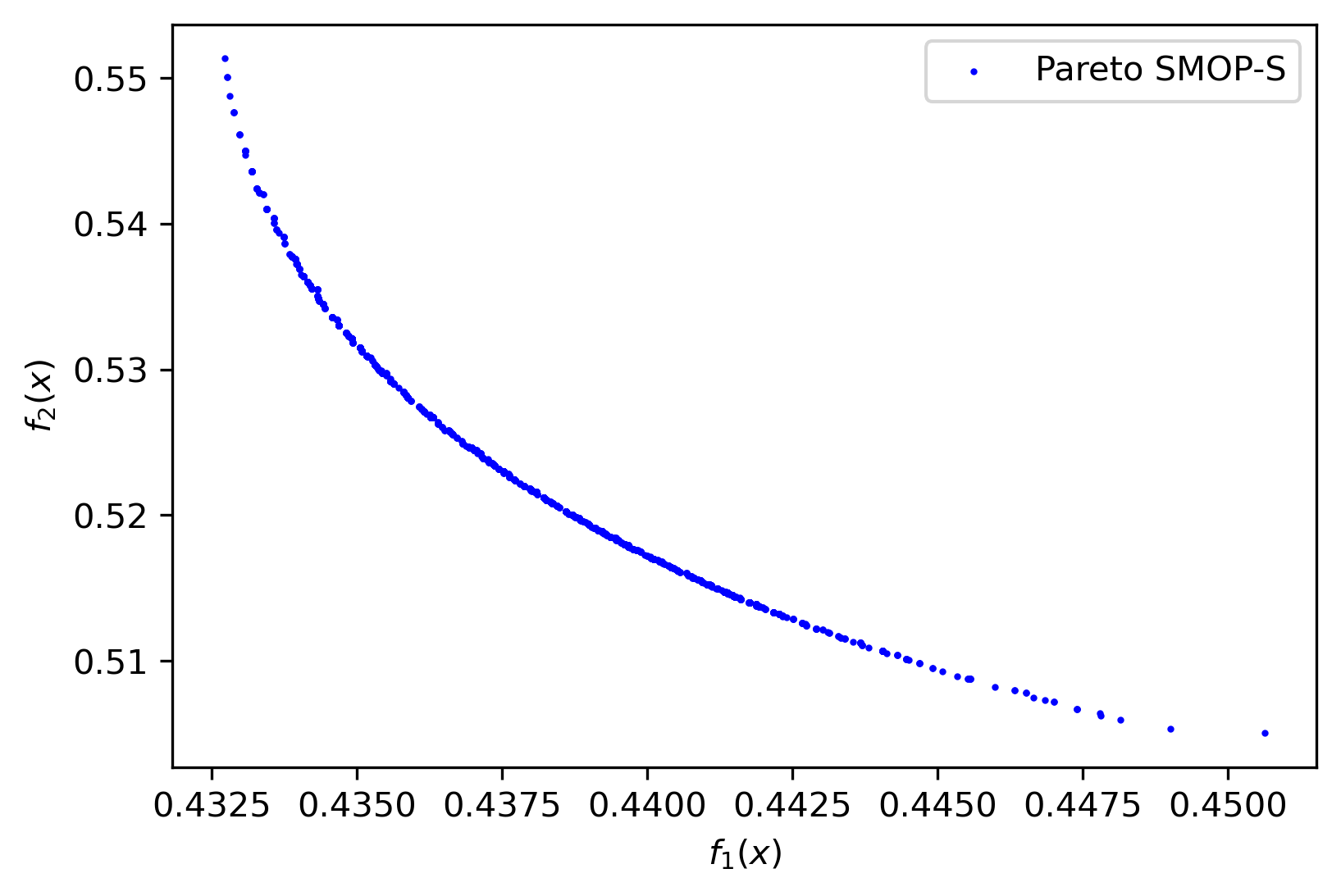} &
\includegraphics[width=0.39\textwidth,keepaspectratio]{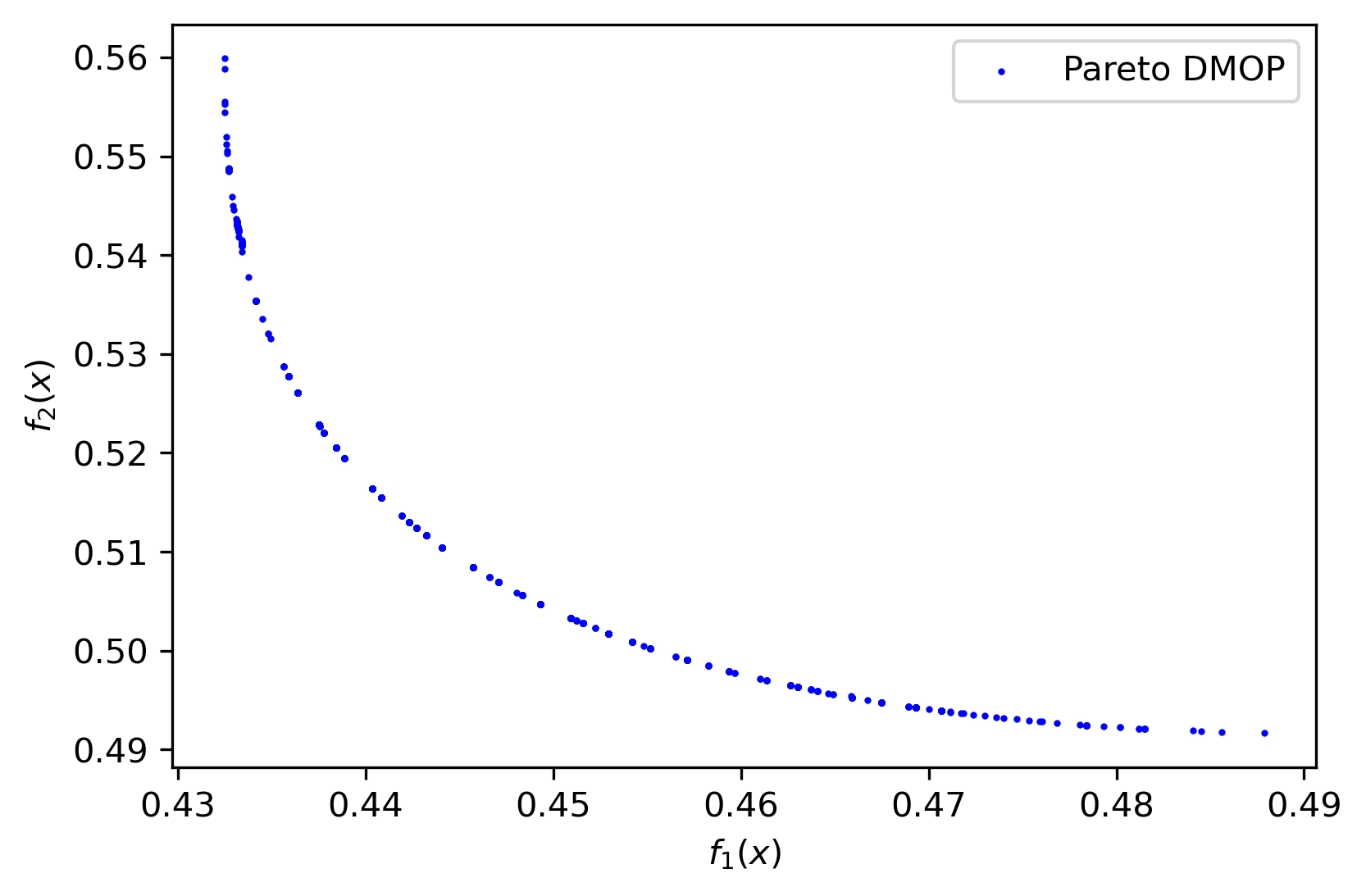} \\
\multicolumn{2}{c}{\textbf{c)}} \\[0.1cm]
\includegraphics[width=0.39\textwidth,keepaspectratio]{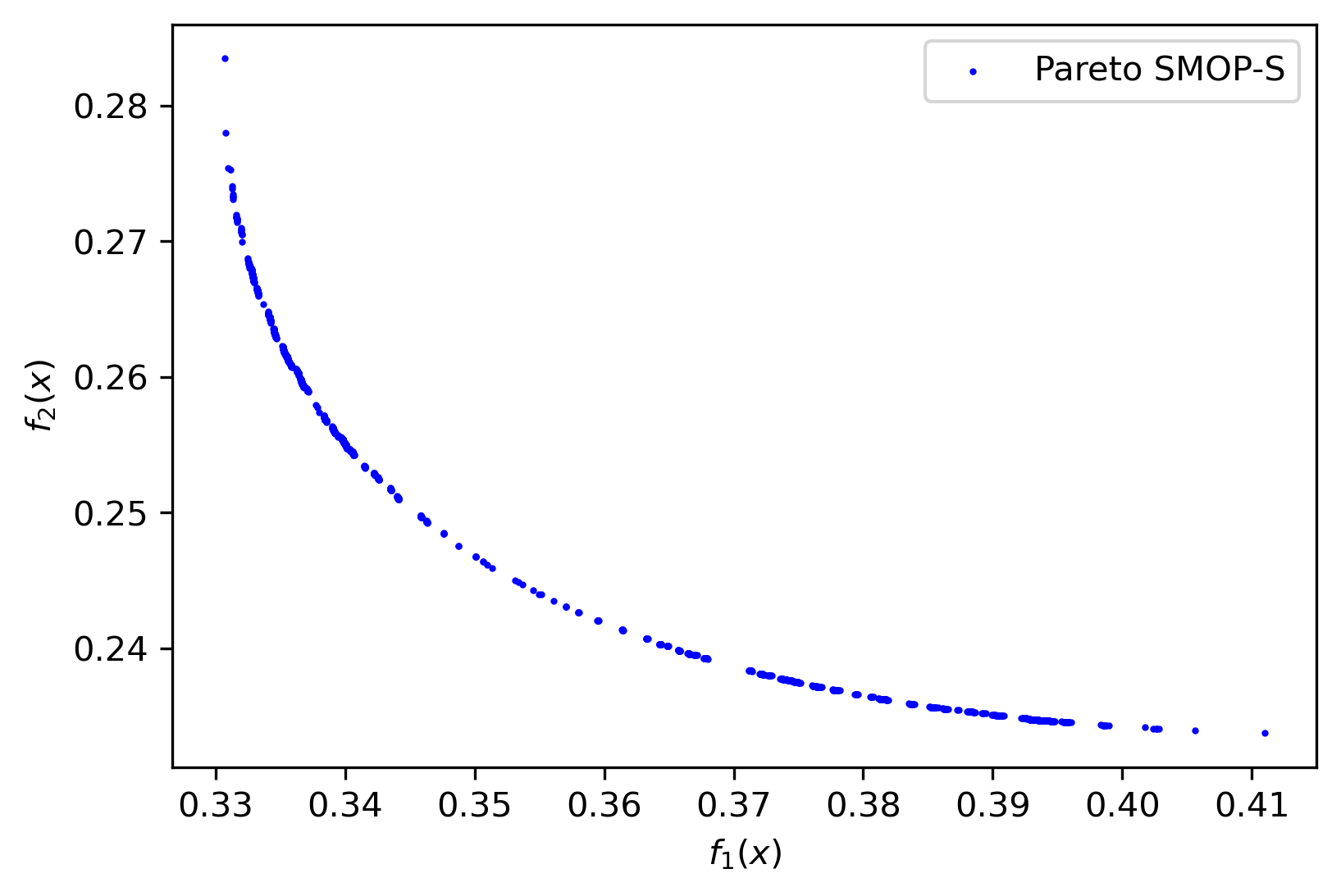} &
\includegraphics[width=0.39\textwidth,keepaspectratio]{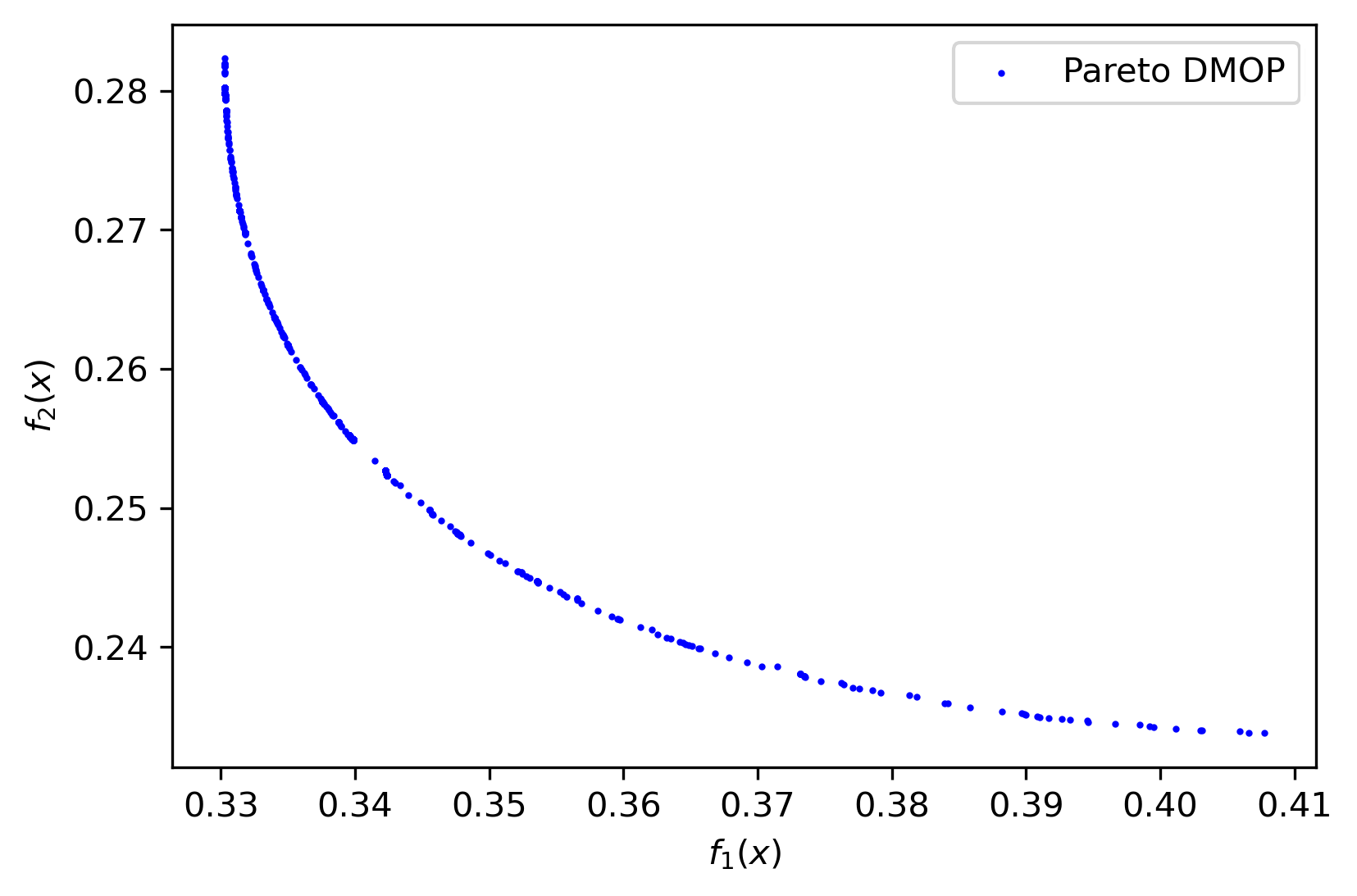} \\
\multicolumn{2}{c}{\textbf{d)}} \\[0.1cm]
\includegraphics[width=0.39\textwidth,keepaspectratio]{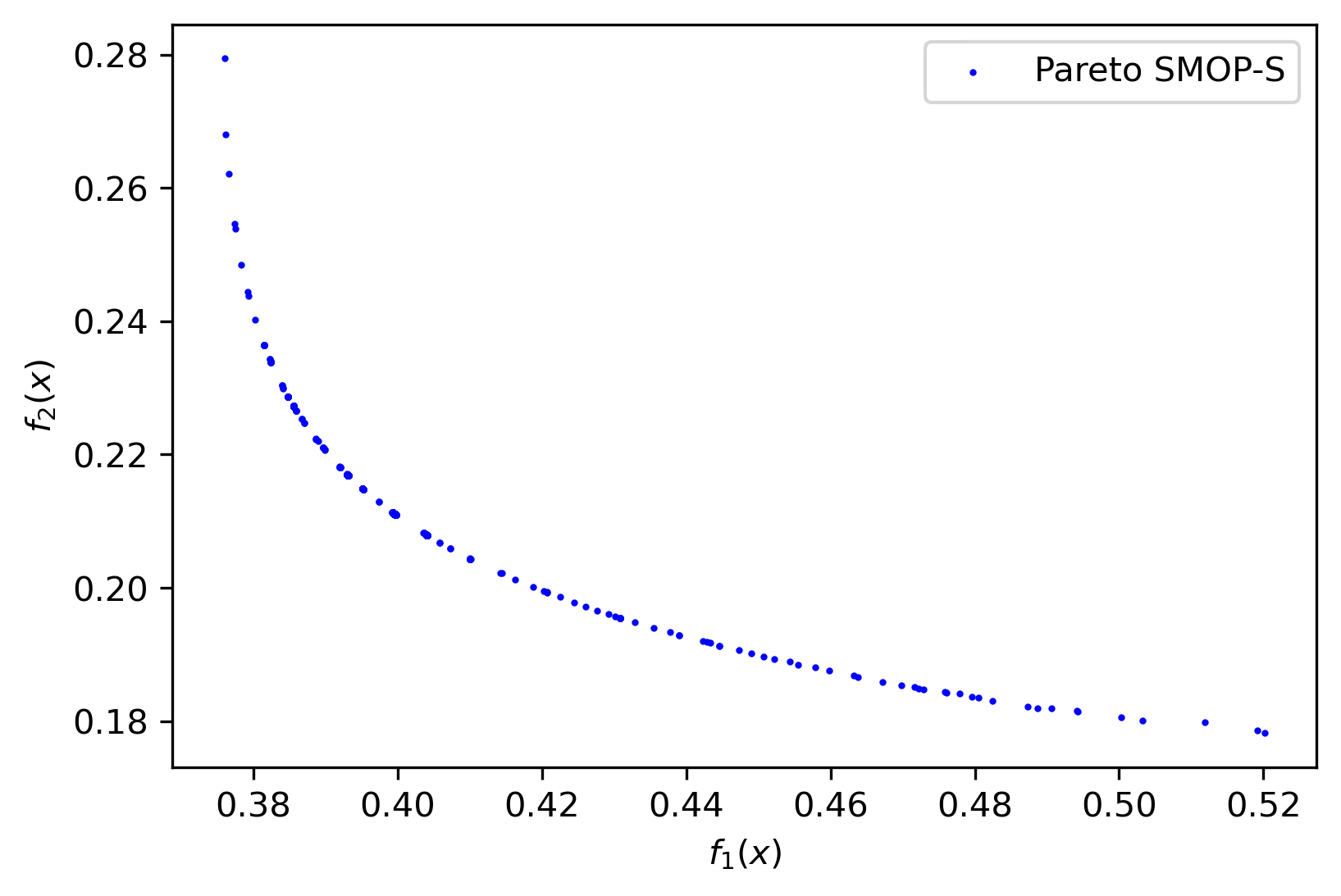} &
\includegraphics[width=0.39\textwidth,keepaspectratio]{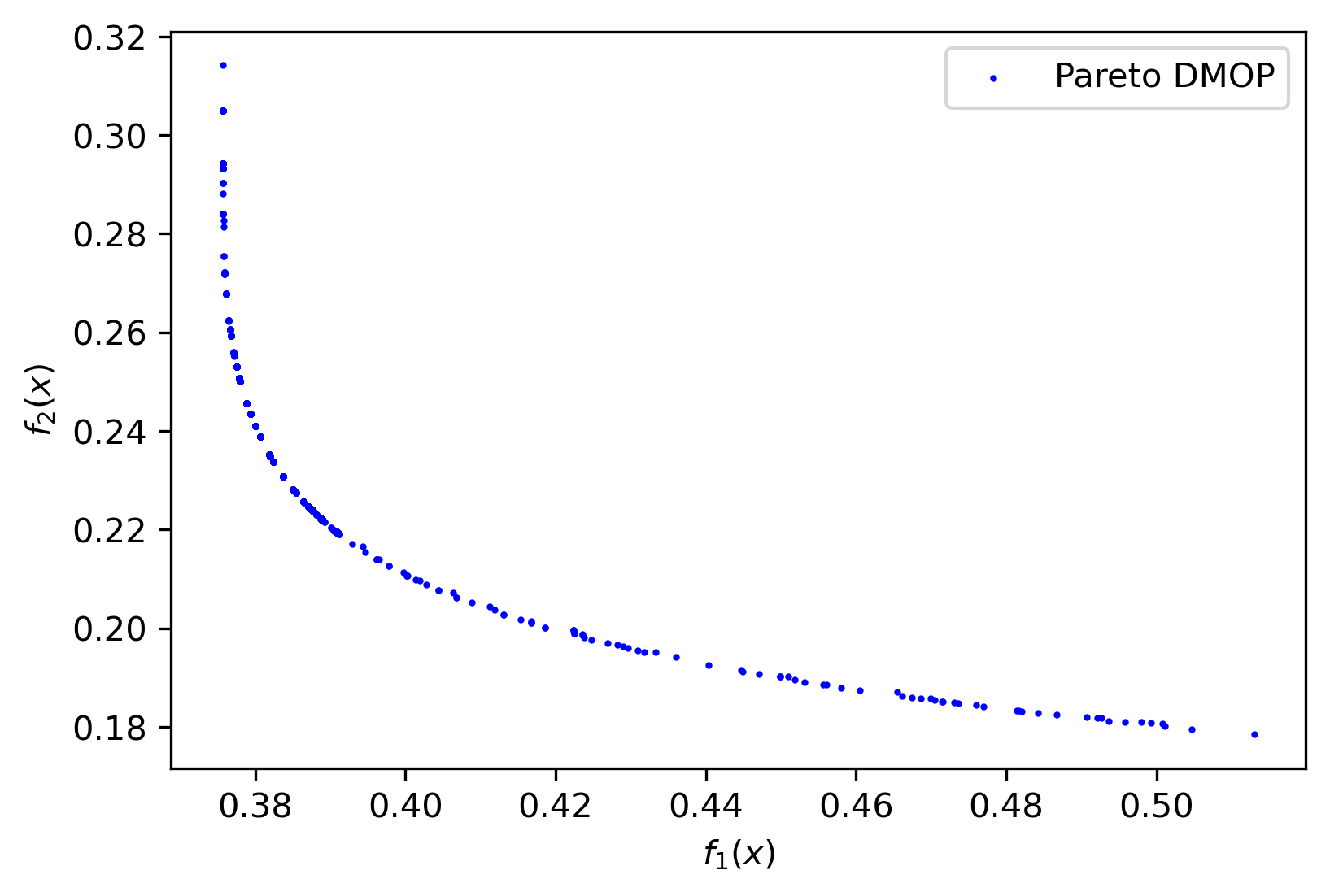} \\
\multicolumn{2}{c}{\textbf{e)}} \\

\end{tabular}
\caption{Pareto front approximations using SMOP (left) and DMOP (right): a) covtype b) svmguide c) german d) australian, e) heart, }
\label{fig2.1}
\end{figure}

\subsection{Test problems with added noise}
The second set of experiments consists of unconstrained multi objective problems gathered from \cite{CMV} and \cite{NUM}. The presented problems are low dimension and they differ in terms of the shape of the Pareto front. Namely, four shapes are presented: convex, concave, mixed (neither convex nor concave) and disconnected. The problem we are solving is the following:

\begin{equation}
    \min_{x}f(x)=\min_{x}(f_1(x),f_2(x))^T
    \label{tp1}
\end{equation}
In order to simulate stochasticity we choose to perturb points at each iteration, as done in \cite{LV},\cite{TEST}, which turns the problem into
\begin{equation}
    \min_{x}F(x)=\min_x(E[f_1(x+\omega)],E[ f_2(x+\omega)])^T
    \label{testprob}
\end{equation}
where $\omega$ is a random uniform vector with mean zero. Using Stochastic Average Approximation, \eqref{testprob} is transformed into the finite sum problem.

\begin{equation}
     \min_{x}F(x)\approx\min_x(\frac{1}{N}\sum_{i=1}^{N}f_1(x+\omega_i),\frac{1}{N}\sum_{i=1}^{N}f_2(x+\omega_i))^T
    \label{saa}
\end{equation}
where $\omega_i$ are i.i.d. samples of the random variable $\omega$. Similarly as in the previous experiment, we test SMOP-S against DMOP Pareto front finding procedure on the problem \eqref{saa}. The SMOP-S configuration follows the one explained in previous sections. For each problem, we generate $N=500$ i.i.d. uniformly sampled vectors with mean zero and length of the interval equal to $0.1$.  The following Table \ref{tab3} showcases the basic information for each considered problem. Exponential functions and its derivatives in FF1 and T2 are considered harder to calculate \cite{NUM}, hence the evaluation of these functions was a time consuming factor, similarly as in the logistic regression.
\begin{table}[h!]
\centering
\begin{tabular}{|l|l|l|l|}
\hline
    & n & Functions       & Shape        \\ \hline
SP1 & 2 &  \begin{tabular}[l]{@{}l@{}}{\small$f_1(x)=(x_1-1)^2+(x_1-x_2)^2$}\\{\small$f_2(x)=(x_2-3)^2+(x_1-x_2)^2$}\end{tabular}     & convex       \\ \hline
SK1 & 1 &  \begin{tabular}[l]{@{}l@{}}{\small$f_1(x)=x^4+3x^3-10x^2-10x-10$}\\{\small$f_2(x)=0.5x^4-2x^3-10x^2+10x-5$}\end{tabular}         & disconnected \\ \hline
FF1 & 2 &  \begin{tabular}[l]{@{}l@{}}{\small$f_1(x)=1-e^{-(x_1-1)^2-(x_2+1)^2}$}\\ {\small$f_2(x)=1-e^{-(x_1+1)^2-(x_2-1)^2}$}\end{tabular}                                     & concave      \\ \hline
T2  & 2 &  \begin{tabular}[l]{@{}l@{}}{\small$f_1(x)=\sin(x_2)$}\\{\small$f_2(x)=1-e^{-(x_1-\frac{1}{\sqrt{2}})^2-(x_2-\frac{1}{\sqrt{2}})^2}$} \end{tabular}                   & mixed        \\ \hline
\end{tabular}
\caption{Problem description for \eqref{saa}}
\label{tab3}
\end{table}

As in the previous experiments, the results show that the DMOP generates  fronts with better purity. Nonetheless, SMOP-S demonstrated the ability to find the front more quickly with high enough quality. The average metrics of 5 simulations can be seen in Table \eqref{tab4}.
\begin{table}[h!]
\begin{tabular}{|c|c|c|c|c|c|c|c|}
\hline
    & Algorithm                                             & Purity                                              & $\Gamma$                                              & $\Delta$                                            & $|\mathcal{L}_k|$                                   & time (s)                                            & \#iter                                        \\ \hline
SP1 & \begin{tabular}[c]{@{}c@{}}SMOP-S\\ DMOP\end{tabular} & \begin{tabular}[c]{@{}c@{}}0.95\\ 0.96\end{tabular} & \begin{tabular}[c]{@{}c@{}}0.17\\ 0.16\end{tabular}   & \begin{tabular}[c]{@{}c@{}}1.86\\ 1.84\end{tabular} & \begin{tabular}[c]{@{}c@{}}2475\\ 2377\end{tabular} & \begin{tabular}[c]{@{}c@{}}38.4\\ 81.1\end{tabular} & \begin{tabular}[c]{@{}c@{}}6\\ 5\end{tabular} \\ \hline
SK1 & \begin{tabular}[c]{@{}c@{}}SMOP-S\\ DMOP\end{tabular} & \begin{tabular}[c]{@{}c@{}}0.99\\ 1.00\end{tabular} & \begin{tabular}[c]{@{}c@{}}24.68\\ 29.25\end{tabular} & \begin{tabular}[c]{@{}c@{}}1.86\\ 1.76\end{tabular} & \begin{tabular}[c]{@{}c@{}}2434\\ 1682\end{tabular} & \begin{tabular}[c]{@{}c@{}}184\\ 253\end{tabular}   & \begin{tabular}[c]{@{}c@{}}4\\ 4\end{tabular} \\ \hline
FF1 & \begin{tabular}[c]{@{}c@{}}SMOP-S\\ DMOP\end{tabular} & \begin{tabular}[c]{@{}c@{}}0.94\\ 0.95\end{tabular} & \begin{tabular}[c]{@{}c@{}}0.9\\ 0.07\end{tabular}   & \begin{tabular}[c]{@{}c@{}}1.82\\ 1.80\end{tabular} & \begin{tabular}[c]{@{}c@{}}2208\\ 1958\end{tabular} & \begin{tabular}[c]{@{}c@{}}42.4\\ 97.0\end{tabular} & \begin{tabular}[c]{@{}c@{}}6\\ 5\end{tabular} \\ \hline
T2  & \begin{tabular}[c]{@{}c@{}}SMOP-S\\ DMOP\end{tabular} & \begin{tabular}[c]{@{}c@{}}0.91\\ 0.94\end{tabular} & \begin{tabular}[c]{@{}c@{}}0.05\\ 0.05\end{tabular}   & \begin{tabular}[c]{@{}c@{}}1.84\\ 1.86\end{tabular} & \begin{tabular}[c]{@{}c@{}}2953\\ 3303\end{tabular} & \begin{tabular}[c]{@{}c@{}}84.1\\ 208.1\end{tabular}    & \begin{tabular}[c]{@{}c@{}}5\\ 6\end{tabular} \\ \hline
\end{tabular}

\caption{Performance profile for problems from Table \ref{tab3}}
\label{tab4}
\end{table}

The Purity of the fronts generated by SMOP-S is lower than DMOP, which implicates that SMOP-S generated some points which were dominated by the those generated by DMOP. The Spread confirms that the gaps in SMOP-S fronts are larger in most problems. For problem SK1 the $\Gamma$ spread is large since the front is disconnected, and it takes the disconnected part into the calculation. The average time needed to calculate the front showed that SMOP-S procedure was 2.1, 1.3, 2.2, and 2.5 times faster than DMOP respectively. The following Figure \ref{fig4} illustrates the approximate Pareto front for both procedures. 
\begin{figure}[]
\centering
\begin{tabular}{cc}

\includegraphics[width=0.45\textwidth,keepaspectratio]{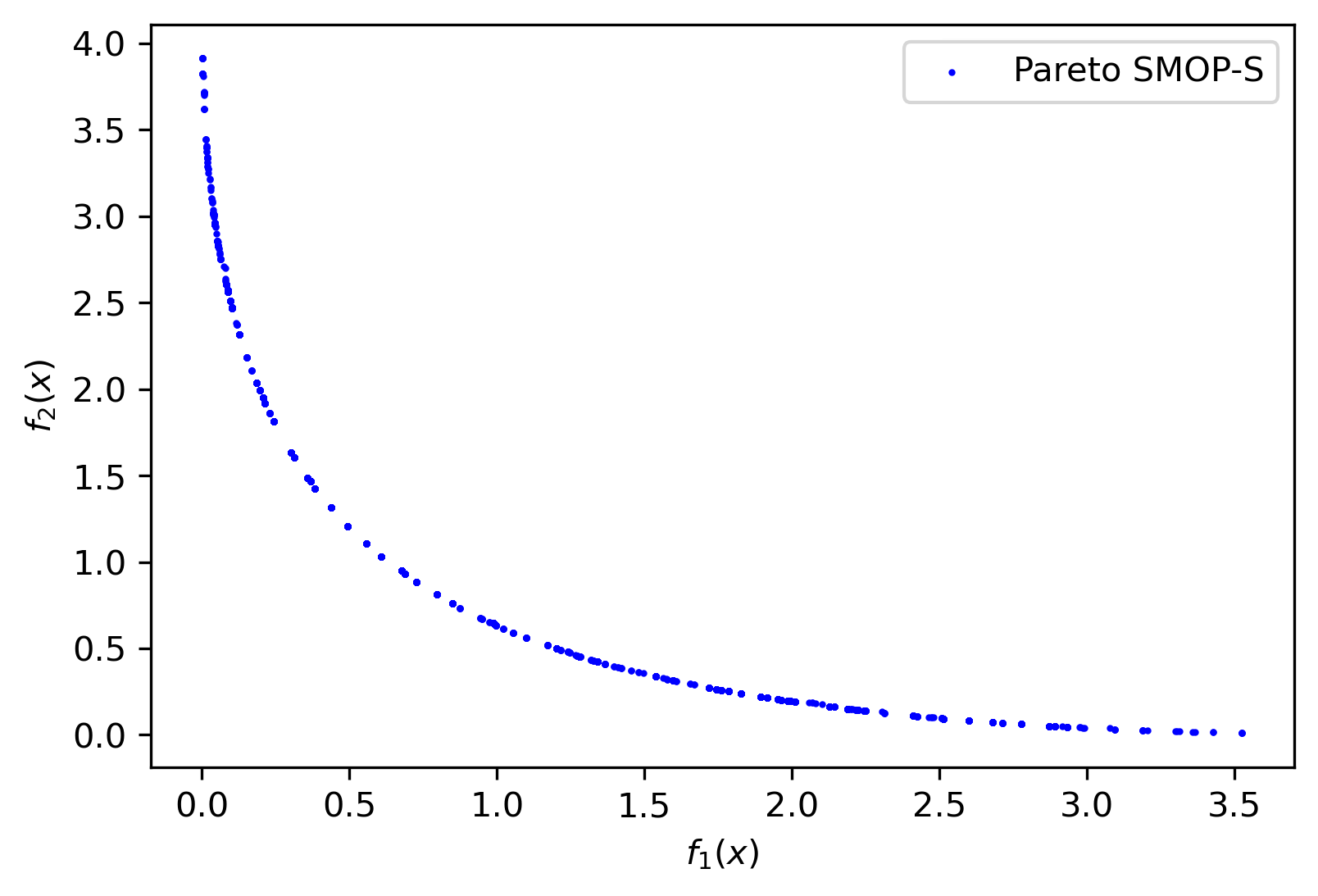} &
\includegraphics[width=0.45\textwidth,keepaspectratio]{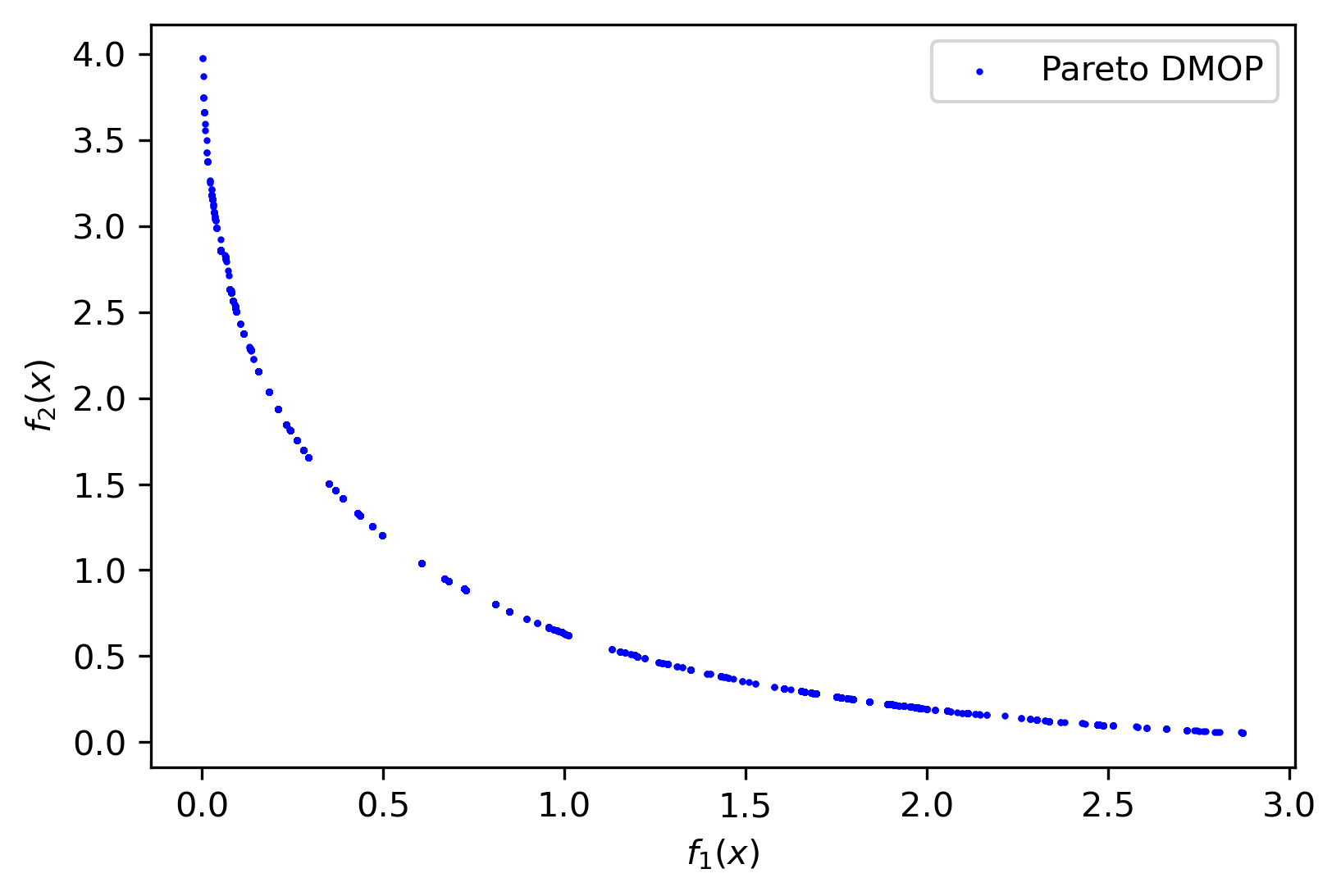} \\
\multicolumn{2}{c}{a)} \\[0.4cm]

\includegraphics[width=0.45\textwidth,keepaspectratio]{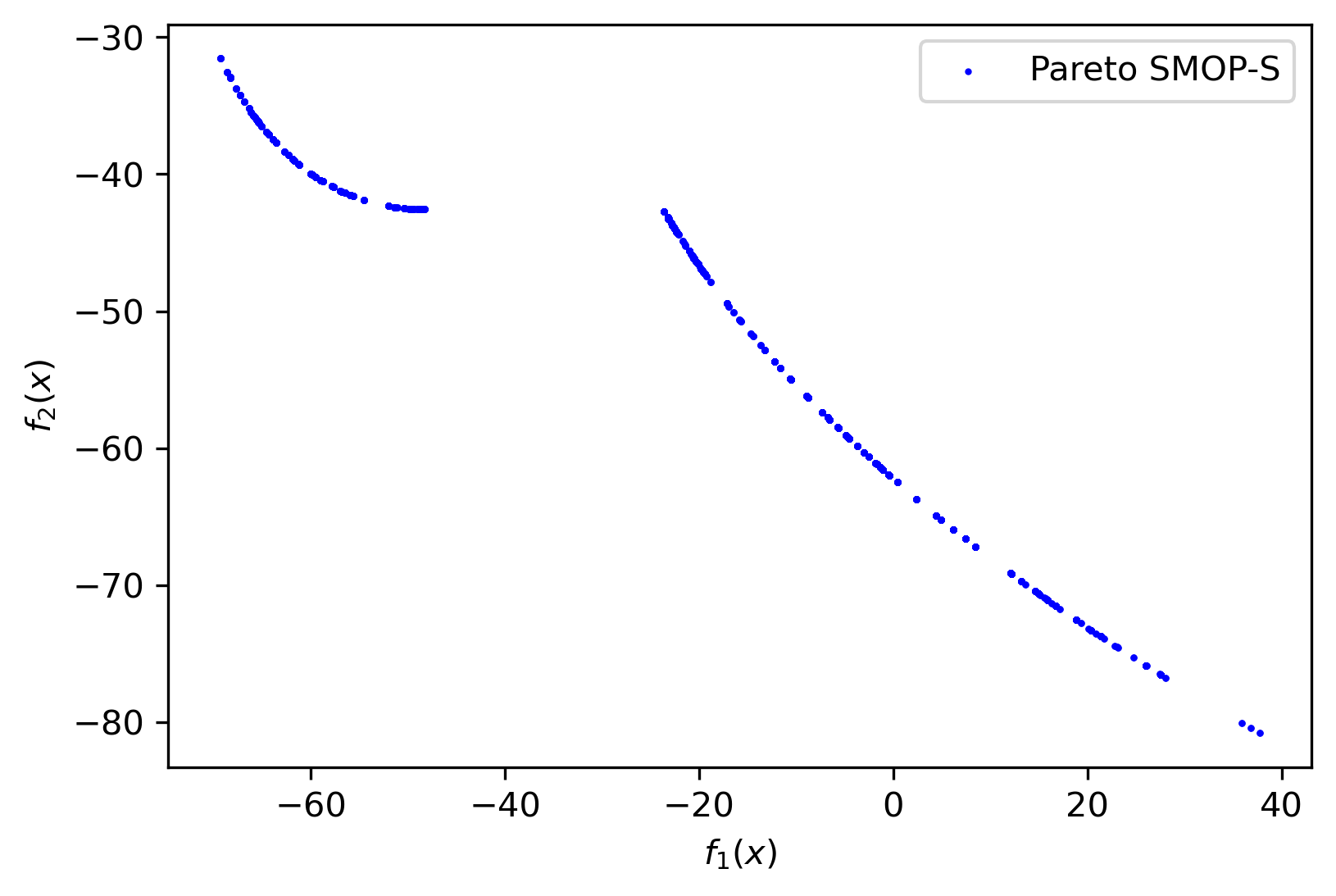} &
\includegraphics[width=0.45\textwidth,keepaspectratio]{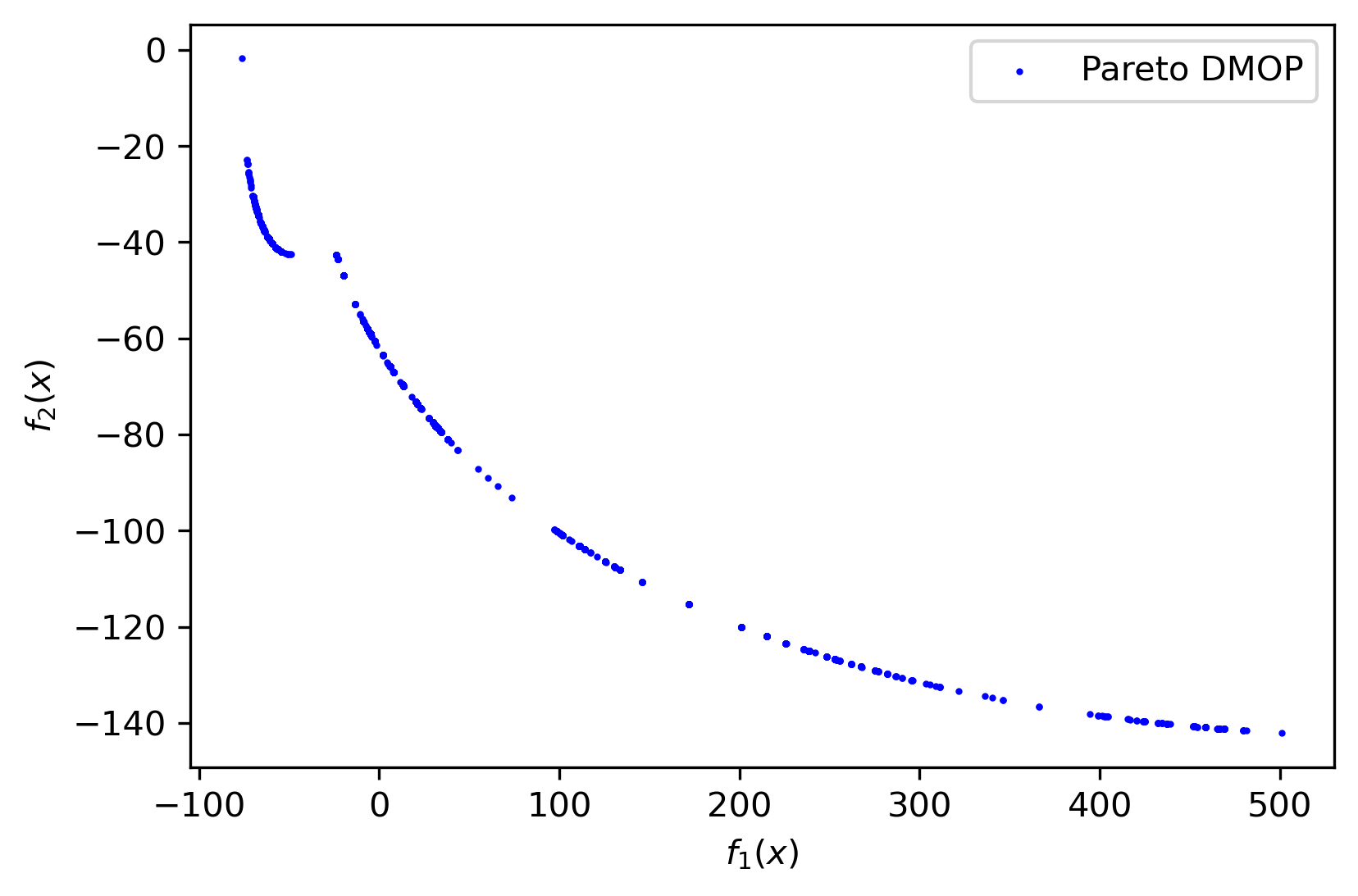} \\
\multicolumn{2}{c}{b)} \\[0.4cm]

\includegraphics[width=0.45\textwidth,keepaspectratio]{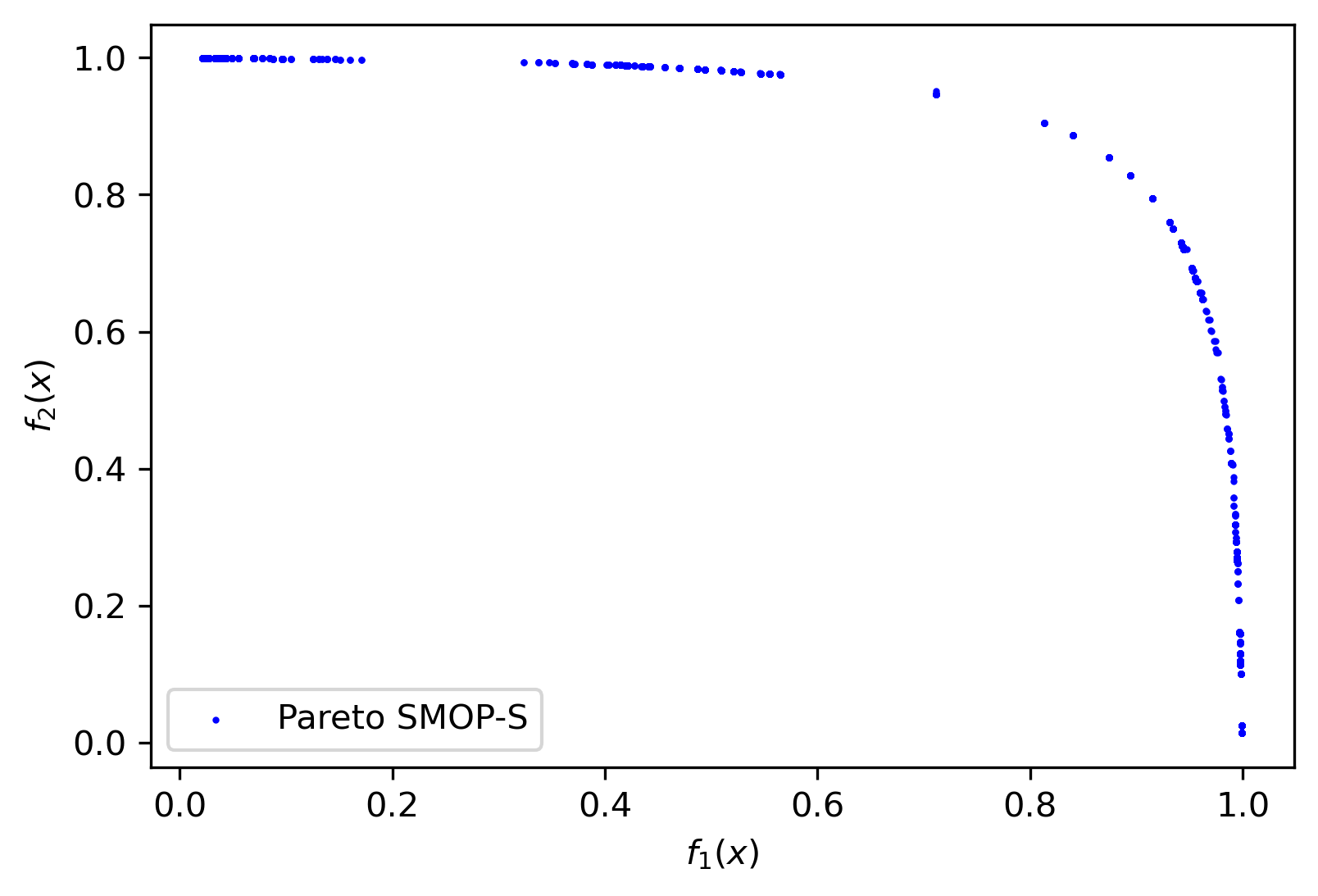} &
\includegraphics[width=0.45\textwidth,keepaspectratio]{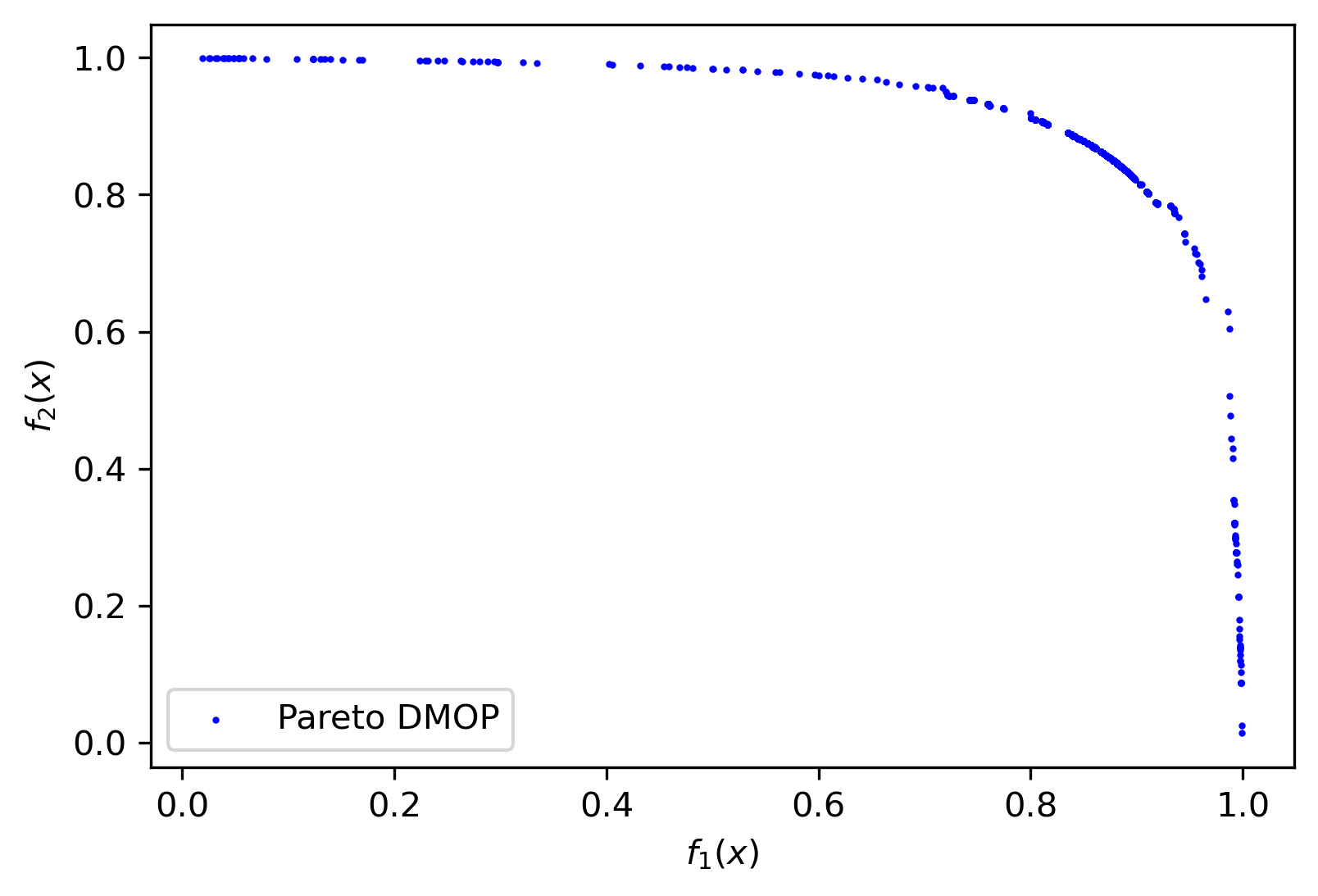} \\
\multicolumn{2}{c}{c)} \\[0.4cm]

\includegraphics[width=0.45\textwidth,keepaspectratio]{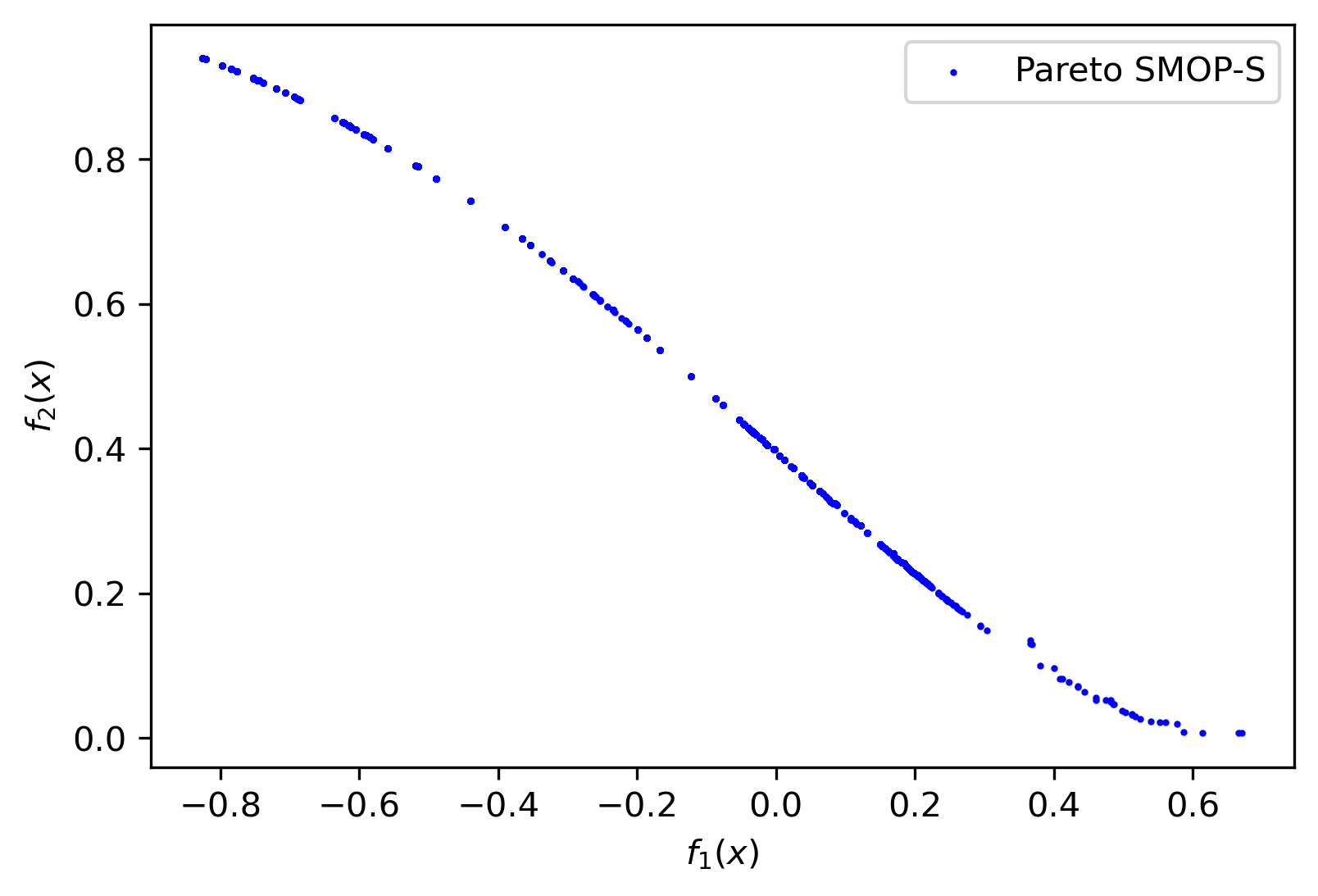} &
\includegraphics[width=0.45\textwidth,keepaspectratio]{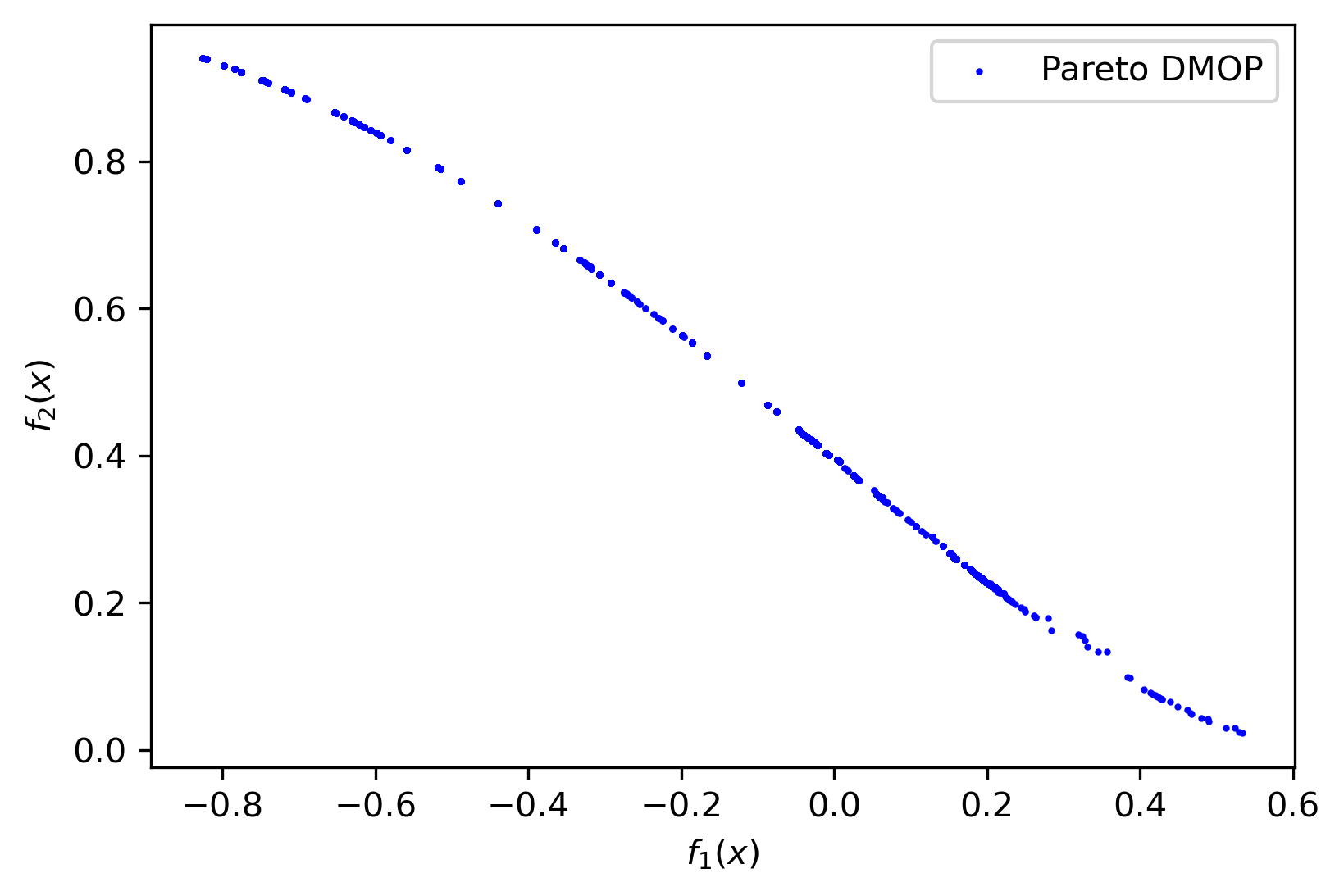} \\
\multicolumn{2}{c}{d)}

\end{tabular}

\caption{Pareto front approximations using SMOP (left) and DMOP (right) on different problems from Table \ref{tab3}: a) SP1, b) SK1, c) FF1, d) T2.}
\label{fig4}
\end{figure}

\section{Conclusion}
 {\color{red} We have proposed a trust region method for multiobjective problems based on probabilistically fully linear models for each function.  The method is designed to operate under the key assumptions that the approximation accuracy of per function models is achieved with high enough probability. The concept of Jointly Independent Fully Linear Probabilistic models is introduced to deal with the fact that the objective scalarization function is nonsmooth and hence the concept of full linearity can not be extended. However we prove that probabilistically fully linear models per function yield a satisfactory  random model for a nonsmooth scalarization function $\phi$.  The theoretical contribution of this work is the proof of almost sure convergence to a Pareto critical point. Possible  applications go beyond the scope of multi-objective optimization. We presented several numerical experiments that showcase the algorithm's efficient practical performance, especially for large scale problems and large data sets. 
 Additionally, we implemented a Pareto finding routine, and made a thorough comparison between the stochastic and deterministic approach. Future work could include the generalization of fully quadratic models or techniques such as additional sampling.  }}

\section*{Acknowledgments}
We are very grateful to the anonymous referees whose insightful comments helped us a lot to improve the paper.

 This work   was supported by the Science Fund of the Republic of Serbia, Grant no. 7359, Project LASCADO.

\end{document}